\newtheorem{theorem}{Theorem}[section]
\newtheorem{lemma}[theorem]{Lemma}
\newtheorem{corollary}[theorem]{Corollary}
\theoremstyle{definition}
\theoremstyle{remark}
\newtheorem{remark}[theorem]{Remark}
\numberwithin{equation}{section}
\begin{document}
\setcounter{page}{1}

\title[Fourier multipliers  in Triebel-Lizorkin spaces]{Fourier multipliers for  Triebel-Lizorkin spaces on compact Lie groups}

\author[D. Cardona]{Duv\'an Cardona}
\address{
  Duv\'an Cardona S\'anchez:
  \endgraf
  Department of Mathematics: Analysis, Logic and Discrete Mathematics
  \endgraf
  Ghent University, Belgium
  \endgraf
  {\it E-mail address} {\rm duvanc306@gmail.com,\, Duvan.CardonaSanchez@ugent.be }
  }

\author[M. Ruzhansky]{Michael Ruzhansky}
\address{
  Michael Ruzhansky:
  \endgraf
  Department of Mathematics: Analysis, Logic and Discrete Mathematics
  \endgraf
  Ghent University, Belgium
  \endgraf
 and
  \endgraf
  School of Mathematical Sciences
  \endgraf
  Queen Mary University of London
  \endgraf
  United Kingdom
  \endgraf
  {\it E-mail address} {\rm Michael.Ruzhansky@ugent.be}
  }

\thanks{The authors are supported by the FWO Odysseus 1 grant G.0H94.18N: Analysis and Partial Differential Equations and by the Methusalem programme of the Ghent University Special Research Fund (BOF) (Grant number 01M01021). MR is also supported in parts by the EPSRC grant
EP/R003025/2.
}

     \keywords{ Fourier multipliers, spectral multipliers, compact Lie groups, H\"ormander-Mihlin Theorem, Marcinkiewicz condition, Triebel-Lizorkin spaces}
     \subjclass[2010]{43A15, 43A22; Secondary 22E25, 43A80}

\begin{abstract} 
 We  investigate the boundedness of Fourier multipliers on a compact Lie group when acting on Triebel-Lizorkin spaces. Criteria are given in terms of the H\"ormander-Mihlin-Marcinkiewicz condition. In our analysis, we use  the difference structure of the unitary dual of a compact Lie group. Our results cover the sharp H\"ormander-Mihlin theorem on Lebesgue spaces  and also other historical results on the subject. 
\end{abstract} 

\maketitle

\tableofcontents
\allowdisplaybreaks

\section{Introduction} 

Let $G$ be a compact Lie group. In this work we  study  sufficient  conditions for the boundedness of Fourier multipliers on the Triebel Lizorkin spaces $F^{r}_{p,q}(G)$ in terms of the H\"ormander-Mihlin condition on their symbols. The  Littlewood-Paley theorem states that $L^p(G)\equiv F^{0}_{p,2}(G),$ so that in view of the classical results of H\"ormander-Mihlin type (see H\"ormander \cite{Hormander1960} and Mihlin \cite{Mihlin} for instance), Triebel-Lizorkin spaces are a good substitute of $L^p$-spaces, when considering smoothness of distributions in different scales (see Triebel \cite{Triebel1983,Triebel2006} and \cite{NurRuzTikhBesov2015,NRT} for details).  

The problem of finding conditions for the boundedness of Fourier multipliers on compact Lie groups started a long of time ago with the study of the theory of Fourier series of periodic functions on $[0,1],$ (functions on the torus $\mathbb{T}=\mathbb{R}/\mathbb{Z}$). Indeed, 
when classifying the boundedness of multipliers of  the Fourier series, 
\begin{equation}
    Af(x):=\sum_{\xi\in \mathbb{Z}}e^{2\pi i x\cdot \xi}\sigma(\xi)\widehat{f}(\xi),\quad f\in L^{1}(\mathbb{T}),\,\, \widehat{f}(\xi):=\int\limits^{1}_{0}e^{-2\pi i x\cdot \xi}f(x)dx,\quad \,
\end{equation}   it was observed by Marcinkiewicz in his classical 1939's work \cite{Marc} that the condition
\begin{equation}\label{Marc1}
\sup_{\xi\in\mathbb{Z}}|\sigma(\xi) |+ \sup_{j\in\mathbb{N}_0}\sum_{2^{j-1}\leq |\xi|<2^{j}}|\sigma(\xi+1)-\sigma(\xi)|<\infty,
\end{equation}assures the existence of a bounded extension of $A$ on $L^p(\mathbb{T}),$ $1<p<\infty.$ Denoting the difference operator on the lattice $\mathbb{Z},$ by  $\Delta \sigma:=\sigma(\cdot+1)-\sigma,$ and by $\Delta^k,$ $k\in \mathbb{N}_0,$ its successive iterations, the Marcinkiewicz condition \eqref{Marc1} is satisfied by any sequence $(\sigma(\xi))_{\xi\in \mathbb{Z}^n}$ such that
\begin{equation}\label{Mac2}
    |\Delta^k \sigma(\xi)|\lesssim_{k} |\xi|^{-k},\quad \,\xi\neq 0,\,k=0,1,
\end{equation} which should be, in principle, more easier to verify that \eqref{Marc1}. Another generalisation of  Marcinkiewicz's criterion  for  multipliers of the Fourier transform on $\mathbb{R}^n,$ was done by Mihlin \cite{Mihlin}, who stated that a function $\sigma\in C^{\infty}(\mathbb{R}^{n}\setminus \{0\}),$ satisfying estimates of the kind 
\begin{equation}
    |\partial_{\xi}^{\alpha}\sigma(\xi)|\lesssim_{\alpha} |\xi|^{-|\alpha|},\,\,\,\,|\alpha|\leq [n/2]+1,
\end{equation} has a multiplier $A$ (of the Fourier transform\footnote{ Defined for $f\in C^{\infty}_0(\mathbb{R}^n),$ by $\widehat{f}(\xi):=\int\limits_{\mathbb{R}^n}e^{-2\pi i x\cdot \xi}f(x)dx.$ } on $\mathbb{R}^n$) defined by
\begin{equation}
    Af(x)\equiv T_{\sigma}f(x):=\int\limits_{\mathbb{R}^n}e^{2\pi i x\cdot \xi}\sigma(\xi)\widehat{f}(\xi)d\xi,\,\,f\in C^{\infty}_0(\mathbb{R}^n),
\end{equation}admitting  a bounded extension on $L^p(\mathbb{R}^n),$ for $1<p<\infty.$ Subsequent generalisations to Mihlin's theorem were done by  H\"ormander \cite{Hormander1960},  Calder\'on and  Torchinsky in  \cite{CalderonTorchinsky}, Taibleson and Weiss \cite{TaiblesonWeiss}, Baernstein and Sawyer  \cite{BS}, Seeger  \cite{Seeger1,Seeger2,Seeger3} and many others.  We refer  the reader to Grafakos' paper \cite{Grafakos} (and reference therein) for a complete historical  revision and for recent developments about Milhin-H\"ormander and Marcinkiewicz multiplier theorems on $\mathbb{R}^n$.

Extensions of Marcinkiewicz, and H\"ormander-Mihlin criteria have been proved in the context on Lie groups and several spaces of homogeneous type in the context of spectral multipliers of  self-adjoint operators, e.g., sub-Laplacians,  or of other operators with heat kernels  satisfying  Gaussian estimates, with general contexts that go beyond of the objective of this paper. In view of the extensive literature on the field, we will not review it here, but we refer the reader to  \cite{alexo,Anker,CardonaDelgadoRuzhansky2019,Sikora,CowlingSikora,CW3,Stein,NormanWeiss} and to the extensive list of references therein.

In the framework of  Fourier multipliers on compact Lie groups, by using the Calder\'on-Zygmund type theory in  Coifman and De Guzm\'an \cite{CoifmandeGuzman}, the $L^p$-Fourier multipliers  for SU(2) were investigated by Coifman and Weiss in their  classical works \cite{CoifmanWeiss,CW2}. Subelliptic Spectral multipliers for  $L^p(\textnormal{SU}(2))$ were also considered in Cowling and Sikora \cite{CowlingSikora}. Later on, criteria for the  $L^p$-boundedness of Fourier multipliers for  arbitrary compact Lie groups $G$  were given  in  \cite{RuzhanskyWirth2015}, with  a  generalisation in  \cite[Section 5]{SubellipticCalculus} to $L^p$-subelliptic Fourier multipliers.

One of the notable questions when studying the qualitative properties for multipliers on Lie groups is to endow (the spaces of discrete functions on) the unitary dual with a difference structure.  So,  fixing the unitary dual $\widehat{G}$ of  an arbitrary compact Lie group $G,$ when  generalising the Marcinkiewicz condition on a Fourier multiplier $A$ on $G,$ associated to a sequence (called the symbol of $A$) $\sigma:=\{\sigma(\xi)\}_{[\xi]\in \widehat{G}},$\footnote{Here, $[\xi]$ denotes the equivalence class of a unitary, irreducible and continuous representation $\xi:G\rightarrow\textnormal{Hom}(\mathbb{C}^{d_\xi})$ on $G,$  $d_\xi$ is the dimension of its representation space, $\sigma(\xi)\in \textnormal{Hom}(\mathbb{C}^{d_\xi}),$ and $\widehat{f}(\xi):=\int_Gf(x)\xi(x)^{*}dx,$ is the Fourier transform on the group $G,$ of $f\in C^{\infty}(G)$ at $[\xi]$. For instance, in the case of the torus $G=\mathbb{T}^n$, $\xi(x):=e^{i2\pi x\cdot \xi},$ $x\in \mathbb{T}^n,$ $d_\xi\equiv 1,$ and so $\widehat{G}\cong \mathbb{Z}^n.$}
\begin{equation}
    Af(x)\equiv T_\sigma f(x):=\sum_{[\xi]\in \widehat{G}}d_{\xi}\textnormal{Tr}[\xi(x)\sigma(\xi)\widehat{f}(\xi)],\quad f\in C^{\infty}(G),\,
\end{equation}
 the following questions arise:
\begin{itemize}
    \item[(Q1):] how to define the difference operators $\Delta^{\alpha}$ on $\widehat{G},$ in such a way that they generalise the usual notion of difference operators on $\mathbb{Z}\cong \widehat{\mathbb{T}}$?
    \item[(Q2):] which is the required order for the differences operators applied to $\sigma$  in order that $A$ admits a bounded extension on $L^p(G)$?
\end{itemize}
We note that (Q1) was satisfactorily solved  in  \cite{RuzhanskyWirth2015} by introducing a family of difference operators $\Delta^{\alpha}:=\mathbb{D}_{\xi}^{\alpha},$ (defined in terms of the Fourier transform on $G,$ as it was done  in \cite{Ruz})   in terms of the unitary representations $\xi:G\rightarrow \textnormal{End}(\mathbb{C}^{\ell})$\footnote{We will always write $d_\xi:=\ell$ for the dimension of the representation space $\mathbb{C}^{\ell}.$ Also, $I_{\ell}$ is the identity matrix of size $\ell\times \ell.$ } of $G.$ About (Q2), the following Marcinkiewicz type theorem was proved in \cite{RuzhanskyWirth2015}.
\begin{theorem}\label{TRW2015ZIP'}
Let us assume that $G$ is a compact Lie group of dimension $n.$  Let  $\sigma\in \Sigma(\widehat{G})$\footnote{$\Sigma(\widehat{G})$ is the space of functions $\sigma:\widehat{G}\rightarrow \cup_{\ell\in \mathbb{N}_0}(\textnormal{End}(\mathbb{C}^{\ell})).$} be a  symbol satisfying 
\begin{equation}\label{braket}
    \Vert\mathbb{D}^{\alpha} \sigma(\xi)\Vert_{\textnormal{op}}\leqslant C_\alpha\langle \xi\rangle^{-|\alpha|},\,\,|\alpha|\leqslant \varkappa:=\left[\frac{n}{2}\right]+1.
\end{equation} Then $A\equiv T_\sigma$ is of weak type $(1,1)$  and bounded on $L^p(G)$ for all $1<p<\infty.$  Moreover,
\begin{equation}
    \Vert A \Vert_{\mathscr{B}(L^p(G))},\, \Vert A \Vert_{\mathscr{B}(L^1(G),L^{1,\infty}(G))}  \lesssim  \max\{C_{\alpha}:|\alpha|\leq \varkappa\}.
\end{equation}
\end{theorem}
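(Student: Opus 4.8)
The plan is to run the standard Calder\'on--Zygmund argument, viewing $G$ as a space of homogeneous type for the geodesic distance $|x|:=\textnormal{dist}(x,e)$ and the Haar measure. First, $A=T_\sigma$ is the right-convolution operator $T_\sigma f=f\ast k_\sigma$, where the convolution kernel $k_\sigma\in\mathcal{D}'(G)$ has Fourier coefficients $\widehat{k_\sigma}(\xi)=\sigma(\xi)$ for every $[\xi]\in\widehat{G}$. The case $|\alpha|=0$ of \eqref{braket} gives $\sup_{[\xi]\in\widehat{G}}\Vert\sigma(\xi)\Vert_{\textnormal{op}}\le C_0$, so the Plancherel theorem on $G$ makes $T_\sigma$ bounded on $L^2(G)$ with norm $\le C_0$. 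Writing $M:=\max_{|\alpha|\le\varkappa}C_\alpha$ for brevity, it then suffices to establish the H\"ormander integral regularity estimate
\begin{equation*}
\sup_{y\in G}\int_{|x|\ge 2|y|}\bigl|k_\sigma(y^{-1}x)-k_\sigma(x)\bigr|\,dx\lesssim M,
\end{equation*}
for then the Calder\'on--Zygmund theorem yields the weak $(1,1)$ bound, the Marcinkiewicz interpolation theorem gives boundedness on $L^p(G)$ for $1<p\le 2$, and the range $2\le p<\infty$ follows by duality, since $T_\sigma^{*}=T_{\sigma^{*}}$ with $\sigma^{*}(\xi)=\sigma(\xi)^{*}$ satisfies \eqref{braket} with the same constants (the class of difference operators of order $\le\varkappa$ being stable, with the same bounds, under the adjoint operation on symbols).

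To control $k_\sigma$ I would use a dyadic Littlewood--Paley decomposition in the frequency parameter $\langle\xi\rangle$: fix a partition of unity $1=\sum_{j\ge 0}\psi_j(\langle\xi\rangle)$ on $\widehat{G}$ with $\psi_j$ supported where $\langle\xi\rangle\sim 2^{j}$, put $\sigma_j(\xi):=\psi_j(\langle\xi\rangle)\sigma(\xi)$, and let $k_j$ be the (now smooth) kernel with symbol $\sigma_j$. Since multiplying a function on $G$ by a suitable smooth function $q_\beta$ with $|q_\beta(x)|\asymp|x|^{|\beta|}$ near $e$ corresponds, on the Fourier side, to applying the difference operator $\mathbb{D}^{\beta}$ (this is exactly how the $\mathbb{D}^{\alpha}=\mathbb{D}_\xi^{\alpha}$ are constructed in \cite{Ruz,RuzhanskyWirth2015}), the Leibniz rule for $\mathbb{D}^{\alpha}$, the estimate \eqref{braket}, and $\Vert\cdot\Vert_{\textnormal{HS}}^2\le d_\xi\Vert\cdot\Vert_{\textnormal{op}}^2$ give, for every $|\beta|\le\varkappa$,
\begin{equation*}
\Vert q_\beta k_j\Vert_{L^2(G)}^2=\sum_{[\xi]\in\widehat{G}}d_\xi\Vert\mathbb{D}^{\beta}\sigma_j(\xi)\Vert_{\textnormal{HS}}^2\lesssim M^{2}\!\!\sum_{\langle\xi\rangle\sim 2^{j}}\!\!d_\xi^{2}\langle\xi\rangle^{-2|\beta|}\lesssim M^{2}\,2^{j(n-2|\beta|)},
\end{equation*}
using the Weyl-type bound $\sum_{\langle\xi\rangle\le R}d_\xi^{2}\lesssim R^{n}$. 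Taking $|\beta|=\varkappa$ and applying the Cauchy--Schwarz inequality (together with $\int_{|x|\ge R}|x|^{-2\varkappa}\,dx\lesssim R^{n-2\varkappa}$, valid since $2\varkappa>n$) yields the tail estimate
\begin{equation*}
\int_{|x|\ge R}|k_j(x)|\,dx\lesssim M\,(2^{j}R)^{-(\varkappa-n/2)},\qquad R\ge 2^{-j}.
\end{equation*}
Running the same argument after expanding $k_j(y^{-1}x)-k_j(x)$ along a unit-speed geodesic from $e$ to $y$, and using that a left-invariant derivative of $k_j$ has symbol $\sigma_j(\xi)$ composed on the left with a matrix of operator norm $\lesssim\langle\xi\rangle$, produces the companion smoothness bound $\int_G|k_j(y^{-1}x)-k_j(x)|\,dx\lesssim M\min\{1,2^{j}|y|\}$.

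Finally I would insert these two estimates into the H\"ormander integral and sum over $j$: for the scales with $2^{j}\ge|y|^{-1}$ one bounds the $j$-piece by the two tails (noting $|y^{-1}x|\gtrsim|x|$ on $|x|\ge 2|y|$), obtaining a convergent geometric series $\sum_{2^{j}\ge|y|^{-1}}M(2^{j}|y|)^{-(\varkappa-n/2)}\lesssim M$, while for $2^{j}<|y|^{-1}$ the smoothness bound gives $\sum_{2^{j}<|y|^{-1}}M\,2^{j}|y|\lesssim M$; here the convergence of the first series is exactly where the sharp order $\varkappa=[n/2]+1>n/2$ is used. Summing $k_\sigma=\sum_j k_j$ then yields the required bound uniformly in $y$. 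I expect the main obstacle to be precisely these kernel estimates for the dyadic pieces: one must track carefully how the finitely many difference operators $\mathbb{D}^{\alpha}$ with $|\alpha|\le\varkappa$ pass through the cut-offs $\psi_j(\langle\xi\rangle)$ via the noncommutative Leibniz rule, how the operator norm in \eqref{braket} feeds into the Hilbert--Schmidt norm of the Plancherel identity against the dimension count $\sum_{\langle\xi\rangle\le R}d_\xi^2\lesssim R^n$, and how the resulting $L^2$-decay of $q_\beta k_j$ converts, through Cauchy--Schwarz on the appropriate ball, into the integrated $L^1$ tail and smoothness bounds --- this is the step that consumes the threshold $[n/2]+1$.
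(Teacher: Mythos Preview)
Your approach is correct and, at its core, coincides with the paper's: both routes hinge on the weighted $L^2$ estimate $\Vert q_\beta k_j\Vert_{L^2}^2\lesssim M^2\,2^{j(n-2|\beta|)}$ for the dyadic kernel pieces, obtained via Plancherel, the Leibniz rule for difference operators, and the Weyl count $\sum_{\langle\xi\rangle\le R}d_\xi^2\lesssim R^n$. This is exactly the content of Lemma~\ref{lemmaofauxiliaruse} (see~\eqref{HMconditionGcompact}). The difference is only in packaging. The paper feeds this estimate straight into the Coifman--de~Guzm\'an criterion (Theorem~\ref{CoifDeGuzCrit}, equivalently Theorem~\ref{HM}), which takes a weighted $L^2$ condition on dyadic pieces of the kernel as input and returns the weak $(1,1)$ bound as a black box. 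You instead unpack that black box: Cauchy--Schwarz converts the weighted $L^2$ bound into $L^1$ tail and smoothness estimates for each $k_j$, summing over $j$ yields the classical H\"ormander integral condition for $k_\sigma$, and standard Calder\'on--Zygmund theory on the space of homogeneous type $(G,|\cdot|,dx)$ finishes. Your route is more self-contained; the paper's is terser once Theorem~\ref{CoifDeGuzCrit} is granted. The paper does in fact record the dyadic H\"ormander integral estimate you need (equation~\eqref{3.8}), but uses it for the vector-valued Triebel--Lizorkin argument rather than for the scalar $L^p$ case.

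One technical point to watch: the products $q_\beta$ built from the matrix entries $q_{ij}(g)=\xi_0(g)_{ij}-\delta_{ij}$ satisfy $|q_\beta(x)|\asymp|x|^{|\beta|}$ only \emph{near} $e$; an individual $q_\beta$ may well vanish elsewhere on $G$, so the step $\int_{|x|\ge R}|q_\beta(x)|^{-2}\,dx\lesssim R^{n-2\varkappa}$ is not justified as written. The remedy is either to use the specific weight $\rho$ of~\eqref{therhofucntion}, which vanishes only at $e$ so that $\rho(x)^{2\varkappa}\asymp|x|^{2\varkappa}$ globally on the compact group (this is the paper's choice), or to sum over a strongly admissible family so that $\sum_{|\beta|=\varkappa}|q_\beta(x)|^2\asymp|x|^{2\varkappa}$. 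Similarly, your smoothness bound $\int_G|k_j(y^{-1}x)-k_j(x)|\,dx\lesssim M\,2^j|y|$ needs, when you ``run the same argument'' for $Xk_j$, the fact that $\Vert\mathbb{D}^\gamma\sigma_X(\xi)\Vert_{\textnormal{op}}\lesssim\langle\xi\rangle^{1-|\gamma|}$ for the symbol $\sigma_X$ of a left-invariant vector field; this holds because $X$ belongs to the H\"ormander class of order~$1$ in the calculus of~\cite{Ruz}, but it is worth stating since the Leibniz expansion of $\mathbb{D}^\beta(\sigma_X\sigma_j)$ will hit $\sigma_X$ with differences of all orders up to $\varkappa$.
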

\begin{remark}
In the symbol condition \eqref{braket}, $$\textnormal{Spect}((1+\mathcal{L}_G)^{\frac{1}{2}}):=\{\langle \xi\rangle:[\xi]\in \widehat{G}\},$$ is the system of eigenvalues of the Bessel potential operator $(1+\mathcal{L}_G)^{\frac{1}{2}}$ associated to the Laplacian on $G,$ which can be defined as follows.    Taking an arbitrary orthonormal basis $X_{\mathfrak{g}}:=\{X_{1},\cdots, X_n\}$ of the Lie algebra $\mathfrak{g}$ of $G,$ with respect to the Killing form on $\mathfrak{g},$  $\mathcal{L}_G:=\sum_{i=1}^{n}X_{i}^2.$ We refer the reader to Remark \ref{remarkD} for details about the definition of the difference operators $\mathbb{D}^{\alpha}=\mathbb{D}_1^{\alpha_1}\cdots \mathbb{D}_n^{\alpha_n} .$ They are  compositions of  differences operators  $\mathbb{D}_{j}$ of first order  associated to the  entries of the matrix-function $\xi_0(\cdot)-I_{d_{\xi_0}}$ for any choice of a unitary representation in every equivalence class $[\xi_0]\in\widehat{G}.$
\end{remark}
\begin{remark}
Theorem \ref{TRW2015ZIP'} was proved by using the H\"ormander-Mihlin  Theorem  in \cite[Page 630]{RuzhanskyWirth2015}, see also Theorem \ref{HM}. 
\end{remark}

Let us note that for graded Lie groups (e.g. the Heisenberg group, any stratified group and a wide class of nilpotent Lie groups where Rockland operators  exist, see \cite{FR2} for details) the H\"ormander-Mihlin and the Marcinkiewicz conditions for $L^p,$ Triebel-Lizorkin and Hardy spaces  have been investigated  in  \cite{CardonaRuzhanskyBesovSpaces,CR,CardonaRuzhanskyTriebelGraded,FR}.

In this work we investigate the Marcinkiewicz condition for multipliers on Triebel-Lizorkin spaces $F^{r}_{p,q}(G)$ on $G,$ extending in  Theorem \ref{HMTTL} to the case of compact Lie groups, the estimate of Seeger \cite{Seeger3}  for multipliers in Triebel-Lizorkin spaces $F^{r}_{p,q}(\mathbb{R}^n)$ on $\mathbb{R}^n.$ In order to present our main result, let us define the Triebel-Lizorkin spaces $F^{r}_{p,q}(G),$ as they were introduced by the second author, Nursultanov and Tikhonov in \cite{NRT}. So, let us fix $\eta\in C^{\infty}_0(\mathbb{R}^{+},[0,1]),$ $\eta\neq 0,$ so that $\textnormal{supp}(\eta)\subset [1/2,2],$ and such that 
\begin{equation}
    \sum_{j\in \mathbb{Z}}\eta(2^{-j}\lambda)=1,\,\,\lambda>0.
\end{equation}  Fixing $\psi_0(\lambda):=\sum_{j=-\infty}^{0} \eta_j(\lambda),$ and for $j\geq 1,$ $\psi_j(\lambda):=\eta(2^{-j}\lambda),$ we  have
\begin{equation}
    \sum_{\ell=0}^{\infty}\psi_\ell(\lambda)=1,\,\,\lambda>0,
\end{equation} and one can define the family of operators $\psi_j(\mathcal{B})$ using the functional calculus of the subelliptic Bessel potential $\mathcal{B}:=(1+\mathcal{L}_G)^{\frac{1}{2}}.$ Then, for $0<q<\infty,$ and $1<p<\infty,$ the Triebel-Lizorkin space $F^{r}_{p,q}(G)$ consists of the distributions $f\in \mathscr{D}'(G)$ such that
\begin{equation*}
 \Vert f\Vert_{F^{r}_{p,q}(G)}:=   \left\Vert\left(\sum_{\ell=0}^{\infty}2^{ \ell r q }\left|\psi_{\ell}(\mathcal{B})f\right|^{q}\right)^{\frac{1}{q}} \right\Vert_{L^p(G)}<\infty.
\end{equation*}The weak-$F^{r}_{1,q}(G)$ space  is defined by the distributions $f\in \mathscr{D}'(G)$ such that
\begin{equation}
 \Vert f \Vert_{\textrm{weak-}F^{r}_{1,q}(G)}    :=   \sup_{t>0}t\left|\left\{x\in G:\left(\sum_{\ell=0}^\infty2^{ \ell r q }|\psi_\ell(\mathcal{B}) f(x)|^q   \right)^{\frac{1}{q}}>t \right\}\right|<\infty.
\end{equation}Above, for a measurable subset $A\subset G,$ $|A|$ denotes its Haar measure.
The main results of this work are Theorem \ref{HMTTL} below and the H\"ormander-Mihlin Theorem \ref{CardonaRuzhansky}.
\begin{theorem}\label{HMTTL}
Let us assume that $G$ is a compact Lie group of dimension $n.$  Let  $\sigma\in \Sigma(\widehat{G})$ be a  symbol satisfying 
\begin{equation}\label{braket2}
    \Vert\mathbb{D}^{\alpha} \sigma(\xi)\Vert_{\textnormal{op}}\leqslant C_\alpha\langle \xi\rangle^{-|\alpha|},\,\,|\alpha|\leqslant \varkappa:=\left[\frac{n}{2}\right]+1.
\end{equation}
Then $A\equiv T_\sigma$ extends to a bounded operator from  $F^{r}_{p,q}(G)$ into $F^{r}_{p,q}(G)$  for all $1<p,q<\infty,$ and all $r\in \mathbb{R}.$ For $p=1,$ $A$ admits a bounded extension from $F^{r}_{1,q}(G)$ into $\textrm{weak-}F^{r}_{1,q}(G).$   Moreover
\begin{equation}
\Vert A \Vert_{ \mathscr{B}(F^{r}_{p,q}(G))    },\,
\Vert A \Vert_{\mathscr{B}\left(F^{r}_{1,q}(G),\,\textrm{weak-}F^{r}_{1,q}(G)\right)}    \lesssim  \max\{C_{\alpha}:|\alpha|\leq \varkappa\}.
\end{equation}
\end{theorem}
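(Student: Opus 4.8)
The plan is to deduce the Triebel-Lizorkin estimate from an $\ell^{q}$-valued Calder\'on-Zygmund estimate for $A$, using the scalar boundedness of Theorem \ref{TRW2015ZIP'} as a black box. The structural point is that the Littlewood-Paley projections $\psi_{\ell}(\mathcal{B})$ are Fourier multipliers whose symbol is $\psi_{\ell}(\langle\xi\rangle)I_{d_{\xi}}$, a scalar multiple of the identity on each block (because $\mathcal{L}_{G}$ is bi-invariant, so $\langle\xi\rangle$ is precisely the eigenvalue of $\mathcal{B}$ on the $[\xi]$-isotypic component). Consequently $\psi_{\ell}(\mathcal{B})$ commutes with $A=T_{\sigma}$ on $C^{\infty}(G)$, and, writing $F=(F_{\ell})_{\ell\geq 0}$ with $F_{\ell}:=2^{\ell r}\psi_{\ell}(\mathcal{B})f$, one has $\Vert f\Vert_{F^{r}_{p,q}(G)}=\Vert F\Vert_{L^{p}(G;\ell^{q})}$ and $\Vert Af\Vert_{F^{r}_{p,q}(G)}=\Vert\vec{A}F\Vert_{L^{p}(G;\ell^{q})}$, where $\vec{A}$ is the diagonal extension of $A$ acting coordinatewise, $(\vec{A}F)_{\ell}=AF_{\ell}$. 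The same computation identifies $\Vert f\Vert_{\textrm{weak-}F^{r}_{1,q}(G)}$ with the $L^{1,\infty}(G;\ell^{q})$ quasi-norm of $F$. Hence everything reduces to proving that $\vec{A}$ is bounded on $L^{p}(G;\ell^{q})$ for $1<p<\infty$ and maps $L^{1}(G;\ell^{q})$ into $L^{1,\infty}(G;\ell^{q})$, with norms $\lesssim\max\{C_{\alpha}:|\alpha|\leq\varkappa\}$; applying this to $f\in C^{\infty}(G)$ and using the density of $C^{\infty}(G)$ in $F^{r}_{p,q}(G)$ (valid for $1<p<\infty$, $0<q<\infty$) then yields the asserted bounded extension, while a general $r\in\mathbb{R}$ costs nothing since $r$ enters only through the weights $2^{\ell r}$ absorbed into $F_{\ell}$.

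For the vector-valued statement I would use Calder\'on-Zygmund theory with $\ell^{q}$-valued kernels. At the single exponent $p=q$ there is an isometric identification $L^{q}(G;\ell^{q})\cong\ell^{q}(L^{q}(G))$ by Fubini's theorem, on which $\vec{A}$ acts componentwise as $A$; thus $\Vert\vec{A}\Vert_{L^{q}(G;\ell^{q})\to L^{q}(G;\ell^{q})}=\Vert A\Vert_{\mathscr{B}(L^{q}(G))}\lesssim\max\{C_{\alpha}\}$ by Theorem \ref{TRW2015ZIP'}, and this is precisely where the hypothesis $1<q<\infty$ enters. Next, the proof of Theorem \ref{TRW2015ZIP'} in \cite{RuzhanskyWirth2015} shows that under \eqref{braket2} the operator $A$ is a Calder\'on-Zygmund operator on the compact space $G$, viewed as a space of homogeneous type for the geodesic distance: its right-convolution kernel is, away from the identity, a locally integrable function satisfying H\"ormander's integral regularity condition, with constant controlled by $\max\{C_{\alpha}\}$. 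Since the kernel of $\vec{A}$ acts on $\ell^{q}$ by scalar multiplication, its $\mathscr{B}(\ell^{q})$-norm equals the modulus of the scalar kernel, so $\vec{A}$ is itself an $\ell^{q}$-valued Calder\'on-Zygmund operator satisfying the same H\"ormander condition. The Benedek-Calder\'on-Panzone theorem on vector-valued singular integrals then propagates the $L^{q}(G;\ell^{q})$ bound to boundedness on $L^{p}(G;\ell^{q})$ for all $1<p<\infty$ and to the weak type $(1,1)$ inequality $L^{1}(G;\ell^{q})\to L^{1,\infty}(G;\ell^{q})$, with constants depending only on $p,q,n$ and $\max\{C_{\alpha}\}$. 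Combined with the first paragraph, this proves the theorem.

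The step I expect to require the most care is verifying that the kernel information behind Theorem \ref{TRW2015ZIP'} is genuinely a H\"ormander-type regularity estimate with constant $\lesssim\max\{C_{\alpha}\}$, as this is what makes the vector-valued machinery applicable with the claimed quantitative control; the remainder is the soft reduction above. If one prefers not to invoke Benedek-Calder\'on-Panzone, one may instead run the Calder\'on-Zygmund decomposition directly in the $\ell^{q}$-valued setting, estimating the good part by the $L^{q}(G;\ell^{q})$ bound and the bad part by the kernel regularity; this merely repeats the scalar argument of \cite{RuzhanskyWirth2015} with $|\cdot|$ replaced by $\Vert\cdot\Vert_{\ell^{q}}$ and raises no new difficulty.
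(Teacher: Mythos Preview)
Your argument is correct and takes a genuinely different, more streamlined route than the paper. The paper does not exploit the commutation of $A$ with $\psi_{\ell}(\mathcal{B})$; instead it introduces the vector-valued operator $W=\{W_{\ell}\}$ with $W_{\ell}=A\psi_{\ell}(\mathcal{B})$, so that each coordinate carries a \emph{different} convolution kernel $\kappa_{\ell}$. It then proves (Lemma~\ref{lemmaofauxiliaruse}) the summable H\"ormander regularity $\sum_{\ell\geq 0}\int_{|x|>4c|z|}|\kappa_{\ell}(z^{-1}x)-\kappa_{\ell}(x)|\,dx\lesssim\Vert\sigma\Vert'_{l.u.,L^{2}_{s}(\widehat{G})}$, runs the Calder\'on--Zygmund decomposition by hand in $L^{q}(G;\ell^{q})$ and in $L^{1}(G;\ell^{q})$, and finally uses the overlap identity $\psi_{\ell}=\psi_{\ell}(\psi_{\ell-1}+\psi_{\ell}+\psi_{\ell+1})$ to pass from bounds on $W$ back to the $F^{r}_{p,q}$ norm. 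Your reduction via commutativity collapses all of this: the $\ell^{q}$-valued kernel of $\vec{A}$ is the scalar kernel of $A$ times the identity, so the Benedek--Calder\'on--Panzone hypothesis is literally the scalar H\"ormander condition already implicit in Theorem~\ref{TRW2015ZIP'} (and, if one wants it explicitly, obtained by summing \eqref{3.8} over $\ell$). What the paper's longer approach buys is that it simultaneously yields the more general H\"ormander--Mihlin Theorem~\ref{CardonaRuzhansky} (hypothesis \eqref{toverifyHM}), from which Theorem~\ref{HMTTL} is then read off via Lemma~\ref{lemmaofauxiliaruse}; your argument recovers Theorem~\ref{CardonaRuzhansky} too, provided one checks that \eqref{toverifyHM} implies the scalar kernel regularity, which is exactly the content cited from \cite{FischerDiff} in Lemma~\ref{lemmaofauxiliaruse}. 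The point you flagged as delicate---that the kernel condition comes with constant $\lesssim\max\{C_{\alpha}\}$---is indeed the one place where real input is needed, and the paper's Lemma~\ref{lemmaofauxiliaruse} supplies precisely that estimate.
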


\begin{remark}
 The Marcinkiewicz Theorem \ref{HMTTL} will be deduced, from the H\"ormander-Mihlin Theorem \ref{CardonaRuzhansky} for Triebel-Lizorkin spaces. In particular, in view of the Littlewood-Paley theorem (see Furioli, Melzi and Veneruso \cite{furioli}), we have $L^p(G)=F^{0}_{p,2}(G)$ for $1<p<\infty,$ so that we recover the $L^p$-bound in Theorem \ref{TRW2015ZIP'}. Because we are not assuming that the Fourier multipliers are defined by the spectral calculus, Theorem \ref{CardonaRuzhansky}  extends the main theorem in Weiss \cite{NormanWeiss} and also the historical 1939's result due to Marcinkiwicz \cite{Marc} (see Remark \ref{finalreamrk} and Corollary \ref{HMTTL22}).  As before, we refer the reader to Remark \ref{remarkD} for details about the definition of the difference operators $\mathbb{D}^{\alpha}=\mathbb{D}_1^{\alpha_1}\cdots \mathbb{D}_n^{\alpha_n} .$ 

\end{remark}

\section{Preliminaries}\label{Preliminaries}

\subsection{The unitary dual  and the Fourier transform}  First, let us record the notion of the unitary dual $\widehat{G}$ of a compact Lie group $G.$  So,  
let us assume that $\xi$ is a continuous, unitary and irreducible  representation of $G,$ this means that,
\begin{itemize}
    \item $\xi\in \textnormal{Hom}(G, \textnormal{U}(H_{\xi})),$ for some finite-dimensional vector space $H_\xi\cong \mathbb{C}^{d_\xi},$ i.e. $\xi(xy)=\xi(x)\xi(y)$ and for the  adjoint of $\xi(x),$ $\xi(x)^*=\xi(x^{-1}),$ for every $x,y\in G.$
    \item The map $(x,v)\mapsto \xi(x)v, $ from $G\times H_\xi$ into $H_\xi$ is continuous.
    \item For every $x\in G,$ and $W_\xi\subset H_\xi,$ if $\xi(x)W_{\xi}\subset W_{\xi},$ then $W_\xi=H_\xi$ or $W_\xi=\emptyset.$
\end{itemize} Let $\textnormal{Rep}(G)$ be the set of unitary, continuous and irreducible representations of $G.$ The relation, {\small{
\begin{equation*}
    \xi_1\sim \xi_2\textnormal{ if and only if, there exists } A\in \textnormal{End}(H_{\xi_1},H_{\xi_2}),\textnormal{ such that }A\xi_{1}(x)A^{-1}=\xi_2(x), 
\end{equation*}}}for every $x\in G,$ is an equivalence relation and the unitary dual of $G,$ denoted by $\widehat{G}$ is defined via
$$
    \widehat{G}:={\textnormal{Rep}(G)}/{\sim}.
$$By a suitable changes of basis, we always can assume that every $\xi$ is matrix-valued and that $H_{\xi}=\mathbb{C}^{d_\xi}.$ If a representation $\xi$ is unitary, then $$\xi(G):=  \{\xi(x):x\in G \}$$ is a subgroup (of the group of  matrices $\mathbb{C}^{d_\xi\times d_\xi}$) which is isomorphic to the original group $G$. Thus the homomorphism $\xi$ allows us to represent the compact Lie group $G$ as a group of matrices. This is the motivation for the term `representation'.
Here, as usually,  
\begin{equation*}
    \widehat{f}(\xi)\equiv (\mathscr{F}f)(\xi):=\int\limits_{G}f(x)\xi(x)^*dx\in  \mathbb{C}^{d_\xi\times d_\xi},\,\,\,[\xi]\in \widehat{G},
\end{equation*}is the matrix-valued Fourier transform of $f\in C^{\infty}_0(G)$ at $\xi=(\xi_{ij})_{i,j=1}^{d_\xi}.$

\subsection{Difference operators} Difference operators on compact Lie groups were introduced in \cite{Ruz} to endow the unitary dual of a compact Lie group with a difference structure. In terms of them, H\"ormander classes of pseudo-differential operators on a compact Lie group can be characterised, see \cite{RuzTurIMRN,RuzhanskyTurunenWirth2014,RuzhanskyWirth2014}. Same as in \cite{RuzhanskyWirth2015}, where differences operators were used to study $L^p$-multipliers we will extend that analysis to the case of Triebel-Lizorkin spaces (see \cite{NurRuzTikhBesov2015,NRT} for instance).

We will denote by  $\Sigma(\widehat{G}) $ the space of  matrix-valued functions,
 \begin{equation*}
    \Sigma(\widehat{G}):=\{ \sigma\in \mathscr{F}(\mathscr{D}'(G))=:\mathscr{D}'(\widehat{G}) \,|\,\sigma:  \widehat{G}\rightarrow \cup_{[\xi]\in \widehat{G}}\mathbb{C}^{d_\xi\times d_\xi}\}.
\end{equation*}By following  \cite{RuzhanskyWirth2015},   a difference operator $Q_\xi$ of order $k,$ can be applied to a symbol $\sigma=\widehat{f}\in \mathscr{D}'(\widehat{G}), $ via
\begin{equation}\label{taylordifferences}
    Q_\xi\sigma(\xi)=\widehat{qf}(\xi),\,[\xi]\in \widehat{G}, 
\end{equation} where $Q_\xi$ is associated with a smooth function $q$ vanishing of order $k$ at the identity $e=e_G.$ We will denote by $\textnormal{diff}^k(\widehat{G})$  the set of all difference operators of order $k.$ For a  fixed smooth function $q,$ the associated difference operator will be denoted by $\Delta_q:=Q_\xi.$ We will choose an admissible collection of difference operators (see e.g. \cite{RuzhanskyWirth2015}),
\begin{equation*}
  \Delta_{\xi}^\alpha:=\Delta_{q_{(1)}}^{\alpha_1}\cdots   \Delta_{q_{(i)}}^{\alpha_{i}},\,\,\alpha=(\alpha_j)_{1\leqslant j\leqslant i}, 
\end{equation*}
where
\begin{equation*}
    \textnormal{rank}\{\nabla q_{(j)}(e):1\leqslant j\leqslant i \}=\textnormal{dim}(G), \textnormal{   and   }\Delta_{q_{(j)}}\in \textnormal{diff}^{1}(\widehat{G}).
\end{equation*}We say that this admissible collection is strongly admissible if 
\begin{equation*}
    \bigcap_{j=1}^{i}\{x\in G: q_{(j)}(x)=0\}=\{e_G\}.
\end{equation*}

\begin{remark}\label{remarkD} A special type of difference operators can be defined by using the unitary representations  of $G.$ Indeed, if $\xi_{0}$ is a fixed irreducible and unitary  representation of $G$, consider the matrix
\begin{equation}
 \xi_{0}(g)-I_{d_{\xi_{0}}}=[\xi_{0}(g)_{ij}-\delta_{ij}]_{i,j=1}^{d_\xi},\, \quad g\in G.   
\end{equation}Then, we associated  to the function 
$
    q_{ij}(g):=\xi_{0}(g)_{ij}-\delta_{ij},\quad g\in G,
$ a difference operator  via
\begin{equation}
    \mathbb{D}_{\xi_0,i,j}:=\mathscr{F}(\xi_{0}(g)_{ij}-\delta_{ij})\mathscr{F}^{-1}.
\end{equation}
If the representation is fixed we omit the index $\xi_0$ so that, from a sequence $\mathbb{D}_1=\mathbb{D}_{\xi_0,j_1,i_1},\cdots, \mathbb{D}_n=\mathbb{D}_{\xi_0,j_n,i_n}$ of operators of this type we define $\mathbb{D}^{\alpha}=\mathbb{D}_{1}^{\alpha_1}\cdots \mathbb{D}^{\alpha_n}_n$, where $\alpha\in\mathbb{N}^n$.
\end{remark}
\begin{remark}[Leibniz rule for difference operators]\label{Leibnizrule} The difference structure on the unitary dual $\widehat{G},$ induced by the difference operators acting on the momentum variable $[\xi]\in \widehat{G},$  implies the following Leibniz rule 
\begin{align*}
    \Delta_{q_\ell}(a_{1}a_{2})(x_0,\xi) =\sum_{ |\gamma|,|\varepsilon|\leqslant \ell\leqslant |\gamma|+|\varepsilon| }C_{\varepsilon,\gamma}(\Delta_{q_\gamma}a_{1})(x_0,\xi) (\Delta_{q_\varepsilon}a_{2})(x_0,\xi), \quad (x_{0},[\xi])\in G\times \widehat{G},
\end{align*} for $a_{1},a_{2}\in C^{\infty}(G, \mathscr{S}'(\widehat{G})).$ For details we refer the reader to  \cite{Ruz}.
\end{remark}
\begin{remark}[Difference operators of fractional order and Sobolev spaces on the unitary dual] In the spirit of the Sobolev spaces on the unitary dual of a graded Lie group \cite{FischerRuzhansky2017}, Sobolev spaces also can be defined for the unitary dual of a compact Lie group. They can be defined as follows:
Let $\Delta_{q_{1}},$ be a difference operator of first order associated to a smooth and non-negative function $q_1\geq 0.$ For $s\in \mathbb{R},$ the Sobolev space $\dot{L}^{2}_s(\widehat{G})$ consists of all distributions $\sigma=\widehat{f}\in \mathscr{D}'(G)$ such that
 \begin{equation}
      \Vert\sigma \Vert_{\dot{L}^{2}_s(\widehat{G})}:=\Vert f\Vert_{L^2(G,{q_{1}^{2s}})}=\Vert {q_{1}^{s}} f\Vert_{L^2(G)}<\infty.
 \end{equation}For $s\in \mathbb{N},$ observe that,
 \begin{equation}
     \Vert\sigma \Vert_{\dot{L}^{2}_s(\widehat{G})}\asymp\max_{|\alpha|=s}\Vert\Delta_{\xi}^{\alpha} \sigma\Vert_{L^2(\widehat{G})}\asymp \max_{|\alpha|=s}\Vert\mathbb{D}_{\xi}^{\alpha} \sigma\Vert_{L^2(\widehat{G})}.
 \end{equation}
 So, for every $s\in \mathbb{R},$ the difference operator $ \Delta_{q_{1}}^{s}:=\Delta_{q_{1}^{s}}$ of fractional order $s,$ can be defined in terms of the Fourier transform, via:
 \begin{equation}\label{eqref}
     \Delta_{q_{1}}^{s}\widehat{f}=\widehat{ q_{1}^{s} f},\quad \,f\in \mathscr{D}'(G).
 \end{equation} So, we have $\Vert\sigma \Vert_{\dot{L}^{2}_s(\widehat{G})}:=\Vert\Delta_{q_{1}}^{s}\widehat{f}\Vert_{L^2(\widehat{G})}.$ We will denote
 \begin{equation}
     \textnormal{diff}^{\,s}(\widehat{G}):=\{\Delta_{q_{1}^{s}}: \Delta_{q_{1}}\in  \textnormal{diff}^{1}(\widehat{G}) \}.
 \end{equation}
\end{remark}

\subsection{Calder\'on-Zygmund type estimates for multipliers}
In order to provide $L^p$-estimates for multipliers in the subelliptic context, we will use the techniques developed by the second author and J. Wirth in \cite{RuzhanskyWirth2015}, where a special case (compatible with the notion of difference operators and the difference structure that they provide for the unitary dual) of a statement of Coifman and de Guzm\'an (\cite{CoifmandeGuzman}, Theorem 2) was established.    We record it as follows  (see   \cite[p. 630]{RuzhanskyWirth2015}).  
    \begin{theorem}[H\"ormander-Mihlin  Theorem  for $L^p(G)$]\label{CoifDeGuzCrit} Assume that $A:L^2(G)\rightarrow L^2(G)$ is a left-invariant operator on $G$ satisfying 
 \begin{equation}\label{CoifDeGuz}
  \Vert A\psi_{r^{-1}}\Vert_{L^2(G,\rho(x)^{n(1+\varepsilon)}dx )}  := \left(\,\int\limits_{G}|A\psi_{{r^{-1}}}(x)|^2\rho(x)^{n(1+\varepsilon)}dx\right)^\frac{1}{2}\lesssim  Cr^{-\frac{\varepsilon}{2}},
 \end{equation}for some  $\varepsilon>0,$ uniformly in $r>0.$ Then $A$ is of weak type $(1,1)$ and bounded on $L^p(G),$ for all $1<p<\infty.$
 \end{theorem}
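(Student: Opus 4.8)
The plan is to read condition \eqref{CoifDeGuz} as a quantitative Calder\'on--Zygmund (H\"ormander) estimate on the convolution kernel of $A$, and then to conclude by the Calder\'on--Zygmund theory on spaces of homogeneous type. I would begin with two structural observations. Since $A$ is left-invariant and bounded on $L^2(G)$, it is a right-convolution operator: there is a distribution $K_A\in\mathscr{D}'(G)$ with $Af=f*K_A$, so that for sufficiently regular $f$ one has $Af(x)=\int_G f(y)\,K_A(y^{-1}x)\,dy$; thus the associated Schwartz kernel is $\widetilde{K}(x,y)=K_A(y^{-1}x)$. Secondly, $G$ with a bi-invariant Riemannian distance $\rho(\cdot,\cdot)$ (so that $\rho(x):=\rho(e_G,x)$ satisfies $\rho(x^{-1})=\rho(x)$) and the Haar measure is a compact space of homogeneous type: $|B(x,t)|\asymp t^{n}$ for $0<t\lesssim 1$ and $|B(x,t)|=|G|$ otherwise, so that $\int_{\rho(x)>t}\rho(x)^{-n(1+\varepsilon)}\,dx\lesssim t^{-n\varepsilon}$ for $0<t\lesssim 1$. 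By the Coifman--Weiss Calder\'on--Zygmund theorem on such spaces, it then suffices to combine the given $L^2$ bound with the H\"ormander integral conditions
\[
\sup_{h\in G}\int_{\rho(x)>2\rho(h)}\bigl|K_A(x)-K_A(h^{-1}x)\bigr|\,dx<\infty,\qquad \sup_{h\in G}\int_{\rho(x)>2\rho(h)}\bigl|K_A(x)-K_A(xh)\bigr|\,dx<\infty,
\]
which, after an obvious change of variables, are exactly the left and right translation forms of the classical cancellation condition for $\widetilde K$ and for the kernel of the adjoint.

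The core of the argument is to extract these conditions from \eqref{CoifDeGuz} by a dyadic decomposition in the scale parameter. I would write $K_A=\sum_{j}K_A^{(2^{-j})}$ with $K_A^{(r)}:=A\psi_{r^{-1}}$ the piece of $K_A$ localised at spatial scale $r$ (the localisation being the one built into the definition of $\psi_{r^{-1}}$); on a compact group only scales $r\lesssim 1$ occur, so there is no contribution ``at infinity.'' Fixing $h$ and setting $2^{-k}\asymp\rho(h)$, I would treat two ranges of scales differently. For the fine pieces $r<\rho(h)$ one uses only $|K_A^{(r)}(x)-K_A^{(r)}(h^{-1}x)|\le|K_A^{(r)}(x)|+|K_A^{(r)}(h^{-1}x)|$, integrates over $\rho(x)>2\rho(h)$, and bounds each term by Cauchy--Schwarz against the weight $\rho^{n(1+\varepsilon)}$, using \eqref{CoifDeGuz} together with the volume estimate above (the translated term handled by invariance of Haar measure and $\rho(xh^{-1})\asymp\rho(x)$ on the domain of integration). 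For the coarse pieces $r\ge\rho(h)$ one instead uses a first-order mean-value estimate $|K_A^{(r)}(x)-K_A^{(r)}(h^{-1}x)|\lesssim\rho(h)\sup_{z}|\nabla K_A^{(r)}(z)|$ along the short geodesic joining $x$ and $h^{-1}x$, the needed regularity bound on $K_A^{(r)}$ coming from \eqref{CoifDeGuz} applied with a derivative of the bump $\psi$ (equivalently, a Bernstein inequality at scale $r$), and then integrates. In each range the resulting sum over $r=2^{-j}$ is geometric in $2^{j}$ with ratio a negative power of $2$, hence summable, with bound uniform in $h$; the right-translation condition follows by the same computation with $h^{-1}x$ replaced by $xh$. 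The Coifman--Weiss theorem then gives the weak $(1,1)$ bound; Marcinkiewicz interpolation with the $L^2$ bound yields $L^p(G)$ boundedness for $1<p\le2$, and duality (the adjoint $A^{*}$ is left-invariant, $L^2$-bounded, with kernel $K_A^{*}(x)=\overline{K_A(x^{-1})}$ satisfying an estimate of the same type) extends it to $2\le p<\infty$.

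The main obstacle is precisely this passage from \eqref{CoifDeGuz} to the H\"ormander conditions: one must organise the dyadic sum so that both the fine and the coarse ranges converge, which forces one to balance the $r^{-\varepsilon/2}$ factor furnished by the hypothesis against the volume growth $\rho(h)^{-n\varepsilon/2}$, and to supplement \eqref{CoifDeGuz} by a scale-$r$ regularity (Bernstein) estimate for the coarse pieces. The non-commutativity of $G$ enters only technically --- keeping left and right translations straight, using $\rho(x^{-1})=\rho(x)$ and the bi-invariance of $\rho$, and noting that the mean-value bounds and the volume comparison are insensitive to the side on which one translates.
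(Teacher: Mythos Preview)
The paper does not prove this theorem: it is recorded from \cite[p.~630]{RuzhanskyWirth2015}, where it is established as a special case of Theorem~2 in Coifman--de~Guzm\'an \cite{CoifmandeGuzman}. So there is no in-paper proof to compare against; one can only compare with the cited sources.

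Your overall strategy --- derive the H\"ormander integral condition on the kernel via a dyadic splitting, then invoke Calder\'on--Zygmund theory on a space of homogeneous type --- is natural, and your fine-scale argument (triangle inequality, Cauchy--Schwarz against the weight $\rho^{n(1+\varepsilon)}$, volume estimate $\int_{\rho>t}\rho^{-n(1+\varepsilon)}\lesssim t^{-n\varepsilon}$) is correct. Two points require attention.

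First, a misidentification: the weight $\rho$ here is \emph{not} a bi-invariant Riemannian distance but the specific function $\rho(x)^2=\dim G-\mathrm{Tr}(\mathrm{Ad}(x))$ of \eqref{therhofucntion}. It is comparable to the Riemannian distance near $e_G$, which is what you actually use, but you should not invoke metric properties of $\rho$ without that comparison.

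Second, and more substantive, your coarse-scale step is not justified by the hypothesis. You propose to bound $|K_A^{(r)}(x)-K_A^{(r)}(h^{-1}x)|$ via a mean-value estimate, with the gradient bound ``coming from \eqref{CoifDeGuz} applied with a derivative of the bump $\psi$ (equivalently, a Bernstein inequality at scale $r$).'' But \eqref{CoifDeGuz} is assumed only for the \emph{fixed} family $\psi_r$, not for $X\psi_r$; and since the $\psi_r=\phi_r-\phi_{r/2}$ are constructed as \emph{spatial} bumps $\phi_r(g)=c_r\tilde\phi(r^{-1/n}\rho(g))$ (see the paragraph after \eqref{therhofucntion}) rather than spectral projections, $A\psi_{r^{-1}}$ carries no a priori frequency localisation and Bernstein does not apply. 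So as written you cannot extract the regularity you need for the coarse pieces from the stated hypothesis alone. The route in \cite{CoifmandeGuzman,RuzhanskyWirth2015} avoids this obstacle: it runs the Calder\'on--Zygmund decomposition directly and exploits, in addition to \eqref{CoifDeGuz}, the structural properties of the approximate identity $\{\phi_r\}$ listed after \eqref{continuos resolution} (normalisation, $L^2$-scaling $\int\phi_r^2=O(1/r)$, and commutativity $\phi_r*\phi_s=\phi_s*\phi_r$), rather than first reducing to the pointwise H\"ormander condition.
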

 The family $\{\psi_r\}_{r>0}$  that appears in  Theorem  \ref{CoifDeGuzCrit} is defined by  \begin{equation}\label{continuos resolution}
 \psi_{r}:=\phi_{r}-\phi_{r/2},
 \end{equation}
 where the functions in the net $\{\phi_r\}_{r>0},$ satisfy, among other things,  the following properties (see   \cite[Lemma 3.3]{RuzhanskyWirth2015}):
 \begin{itemize}
     \item $\int\limits_{G}\phi_{r}(x)dx=1,$ 
     \item $\int\limits_{G}\phi_{r}^2(x)dx=O(\frac{1}{r}),$
     \item $\phi_r\ast\phi_s=\phi_s\ast \phi_r,$ $r,s>0.$
 \end{itemize}
  The function $\rho:x\mapsto\rho(x),$  appearing in \eqref{CoifDeGuz}, is a suitable pseudo-distance defined on $G.$ If $G$ is semi-simple (this means that the centre of $G,$ $Z(G)$ is trivial), it is defined by
  \begin{equation}\label{therhofucntion}
      \rho(x)^2:=\dim(G)-\textnormal{\textbf{Tr}}(\textnormal{Ad}(x))=\sum_{\xi\in \Delta_0}(d_\xi-\textnormal{\textbf{Tr}}(\xi(x))),\,\,x\in G,
  \end{equation}
 where $\textnormal{Ad}:G\rightarrow\textnormal{U}(\mathfrak{g}),$ and $\Delta_0$ is the system of positive roots. It can be decomposed into irreducible representations as,
 \begin{equation*}
     \textnormal{Ad}=[\textnormal{rank}(G)e_{\widehat{G}}]\oplus\left( \bigoplus_{\xi\in \Delta_{0}}\xi\right),
 \end{equation*}
 where $e_{\widehat{G}}$ is the trivial representation.
With the consideration on the centre $Z(G)=\{e_{G}\},$ it can be shown (see Lemma 3.1 of \cite{RuzhanskyWirth2015}) that 
\begin{itemize}
    \item $\rho^2(x)\geqslant   0$ and $\rho(x)=0$ if and only if $x=e_{G}.$
    \item $\Delta_{\rho^2}\in \textnormal{diff}^2(\widehat{G}).$
\end{itemize}
 If $G$ is not semi-simple,  we refer the reader to \cite[Remark 3.2]{RuzhanskyWirth2015} for the modifications in the definition of $\rho,$ in this particular case. The construction of the functions $\phi_r,$ is as follows. By choosing $\tilde{\phi}\in C^{\infty}_0(\mathbb{R})$ such that $\tilde{\phi}\geq 0,$ $\tilde{\phi}(0)\equiv 1,$ and $\partial_{t}^{\ell}\tilde{\phi}(0)=0,$ for $\ell \geq 1,$ we define
 \begin{equation}
     \psi_{r}(g):=c_{r}\tilde{\psi}(r^{-\frac{1}{n}}\rho(g)),\,\,\quad \int\limits_{G}\psi_{r}(g)dg=1,
 \end{equation}with the normalisation condition used to define $c_{r}.$ An equivalent statement to the H\"ormander-Mihlin  Theorem  \ref{CoifDeGuzCrit} will be given in terms of the family of functions
$
     \varphi_{r^{-1}}\equiv \psi_{r^{-n}},\,r>0,
 $ (see Theorem \ref{HM}) that provide a continuous dyadic decomposition.

\subsection{$L^p(G)$-boundedness of Fourier multipliers}\label{Lpcompact1} In this subsection we recall the H\"ormander-Mihlin theorem and the Marcinkiewicz-Mihlin theorem for $L^p$-multipliers on compact Lie groups  \cite{RuzhanskyWirth2015}.

\begin{remark}Let $A$ be a Fourier multiplier with symbol $\sigma.$
 Let us observe that that   Theorem  \ref{CoifDeGuzCrit} can be re-written in terms of the Sobolev spaces $\dot{L}^{2}_s(\widehat{G})$ on the unitary dual, showing that,  \eqref{CoifDeGuz} is indeed, an analogue on compact Lie groups of the H\"ormander-Mihlin theorem in \cite{Hormander1960}.  So, with the notation in  Theorem  \ref{CoifDeGuzCrit}, and making use of the Plancherel theorem, we have 
 \begin{align*}
   \Vert A\psi_{r}\Vert_{L^2(G,\rho(x)^{n(1+\varepsilon)dx})}^2
   &:= \,\int\limits_{G}|A\psi_{r}(x)|^2\rho(x)^{n(1+\varepsilon)}dx
   \\
   &=\Vert\Delta_{\rho(x)}^{{n(1+\varepsilon)}/{2}} \sigma(\xi)\widehat{\psi}_r(\xi)\Vert^2_{L^{2}(\widehat{G})}\\
   &:=\Vert \sigma(\xi)\widehat{\psi}_r(\xi)  \Vert^2_{L^{2}_{s}(\widehat{G})},
 \end{align*}
where $s:={n(1+\varepsilon)}/{2}>n/2.$ Observing that $\frac{\varepsilon}{2}=\frac{1}{n}(s-\frac{n}{2}),$ and that  $A:L^{2}(G)\rightarrow L^2(G)$ is  bounded if and only if, $  \Vert\sigma\Vert_{L^{\infty}(\widehat{G})}:=\sup_{[\xi]\in \widehat{G}}\Vert \sigma(\xi)\Vert_{\textnormal{op}}<\infty,$    Theorem  \ref{CoifDeGuzCrit} is equivalent to the following theorem. We set $$\varphi_{r^{-1}}\equiv \psi_{r^{-n}}:=\phi_{r^{-n}}-\phi_{r^{-n}/2},\,\quad r>0,$$  for the continuous dyadic approximation of the identity in \eqref{continuos resolution}.
 \end{remark}
 \begin{theorem}[H\"ormander-Mihlin Theorem for $L^p(G)$]\label{HM} Let $G$ be a compact Lie group   and let $s>\frac{n}{2}$. Let $\sigma\in \Sigma(\widehat{G})$ be a symbol satisfying
\begin{equation}\label{RuzhanskyWirth2015hypo}
  \Vert\sigma \Vert_{l.u.,\, L_{s}^2(\widehat{G})}:=  \Vert\sigma\Vert_{L^{\infty}(\widehat{G})}+ \sup_{r>0}r^{(s-\frac{n}{2})}\Vert \sigma\cdot \widehat{\varphi}_{r^{-1}}\Vert_{\dot{L}^2_s(\widehat{G})} <\infty.
\end{equation} Then $A\equiv T_\sigma$ is of weak type $(1,1)$  and bounded on $L^p(G)$ for all $1<p<\infty.$
  
 \end{theorem}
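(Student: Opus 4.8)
The plan is to recognise that, once the dyadic family in \eqref{RuzhanskyWirth2015hypo} is normalised by the convention $\varphi_{r^{-1}}\equiv\psi_{r^{-n}}$ recorded just above the statement, hypothesis \eqref{RuzhanskyWirth2015hypo} is merely a Fourier-side reformulation of the Coifman--de Guzm\'an condition \eqref{CoifDeGuz}, so that Theorem \ref{HM} follows directly from Theorem \ref{CoifDeGuzCrit}. Two preliminary observations make the reduction legitimate: the operator $A\equiv T_\sigma$ is left-invariant, hence an admissible input for Theorem \ref{CoifDeGuzCrit}; and, by the Plancherel theorem on $G$, the first summand $\Vert\sigma\Vert_{L^{\infty}(\widehat{G})}<\infty$ of \eqref{RuzhanskyWirth2015hypo} is equivalent to the boundedness of $A$ on $L^2(G)$, which is the standing assumption of Theorem \ref{CoifDeGuzCrit}.

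First I would transfer the Sobolev norm on the dual into a weighted $L^2$-norm of the kernel. Fix $r>0$ and set $h_r:=A\varphi_{r^{-1}}$, so that $\widehat{h_r}(\xi)=\sigma(\xi)\widehat{\varphi}_{r^{-1}}(\xi)$. Since $\rho$ is a non-negative function generating a first-order difference operator on $\widehat{G}$ (recall from Lemma 3.1 of \cite{RuzhanskyWirth2015} that $\rho^2\in\textnormal{diff}^2(\widehat{G})$ and that $\rho$ vanishes to first order at $e_G$), the fractional difference operator $\Delta_\rho^s=\Delta_{\rho^s}$ acts on the Fourier side by multiplication of the kernel by $\rho^s$, cf. \eqref{eqref}; combined with the Plancherel theorem this gives
\[
\Vert\sigma\cdot\widehat{\varphi}_{r^{-1}}\Vert_{\dot{L}^2_s(\widehat{G})}=\Vert\Delta_\rho^s\widehat{h_r}\Vert_{L^2(\widehat{G})}=\Vert\rho^s h_r\Vert_{L^2(G)}=\Vert A\varphi_{r^{-1}}\Vert_{L^2(G,\,\rho(x)^{2s}dx)}.
\]
Writing $s=\tfrac{n(1+\varepsilon)}{2}$, equivalently $\varepsilon=\tfrac{2}{n}\bigl(s-\tfrac n2\bigr)>0$, the right-hand side becomes $\Vert A\varphi_{r^{-1}}\Vert_{L^2(G,\,\rho(x)^{n(1+\varepsilon)}dx)}$.

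Then I would perform the dyadic rescaling: because $\varphi_{r^{-1}}=\psi_{r^{-n}}$, the substitution $t:=r^{n}$ identifies $\varphi_{r^{-1}}$ with $\psi_{t^{-1}}$ and yields $r^{s-n/2}=t^{(s-n/2)/n}=t^{\varepsilon/2}$, whence
\[
\sup_{r>0}r^{\,s-\frac n2}\Vert\sigma\cdot\widehat{\varphi}_{r^{-1}}\Vert_{\dot{L}^2_s(\widehat{G})}=\sup_{t>0}t^{\frac\varepsilon2}\Vert A\psi_{t^{-1}}\Vert_{L^2(G,\,\rho(x)^{n(1+\varepsilon)}dx)}.
\]
Thus the finiteness of the left-hand side of \eqref{RuzhanskyWirth2015hypo} is precisely the estimate \eqref{CoifDeGuz}, with implicit constant $C\lesssim\Vert\sigma\Vert_{l.u.,\,L^2_s(\widehat{G})}$. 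Invoking Theorem \ref{CoifDeGuzCrit} we conclude that $A$ is of weak type $(1,1)$ and bounded on $L^p(G)$ for all $1<p<\infty$, which is the assertion.

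The one point that needs genuine care is the validity of the weighted Plancherel identity for non-integer $s$: one must check that $\rho$ — or, when $G$ is not semi-simple, its modification from \cite[Remark 3.2]{RuzhanskyWirth2015} — is indeed a non-negative function on $G$, continuous everywhere, smooth away from $e_G$ and vanishing to first order there, so that the fractional difference operator $\Delta_\rho^s$ is well defined by \eqref{eqref} and the identity $\Vert\widehat{f}\Vert_{\dot{L}^2_s(\widehat{G})}=\Vert\rho^s f\Vert_{L^2(G)}$ persists for every real $s$, not merely for $s\in\mathbb{N}$. Granting this (it is built into the construction of $\rho$ and of the fractional differences recalled above), the remaining steps are only the bookkeeping of the scaling constants and no further obstacle arises.
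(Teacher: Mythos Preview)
Your proposal is correct and follows essentially the same route as the paper: the paper presents Theorem~\ref{HM} as an equivalent reformulation of Theorem~\ref{CoifDeGuzCrit}, obtained via the Plancherel identity (transforming the weighted $L^2$-norm $\Vert A\psi_r\Vert_{L^2(G,\rho^{n(1+\varepsilon)}dx)}$ into the Sobolev norm $\Vert\sigma\widehat{\psi}_r\Vert_{\dot L^2_s(\widehat G)}$ with $s=\tfrac{n(1+\varepsilon)}{2}$) together with the rescaling $\varphi_{r^{-1}}=\psi_{r^{-n}}$, which is exactly what you do. Your additional remark on the well-definedness of $\Delta_\rho^s$ for non-integer $s$ is a legitimate point of care, but it is handled by the definition~\eqref{eqref} and does not alter the argument.
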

We present Theorem \ref{TRW2015ZIP'} in the following form.

\begin{theorem}[Marcinkiewicz-Mihlin Theorem]\label{TRW2015ZIP}
Let $G$ be a compact Lie group   and let $\varkappa\in 2\mathbb{N}$ be such that $\varkappa>\frac{n}{2}$. Let $\sigma\in \Sigma(\widehat{G})$ be a symbol satisfying
\begin{equation*}
    \Vert\mathbb{D}^{\alpha} \sigma(\xi)\Vert_{\textnormal{op}}\leqslant C_\alpha\langle \xi\rangle^{-|\alpha|},\,\,|\alpha|\leqslant \varkappa.
\end{equation*} Then $A\equiv T_\sigma$ is of weak type $(1,1)$  and bounded on $L^p(G)$ for all $1<p<\infty.$
\end{theorem}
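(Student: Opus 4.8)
The plan is to deduce Theorem \ref{TRW2015ZIP} from the H\"ormander--Mihlin Theorem \ref{HM}. We take $s:=\varkappa$; since $\varkappa>\tfrac{n}{2}$ by hypothesis, it suffices to show that the pointwise difference bounds $\Vert\mathbb{D}^{\alpha}\sigma(\xi)\Vert_{\textnormal{op}}\leqslant C_\alpha\langle\xi\rangle^{-|\alpha|}$ for $|\alpha|\leqslant\varkappa$ imply the localised Sobolev condition \eqref{RuzhanskyWirth2015hypo} with this $s$, together with a bound on its left-hand side by $\max\{C_\alpha:|\alpha|\leqslant\varkappa\}$. The first summand of \eqref{RuzhanskyWirth2015hypo} is immediate: the case $\alpha=0$ gives $\Vert\sigma\Vert_{L^{\infty}(\widehat{G})}\leqslant C_0$, which in particular already yields the $L^2(G)$-boundedness of $A$.

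For the second summand, we first rewrite $\Vert\,\cdot\,\Vert_{\dot{L}^{2}_{\varkappa}(\widehat{G})}$ in terms of the operators $\mathbb{D}^{\alpha}$. Since $\varkappa\in2\mathbb{N}$, the function $\rho^{\varkappa}=(\rho^{2})^{\varkappa/2}$ is smooth on $G$ --- by \eqref{therhofucntion} it is a polynomial in the entries of the representations occurring in $\textnormal{Ad}$ --- and vanishes to order $\varkappa$ at $e_{G}$; expanding it via the Leibniz rule for difference operators (Remark \ref{Leibnizrule}) gives $\Vert\tau\Vert_{\dot{L}^{2}_{\varkappa}(\widehat{G})}\asymp\max_{|\alpha|=\varkappa}\Vert\mathbb{D}^{\alpha}\tau\Vert_{L^{2}(\widehat{G})}$ for $\tau\in\Sigma(\widehat{G})$. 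Applying the Leibniz rule again, now to $\mathbb{D}^{\alpha}(\sigma\cdot\widehat{\varphi}_{r^{-1}})$ with $|\alpha|=\varkappa$, writes it as a finite linear combination of products $(\mathbb{D}^{\beta}\sigma)(\mathbb{D}^{\gamma}\widehat{\varphi}_{r^{-1}})$ with $|\beta|,|\gamma|\leqslant|\alpha|\leqslant|\beta|+|\gamma|$; in particular $|\beta|\leqslant\varkappa$, so the hypothesis applies to each factor $\mathbb{D}^{\beta}\sigma$.

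It remains to estimate $\Vert(\mathbb{D}^{\beta}\sigma)(\mathbb{D}^{\gamma}\widehat{\varphi}_{r^{-1}})\Vert_{L^{2}(\widehat{G})}$. For $r\lesssim1$ the symbol $\widehat{\varphi}_{r^{-1}}$ is, up to a negligible error, the projection onto the finitely many $[\xi]$ with $\langle\xi\rangle\lesssim1$, so that $r^{\varkappa-n/2}\Vert\sigma\cdot\widehat{\varphi}_{r^{-1}}\Vert_{\dot{L}^{2}_{\varkappa}(\widehat{G})}\lesssim\Vert\sigma\Vert_{L^{\infty}(\widehat{G})}$ because $\varkappa>\tfrac{n}{2}$; hence we may assume $r\gtrsim1$. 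Using $\Vert\mathbb{D}^{\beta}\sigma(\xi)\Vert_{\textnormal{op}}\leqslant C_\beta\langle\xi\rangle^{-|\beta|}$, the elementary bound $\Vert MN\Vert_{\mathrm{HS}}\leqslant\Vert M\Vert_{\textnormal{op}}\Vert N\Vert_{\mathrm{HS}}$, the Plancherel identity $\Vert\mathbb{D}^{\gamma}\widehat{\varphi}_{r^{-1}}\Vert_{L^{2}(\widehat{G})}=\Vert q_{\gamma}\varphi_{r^{-1}}\Vert_{L^{2}(G)}$ (with $q_{\gamma}$ a smooth function vanishing to order $|\gamma|$ at $e_{G}$), and the standard properties of the dyadic resolution $\{\varphi_{r^{-1}}\}$ (cf. \cite[Lemma 3.3]{RuzhanskyWirth2015}) --- that $\widehat{\varphi}_{r^{-1}}$ is, up to rapidly decaying tails, concentrated in $\langle\xi\rangle\sim r$, that $\Vert\varphi_{r^{-1}}\Vert_{L^{2}(G)}^{2}=O(r^{n})$, and that $q_{\gamma}\varphi_{r^{-1}}$ gains the factor $r^{-|\gamma|}$ in $L^{2}(G)$ --- one obtains
\begin{equation*}
\Vert(\mathbb{D}^{\beta}\sigma)(\mathbb{D}^{\gamma}\widehat{\varphi}_{r^{-1}})\Vert_{L^{2}(\widehat{G})}\lesssim C_\beta\,r^{-|\beta|}\,\Vert q_{\gamma}\varphi_{r^{-1}}\Vert_{L^{2}(G)}\lesssim C_\beta\,r^{-|\beta|}\,r^{\frac{n}{2}-|\gamma|}=C_\beta\,r^{\frac{n}{2}-(|\beta|+|\gamma|)}\leqslant C_\beta\,r^{\frac{n}{2}-\varkappa},
\end{equation*}
the last inequality using $|\beta|+|\gamma|\geqslant|\alpha|=\varkappa$ and $r\geqslant1$. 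Summing the finitely many terms of the Leibniz expansion and taking the maximum over $|\alpha|=\varkappa$ gives $r^{\varkappa-n/2}\Vert\sigma\cdot\widehat{\varphi}_{r^{-1}}\Vert_{\dot{L}^{2}_{\varkappa}(\widehat{G})}\lesssim\max\{C_\alpha:|\alpha|\leqslant\varkappa\}$ uniformly in $r>0$; combined with $\Vert\sigma\Vert_{L^{\infty}(\widehat{G})}\leqslant C_0$ this establishes \eqref{RuzhanskyWirth2015hypo}, and Theorem \ref{HM} then yields the weak $(1,1)$ and $L^{p}(G)$ boundedness of $A$ with the asserted norm estimates.

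I expect the main obstacle to be the step inside the product $(\mathbb{D}^{\beta}\sigma)(\mathbb{D}^{\gamma}\widehat{\varphi}_{r^{-1}})$ where the decay $\langle\xi\rangle^{-|\beta|}$ coming from the symbol is replaced by the clean power $r^{-|\beta|}$ of the dyadic scale: this is only legitimate where $\widehat{\varphi}_{r^{-1}}$ is genuinely concentrated, so one has to control the spectral tails $\langle\xi\rangle\gg r$ (by iterating $\mathcal{L}_{G}$ and exploiting the smoothness of $\varphi_{r^{-1}}$) and $\langle\xi\rangle\ll r$ (through the vanishing-moment conditions $\partial_{t}^{\ell}\tilde\psi(0)=0$, $\ell\geqslant1$, built into the construction of $\{\phi_{r}\}$). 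The requirement $\varkappa\in2\mathbb{N}$ is precisely what makes $\rho^{\varkappa}$ a smooth (polynomial) function, so that the homogeneous Sobolev norm $\Vert\,\cdot\,\Vert_{\dot{L}^{2}_{\varkappa}(\widehat{G})}$ can be compared with $\max_{|\alpha|=\varkappa}\Vert\mathbb{D}^{\alpha}(\,\cdot\,)\Vert_{L^{2}(\widehat{G})}$ via the Leibniz rule.
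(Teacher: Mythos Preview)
Your strategy is exactly the one the paper indicates: reduce Theorem \ref{TRW2015ZIP} to the H\"ormander--Mihlin Theorem \ref{HM} by verifying \eqref{RuzhanskyWirth2015hypo}. The paper does not actually write out a proof of Theorem \ref{TRW2015ZIP}; it is quoted from \cite{RuzhanskyWirth2015}, and the Remark following it merely records that either condition \eqref{RuzhanskyWirth2015hypo} or \eqref{RuzhanskyWirth2015hypo2} suffices. The closest thing to an in-paper proof is Lemma \ref{lemmaofauxiliaruse} together with the short proof of Theorem \ref{HMTTL}, which carries out the analogous reduction for the Triebel--Lizorkin result.

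The route taken there differs from yours in one practical respect. You work with the localiser $\widehat{\varphi}_{r^{-1}}$ of \eqref{RuzhanskyWirth2015hypo}, which is only \emph{approximately} concentrated at $\langle\xi\rangle\sim r$; as you correctly flag, the substitution $\langle\xi\rangle^{-|\beta|}\rightsquigarrow r^{-|\beta|}$ then requires genuine control of the spectral tails, and this is where the real work in \cite{RuzhanskyWirth2015} sits. The paper's Lemma \ref{lemmaofauxiliaruse} instead passes through the equivalent condition \eqref{RuzhanskyWirth2015hypo2}, whose localiser $\eta(r^{-1}\langle\xi\rangle)$ has \emph{exact} spectral support in $\{r/2\leqslant\langle\xi\rangle\leqslant 2r\}$, so that step becomes trivial. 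In place of your direct Leibniz expansion and the $L^2$-bounds for $q_\gamma\varphi_{r^{-1}}$, the paper invokes a ready-made Sobolev product inequality \eqref{estimateofnormasraro} (Proposition 4.13 of \cite{FischerDiff}) together with the scaling bound $\Vert\eta(t\langle\xi\rangle)I_{d_\xi}\Vert_{\dot{L}^2_s(\widehat{G})}\lesssim_s t^{s-n/2}$ (Lemma 5.4 of \cite{FischerDiff}). Both arguments are correct and yield the same conclusion; the paper's choice simply outsources the tail analysis you identify as the main obstacle to the sharp spectral cutoff and to \cite{FischerDiff}.
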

\begin{remark}
Several properties for the Sobolev spaces $L^2_s(\widehat{G})$ on the unitary dual were established in \cite{FischerDiff}. In particular, the H\"ormander-Mihlin Theorem \ref{HM} was re-obtained, by observing that the $L^p(G)$-boundedness and the weak (1,1) type of a Fourier multiplier, also can be obtained if one verifies the following condition:
\begin{equation}\label{RuzhanskyWirth2015hypo2}
  \Vert\sigma \Vert_{l.u.,\, L_{s}^2(\widehat{G})}':=  \Vert\sigma\Vert_{L^{\infty}(\widehat{G})}+ \sup_{r>0}r^{(s-\frac{n}{2})}\Vert \sigma\cdot \eta({r^{-1}\langle \xi\rangle})\Vert_{\dot{L}^2_s(\widehat{G})} <\infty,
\end{equation} for any $\eta\neq 0,$ $\eta\in C^{\infty}_0(\mathbb{R}^{+}_0),$ and $s>n/2.$ Both H\"ormander-Mihlin conditions (\eqref{RuzhanskyWirth2015hypo} or \eqref{RuzhanskyWirth2015hypo2}) allow   us to write Theorem \ref{TRW2015ZIP} in the form of  Theorem \ref{TRW2015ZIP'}.
\end{remark}
\begin{remark}
The  boundedness of pseudo-differential operators on compact Lie groups, including  oscillating Fourier multipliers, on $L^p,$ subelliptic Sobolev and Besov spaces  can be found in \cite{CardonaRuzhanskyTriebelGraded,RuzhanskyDelgado2017} and \cite{Cardona2,Cardona3,CR}. The boundedness of Fourier multipliers on $L^p$-spaces, Triebel-Lizorkin spaces and Hardy spaces on graded Lie groups can be found in \cite{FR,CardonaRuzhanskyTriebelGraded} and \cite{HongHuRuzhansky2020}, respectively.
\end{remark}

\subsection{Triebel-Lizorkin spaces on compact Lie groups} In this section, Triebel-Lizorkin spaces  on compact Lie groups are introduced. This family of spaces was introduced  by the second author, Nursultanov and Tikhonov in \cite{NRT} by using a dyadic partition of the spectral resolution of the Bessel potential $\mathcal{B}=(1+\mathcal{L}_G)^{\frac{1}{2}}$.

As in the introduction, let us fix $\eta\in C^{\infty}_0(\mathbb{R}^{+},[0,1]),$ $\eta\neq 0,$ so that $\textnormal{supp}(\eta)\subset [1/2,2],$ and such that 
\begin{equation}
    \sum_{j\in \mathbb{Z}}\eta(2^{-j}\lambda)=1,\,\,\lambda>0.
\end{equation}  Fixing $\psi_0(\lambda):=\sum_{j=-\infty}^{0} \eta_j(\lambda),$ and for $j\geq 1,$ $\psi_j(\lambda):=\eta(2^{-j}\lambda),$ we  have
\begin{equation}
    \sum_{\ell=0}^{\infty}\psi_\ell(\lambda)=1,\,\,\lambda>0.
\end{equation} Let us  define the family of operators $\psi_j(\mathcal{B})$ using the functional calculus. Then, for $0<q<\infty,$ and $1<p<\infty,$ the Triebel-Lizorkin space $F^{r }_{p,q}(G)$ consists of the distributions $f\in \mathscr{D}'(G)$ such that
\begin{equation*}
 \Vert f\Vert_{F^{r }_{p,q}(G)}:=   \left\Vert\left(\sum_{\ell=0}^{\infty}2^{ \ell r q }\left|\psi_{\ell}(\mathcal{B})f\right|^{q}\right)^{\frac{1}{q}} \right\Vert_{L^p(G)}<\infty,
\end{equation*}and for $p=1,$ the weak-$F^{r}_{1,q}(G)$ space  is defined by the distributions $f\in \mathscr{D}'(G)$ such that
\begin{equation}
 \Vert f \Vert_{\textrm{weak-}F^{r }_{1,q}(G)}    :=   \sup_{t>0}t\left|\left\{x\in G:\left(\sum_{\ell=0}^\infty2^{ \ell r q }|\psi_\ell(\mathcal{B}) f(x)|^q   \right)^{\frac{1}{q}}>t \right\}\right|<\infty.
\end{equation}

In the following theorem we present some embedding properties for Triebel-Lizorkin spaces. For the proof of Theorem \ref{independence} and for a consistent investigation of Triebel-Lizorkin spaces on compact Lie groups, we refer the reader to \cite{NRT}.

\begin{theorem}\label{independence}
 Let $G$ be a compact  Lie group.  Then we have the following properties:
\begin{itemize}
    \item[(1)] $F^{r+\varepsilon,\mathcal{L} }_{p,q_1}(G)\hookrightarrow F^{r,\mathcal{L} }_{p,q_1}(G) \hookrightarrow F^{r,\mathcal{L} }_{p,q_2}(G) \hookrightarrow F^{r,\mathcal{L} }_{p,\infty}(G),$ $\varepsilon >0,$ $0\leq p\leq \infty,$ $0\leq q_1\leq q_2\leq \infty.$
    \item[(2)] $F^{r+\varepsilon,\mathcal{L} }_{p,q_1}(G) \hookrightarrow F^{r,\mathcal{L} }_{p,q_2}(G), $ $\varepsilon >0,$ $0\leq p\leq \infty,$ $1\leq q_2< q_1< \infty.$
    \item[(3)] $F^{r}_{p,2}(G)=L^p_r(G)$ for all $r\in \mathbb{R},$ and all $1<p<\infty,$ where $L^p_r(G)$ are the standard Sobolev spaces on $G.$
\end{itemize}
 \end{theorem}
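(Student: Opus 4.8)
The three assertions are of rather different natures, so I would treat them separately; parts (1) and (2) are elementary and follow from pointwise inequalities in the summation index together with monotonicity of the $L^p(G)$-norm, while part (3) is the substantive statement and rests on the Littlewood--Paley theorem of Furioli--Melzi--Veneruso \cite{furioli} and a lifting property for the Bessel potential $\mathcal B=(1+\mathcal L_G)^{1/2}$.

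For part (1): when $q_1\le q_2$ the embedding $\ell^{q_1}\hookrightarrow\ell^{q_2}$ gives, for every $x\in G$,
\[
\Big(\sum_{\ell\ge0}2^{\ell rq_2}|\psi_\ell(\mathcal B)f(x)|^{q_2}\Big)^{1/q_2}\le\Big(\sum_{\ell\ge0}2^{\ell rq_1}|\psi_\ell(\mathcal B)f(x)|^{q_1}\Big)^{1/q_1},
\]
(with the obvious modification, a supremum, when $q_2=\infty$), and taking $L^p(G)$-norms yields $F^{r}_{p,q_1}(G)\hookrightarrow F^{r}_{p,q_2}(G)\hookrightarrow F^{r}_{p,\infty}(G)$; similarly $2^{\ell rq_1}\le 2^{\ell(r+\varepsilon)q_1}$ for all $\ell\ge0$ (since $\varepsilon>0$) gives $F^{r+\varepsilon}_{p,q_1}(G)\hookrightarrow F^{r}_{p,q_1}(G)$. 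For part (2), with $1\le q_2<q_1<\infty$, I would split $2^{\ell rq_2}|\psi_\ell(\mathcal B)f(x)|^{q_2}=2^{-\ell\varepsilon q_2}\cdot\big(2^{\ell(r+\varepsilon)q_2}|\psi_\ell(\mathcal B)f(x)|^{q_2}\big)$ and apply H\"older's inequality in $\ell$ with exponents $q_1/q_2$ and $q_1/(q_1-q_2)$, which produces the pointwise bound
\[
\Big(\sum_{\ell\ge0}2^{\ell rq_2}|\psi_\ell(\mathcal B)f(x)|^{q_2}\Big)^{1/q_2}\le C_{\varepsilon,q_1,q_2}\Big(\sum_{\ell\ge0}2^{\ell(r+\varepsilon)q_1}|\psi_\ell(\mathcal B)f(x)|^{q_1}\Big)^{1/q_1}
\]
with $C_{\varepsilon,q_1,q_2}=\big(\sum_{\ell\ge0}2^{-\ell\varepsilon q_1q_2/(q_1-q_2)}\big)^{(q_1-q_2)/(q_1q_2)}<\infty$ since $\varepsilon>0$; taking $L^p(G)$-norms, or inserting this into the distribution function when $p=1$, gives the embedding into $F^{r}_{p,q_2}(G)$ and its weak variant.

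Part (3), the identification $F^{r}_{p,2}(G)=L^p_r(G)$, is the main point. Here $L^p_r(G)=\{f\in\mathscr D'(G):\mathcal B^{r}f\in L^p(G)\}$ with $\|f\|_{L^p_r(G)}:=\|\mathcal B^{r}f\|_{L^p(G)}$. The plan is: first, the Littlewood--Paley theorem on $G$ \cite{furioli} gives $\|g\|_{L^p(G)}\asymp\big\|\big(\sum_{\ell\ge0}|\psi_\ell(\mathcal B)g|^2\big)^{1/2}\big\|_{L^p(G)}$ for $1<p<\infty$, equivalently $F^{0}_{p,2}(G)=L^p(G)$; second, $\mathcal B^{r}$ enjoys the lifting property that it maps $F^{r}_{p,2}(G)$ isomorphically onto $F^{0}_{p,2}(G)$, which I would reduce to showing that $2^{-\ell r}\mathcal B^{r}\psi_\ell(\mathcal B)$ and its inverse are $L^p(G)$-Fourier multipliers with norms uniform in $\ell$ (their spectral symbols $m_\ell(\lambda)=2^{-\ell r}\lambda^{r}\psi_\ell(\lambda)$ are supported in $\lambda\asymp 2^\ell$ and satisfy Mihlin--H\"ormander bounds uniformly in $\ell$, so Theorem \ref{HM} applies, the low-frequency term $\ell=0$ being treated directly), and then upgrading these scalar bounds to the square-function estimate via the vector-valued form of the Littlewood--Paley theorem; combining the two steps, $\|f\|_{L^p_r(G)}=\|\mathcal B^{r}f\|_{L^p(G)}\asymp\|\mathcal B^{r}f\|_{F^{0}_{p,2}(G)}\asymp\|f\|_{F^{r}_{p,2}(G)}$. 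The main obstacle is precisely the lifting step: establishing the Mihlin--H\"ormander (or Marcinkiewicz) bounds for the truncated powers $m_\ell$ uniformly in $\ell$ and passing from the scalar $L^p$ estimates to the vector-valued square-function inequality --- this, along with the conventions needed to make sense of the spaces in the full range $0\le p\le\infty$ (for $p\le1$ and $p=\infty$ one uses Hardy/BMO-type definitions), is carried out in detail in \cite{NRT}.
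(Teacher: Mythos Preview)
Your proposal is correct, and in fact it supplies considerably more than the paper does: the paper does not prove Theorem~\ref{independence} at all but simply refers the reader to \cite{NRT} for the full argument, adding only the remark that item (3) is a consequence of the Littlewood--Paley theorem of Furioli, Melzi and Veneruso \cite{furioli}. Your elementary pointwise arguments for (1) and (2) via the $\ell^{q_1}\hookrightarrow\ell^{q_2}$ embedding and H\"older's inequality are the standard ones (and are precisely what is carried out in \cite{NRT}), and your strategy for (3)---Littlewood--Paley to identify $F^{0}_{p,2}(G)=L^p(G)$, followed by the lifting property of $\mathcal{B}^{r}$ reduced to uniform Mihlin bounds for the dyadically localised symbols $m_\ell(\lambda)=2^{-\ell r}\lambda^{r}\psi_\ell(\lambda)$---is exactly the route taken there and the one the paper gestures at in its remark. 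The only point worth tightening is the passage from the scalar $L^p$-bounds on the $m_\ell$ to the square-function estimate: rather than invoking a ``vector-valued form of the Littlewood--Paley theorem'', the cleanest device is to observe that for any choice of signs $\varepsilon_\ell=\pm1$ the symbol $\sum_\ell \varepsilon_\ell m_\ell(\lambda)$ still satisfies the Mihlin--H\"ormander bounds uniformly (the $\psi_\ell$ have finite overlap), so Theorem~\ref{HM} applies uniformly in $(\varepsilon_\ell)$, and Khintchine's inequality then yields the square-function equivalence; this is routine, and you rightly defer the details to \cite{NRT}.
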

\begin{remark}
Let us observe that (3) in Theorem \ref{independence} is a consequence of the Littlewood-Paley theorem. For details we refer the reader to Furioli, Melzi and Veneruso \cite{furioli}.
\end{remark}

\section{H\"ormander-Mihlin multipliers on Triebel-Lizorkin spaces: Proof of Theorem \ref{HMTTL}}

Let us fix $\eta\in C^{\infty}_0(\mathbb{R}^{+},[0,1]),$ $\eta\neq 0,$ so that $\textnormal{supp}(\eta)\subset [1/2,2],$ and such that 
\begin{equation}
    \sum_{j\in \mathbb{Z}}\eta(2^{-j}\lambda)=1,\,\,\lambda>0.
\end{equation}
By defining $\psi_0(\lambda):=\sum_{j=-\infty}^{0} \eta_j(\lambda),$ and for $j\geq 1,$ $\psi_j(\lambda):=\eta(2^{-j}\lambda),$ we obviously have
\begin{equation}
    \sum_{\ell=0}^{\infty}\psi_\ell(\lambda)=1,\,\,\lambda>0.
\end{equation}

First, let us deduce the proof of Theorem \ref{HMTTL} from the following result. The distribution  $\vartheta_{t}:=\eta(t\mathcal{B})\delta$ denotes the right-convolution kernel of the operator $\eta(t\mathcal{B}).$ 
\begin{theorem}[H\"ormander-Mihlin  Theorem  for $F^{r}_{p,q}(G)$]\label{CardonaRuzhansky}  Let us consider a Fourier multiplier $A:C^{\infty}(G)\rightarrow L^2(G)$ satisfying the symbol condition
\begin{equation}\label{toverifyHM}
  \Vert\sigma \Vert_{l.u.,\, L_{s}^2(\widehat{G})}':=  \Vert\sigma\Vert_{L^{\infty}(\widehat{G})}+ \sup_{r>0}r^{(s-\frac{n}{2})}\Vert \sigma\cdot \eta({r^{-1}\langle \xi\rangle})\Vert_{\dot{L}^2_s(\widehat{G})} <\infty,
\end{equation} with $s>\frac{n}{2}.$
 Then $A\equiv T_\sigma$ extends to a bounded operator from  $F^{r}_{p,q}(G)$ into $F^{r}_{p,q}(G)$  for all $1<p,q<\infty,$ and all $r\in \mathbb{R}.$ For $p=1,$ $A$ admits a bounded extension from $F^{r}_{1,q}(G)$ into $\textrm{weak-}F^{r}_{1,q}(G).$   Moreover
\begin{equation}\label{equationnorms}
\Vert A \Vert_{ \mathscr{B}(F^{r}_{p,q}(G))    },\,
\Vert A \Vert_{\mathscr{B}\left(F^{r}_{1,q}(G),\,\textrm{weak-}F^{r}_{1,q}(G)\right)}    \lesssim  \Vert\sigma \Vert_{l.u.,\, L_{s}^2(\widehat{G})}' .
\end{equation}
 \end{theorem}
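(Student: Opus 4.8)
The plan is to reduce the theorem to a vector-valued Calder\'on--Zygmund estimate for $A$, and then to establish that estimate by running the Coifman--de Guzm\'an machinery behind Theorem \ref{CoifDeGuzCrit} with $\ell^q$-valued functions.

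First I would use that $A=T_\sigma$ is left-invariant and that each Littlewood--Paley piece $\psi_\ell(\mathcal{B})$ is a spectral multiplier of $\mathcal{B}$, hence has a \emph{scalar} symbol $\psi_\ell(\langle\xi\rangle)I_{d_\xi}$; consequently $A$ and $\psi_\ell(\mathcal{B})$ commute. Setting $f_\ell:=2^{\ell r}\psi_\ell(\mathcal{B})f$, this gives at once
\[
\Vert Af\Vert_{F^{r}_{p,q}(G)}=\left\Vert\left(\sum_{\ell=0}^{\infty}|Af_\ell|^{q}\right)^{\frac1q}\right\Vert_{L^p(G)},\qquad
\Vert f\Vert_{F^{r}_{p,q}(G)}=\left\Vert\left(\sum_{\ell=0}^{\infty}|f_\ell|^{q}\right)^{\frac1q}\right\Vert_{L^p(G)},
\]
together with the analogous identities with $L^{1,\infty}(G)$ replacing $L^p(G)$ on the left when $p=1$. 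Hence it suffices to prove that $A\otimes\mathrm{Id}_{\ell^q}$ is bounded on $L^p(G;\ell^q)$ for $1<p,q<\infty$ and maps $L^1(G;\ell^q)$ into $L^{1,\infty}(G;\ell^q)$, with norm controlled by $\Vert\sigma \Vert_{l.u.,\, L_{s}^2(\widehat{G})}'$; the weight $2^{\ell r}$ is irrelevant here, since $A$ is linear and independent of $\ell$, so it is enough to treat $r=0$.

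Next I would collect the Calder\'on--Zygmund ingredients. The hypothesis \eqref{toverifyHM} contains $\Vert\sigma\Vert_{L^{\infty}(\widehat{G})}<\infty$, so by Plancherel $A$ is bounded on $L^2(G)$, and by Theorem \ref{HM} it is bounded on $L^t(G)$ for every $1<t<\infty$ and of weak type $(1,1)$, with bounds $\lesssim\Vert\sigma \Vert_{l.u.,\, L_{s}^2(\widehat{G})}'$. Fixing $q\in(1,\infty)$ and using Tonelli's theorem, $L^q(G;\ell^q)\cong\ell^q(L^q(G))$ and $A\otimes\mathrm{Id}_{\ell^q}$ acts componentwise, so it is bounded on $L^q(G;\ell^q)$. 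Moreover $A$ has a right-convolution kernel $K_A=A\delta$, and the equivalence of \eqref{toverifyHM} with the estimate \eqref{CoifDeGuz} of Theorem \ref{CoifDeGuzCrit}, together with the argument establishing that theorem in \cite[p.~630]{RuzhanskyWirth2015}, shows that $K_A$ satisfies the H\"ormander integral regularity condition on the space of homogeneous type $(G,\rho,dx)$. Since the convolution kernel of $A\otimes\mathrm{Id}_{\ell^q}$ is $x\mapsto K_A(x)\,\mathrm{Id}_{\ell^q}$, whose $\ell^q\to\ell^q$ operator norm equals $|K_A(x)|$, it satisfies the same H\"ormander condition with the same constant. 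I would then invoke the Benedek--Calder\'on--Panzone theorem on vector-valued singular integrals, in its form valid on spaces of homogeneous type (see \cite{CoifmandeGuzman} and \cite[p.~630]{RuzhanskyWirth2015}): an operator bounded on $L^{p_0}(G;\ell^q)$ for some $p_0\in(1,\infty)$ whose kernel obeys the H\"ormander condition extends to a bounded operator on $L^p(G;\ell^q)$ for all $1<p<\infty$ and from $L^1(G;\ell^q)$ into $L^{1,\infty}(G;\ell^q)$. Applying this with $p_0=q$ and combining with the reduction above yields $\Vert Af\Vert_{F^{r}_{p,q}(G)}\lesssim\Vert f\Vert_{F^{r}_{p,q}(G)}$ and $\Vert Af\Vert_{\textrm{weak-}F^{r}_{1,q}(G)}\lesssim\Vert f\Vert_{F^{r}_{1,q}(G)}$, with constants bounded by $\Vert\sigma \Vert_{l.u.,\, L_{s}^2(\widehat{G})}'$, which is \eqref{equationnorms}.

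The crux is the passage to $\ell^q$ with $q\neq2$: since $\ell^q$ is not a Hilbert space one cannot merely tensor the $L^2$-boundedness of $A$, so one must extract the H\"ormander regularity of the scalar kernel $K_A$ directly from the frequency-side condition \eqref{toverifyHM} and then run a vector-valued Calder\'on--Zygmund decomposition on the non-abelian (but doubling) group $G$. If one prefers not to use Benedek--Calder\'on--Panzone as a black box, the alternative is to repeat the Coifman--de Guzm\'an proof of Theorem \ref{CoifDeGuzCrit} with $\ell^q$-norms in place of absolute values throughout: the only estimates used there are the $L^q(G;\ell^q)$-bound and the weighted $L^2$-bounds \eqref{CoifDeGuz} on the pieces $A\psi_{r^{-1}}$, and both survive verbatim because those pieces act on $\ell^q$ as scalar multiples of the identity. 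A minor technical point is that $A$ is a priori only a map $C^{\infty}(G)\to L^2(G)$, so one must check the density of $C^{\infty}(G)$ in the relevant spaces and the a priori finiteness of the quantities involved; this is routine given the $L^2$-bound.
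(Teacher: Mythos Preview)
Your approach is correct and takes a genuinely different, more economical route than the paper. The decisive observation you make---that $A$ commutes with each $\psi_\ell(\mathcal{B})$ because the latter has scalar symbol $\psi_\ell(\langle\xi\rangle)I_{d_\xi}$---reduces the problem to bounding the \emph{diagonal} operator $A\otimes\mathrm{Id}_{\ell^q}$ on $L^p(G;\ell^q)$, for which the scalar H\"ormander kernel condition on $K_A$ together with a Benedek--Calder\'on--Panzone argument (or, as you note, its direct verification via an $\ell^q$-valued Calder\'on--Zygmund decomposition) suffices. The paper never exploits this commutation: it works instead with the \emph{non}-diagonal operator $W=(W_\ell)_\ell$, $W_\ell=A\psi_\ell(\mathcal{B})$, whose convolution kernels $\kappa_\ell$ genuinely depend on $\ell$, and recovers $\Vert Af\Vert_{F^0_{p,q}}$ from the boundedness of $W$ via the trick $\psi_\ell=\psi_\ell(\psi_{\ell-1}+\psi_\ell+\psi_{\ell+1})$. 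This choice forces the paper, in the weak-type estimate (Lemma~\ref{Lemma2}), to control $\sum_\ell\int_{|x|>c|z|}|\kappa_\ell(z^{-1}x)-\kappa_\ell(x)|\,dx$ and hence to rely on the summable dyadic kernel bound \eqref{3.8} from \cite{FischerDiff}; it also proves Lemma~\ref{Lemma1} by first establishing a \emph{uniform} scalar weak $(1,1)$ bound for each $W_\ell$ separately. Your route sidesteps both steps: a single H\"ormander integral bound for $K_A$---implicit in the proof of the scalar weak $(1,1)$ result behind Theorems~\ref{CoifDeGuzCrit} and~\ref{HM}---is all that is needed. What the paper's approach buys is a template that would still apply if the components $W_\ell$ were not all a fixed multiplier followed by a spectral projection, and it makes the dyadic regularity \eqref{3.8} explicit; what your approach buys is a cleaner separation into ``scalar Calder\'on--Zygmund theory for $A$'' plus ``trivial tensorisation with $\ell^q$'', yielding a substantially shorter proof. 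One small point to tighten: the references you cite for the H\"ormander condition on $K_A$ and for Benedek--Calder\'on--Panzone on a compact Lie group give the \emph{conclusions} rather than the kernel inequality or the vector-valued machinery as stated, so in a final write-up you should either extract the kernel bound explicitly (e.g.\ by summing the dyadic estimates for $\kappa_\ell$) or carry out the $\ell^q$-valued Calder\'on--Zygmund argument you sketch in your last paragraph.
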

\begin{remark} Note that the Fourier transform of the distribution  $\vartheta_{t}:=\eta(t\mathcal{B})\delta$ is given by 
\begin{equation}\label{nut:transform}
\widehat{\vartheta}_{t}(\xi):=\eta(t\langle \xi\rangle)I_{d_\xi}, \,\,[\xi]\in \widehat{G},    
\end{equation} and that $A:L^2(G)\rightarrow L^2(G)$ is bounded  if and only if $\Vert\sigma\Vert_{L^\infty(\widehat{G})}<\infty.$    Let $\Delta_{q_1^s}\in \textnormal{diff}^{\,s}(\widehat{G})$ be a fractional difference operator of order $s:=\frac{n+\varepsilon}{2},$ with $\varepsilon>0.$ If $A:L^2(G)\rightarrow L^2(G)$ is bounded and it satisfies the (Coifman-Weiss type)  condition \begin{equation}\label{CardonaRuzhansky2}
  \Vert A\vartheta_{r^{-1}}\Vert_{L^2(G,q_{1}(x)^{\varepsilon+n}dx )}  := \left(\,\int\limits_{G}| A\vartheta_{r^{-1}}(x)|^2q_{1}(x)^{\varepsilon+n}dx\right)^\frac{1}{2}\lesssim_{\varepsilon} Cr^{-\frac{1}{2}\varepsilon},
 \end{equation} uniformly in $r>0,$
the Plancherel Theorem makes   the hypothesis in Theorem \ref{CardonaRuzhansky}  equivalent to \eqref{CardonaRuzhansky2}. 
 Moreover, \eqref{equationnorms} is equivalent to
\begin{equation}
 \Vert A \Vert_{ \mathscr{B}(F^{r}_{p,q}(G))    },\,
\Vert A \Vert_{\mathscr{B}\left(F^{r}_{1,q}(G),\,\textrm{weak-}F^{r}_{1,q}(G)\right)}    \lesssim \Vert \sigma\Vert_{L^{\infty}(\widehat{G})}+\sup_{r>0}r^{s-\frac{n}{2}} \Vert A\vartheta_{r^{-1}}\Vert_{L^2(G,q_{1}(x)^{2s}dx )}, 
\end{equation}in view of \eqref{nut:transform}.
\end{remark}
Four our further analysis we will use the following auxiliary estimate.
\begin{lemma}\label{lemmaofauxiliaruse}
Let $A$ be a Fourier multiplier satisfying the hypothesis in Theorem \ref{HMTTL}. Then, the following estimate holds,
\begin{equation}\label{HMconditionGcompact}
\Vert \sigma(\xi)\psi_{\ell}(\langle \xi\rangle)\Vert_{\dot{L}^s_2(\widehat{G})}\lesssim_{s,\psi} 2^{-\ell(s-\frac{n}{2})}\sup_{|\alpha|\leq s}\sup_{[\xi]\in \widehat{G}}\Vert(\Delta^{\alpha}_\xi\sigma(\xi))\langle\xi\rangle^{|\alpha|} \Vert_{\textnormal{op}},\,\,\,\ell\in \mathbb{Z},
\end{equation}provided that $s>n/2,$ $s\in \mathbb{N}.$ Moreover, the right-convolution kernel $\kappa_{\ell}$ of $A\psi_{\ell},$ satisfies the uniform estimate in $\ell\in \mathbb{Z},$
\begin{equation}\label{3.8}
    \int\limits_{ |x|>4c|z|}|\kappa_{\ell}( z^{-1}x)  - \kappa_{\ell}(x)|dx\lesssim 2^{-\ell(s-\frac{n}{2})}\Vert\sigma \Vert_{l.u.,\, L_{s}^2(\widehat{G})}'.
\end{equation}
\end{lemma}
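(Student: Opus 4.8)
The plan is to prove Lemma \ref{lemmaofauxiliaruse} in two stages, corresponding to its two displayed conclusions. For the first estimate \eqref{HMconditionGcompact}, I would expand the Sobolev norm $\Vert \sigma(\xi)\psi_{\ell}(\langle\xi\rangle)\Vert_{\dot L^2_s(\widehat G)}$ using the characterisation $\Vert\tau\Vert_{\dot L^2_s(\widehat G)}\asymp\max_{|\alpha|=s}\Vert\mathbb D^{\alpha}_\xi\tau\Vert_{L^2(\widehat G)}$ recorded in the excerpt (for $s\in\mathbb N$), apply the Leibniz rule of Remark \ref{Leibnizrule} to the product $\sigma(\xi)\cdot\psi_\ell(\langle\xi\rangle)I_{d_\xi}$, and split the resulting sum into terms $(\Delta^{\beta}_\xi\sigma)(\xi)\,(\Delta^{\gamma}_\xi\psi_\ell(\langle\xi\rangle))$. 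The factor $\Delta^{\beta}_\xi\sigma$ is controlled by hypothesis \eqref{braket2} by $\langle\xi\rangle^{-|\beta|}$ times $\sup_{|\alpha|\le s}\sup_\xi\Vert(\Delta^\alpha_\xi\sigma)\langle\xi\rangle^{|\alpha|}\Vert_{\mathrm{op}}$; the factor $\Delta^{\gamma}_\xi\psi_\ell(\langle\xi\rangle)$ is supported (in $[\xi]$) on the dyadic shell $\langle\xi\rangle\sim 2^{\ell}$, where $\langle\xi\rangle^{-|\beta|}\sim 2^{-\ell|\beta|}$ and where, since $\psi_\ell=\eta(2^{-\ell}\cdot)$ is a rescaled fixed bump, difference operators applied to it behave like derivatives and gain a factor $2^{-\ell|\gamma|}$. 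Combining $|\beta|+|\gamma|\ge s$ on the shell gives an overall $2^{-\ell s}$, and the remaining sum over the shell $\langle\xi\rangle\sim 2^\ell$ of $d_\xi^2$ contributes $2^{\ell n}$ by the Weyl-type counting (Plancherel on the shell), producing $2^{-\ell s}2^{\ell n/2}=2^{-\ell(s-n/2)}$ after taking the square root. I would phrase the shell bound via the Plancherel identity $\Vert\mathbb D^{\alpha}(\cdot)\Vert_{L^2(\widehat G)}=\Vert q_\alpha(x)\,(\cdot)^{\vee}(x)\Vert_{L^2(G)}$ and the fact that the convolution kernel of $\psi_\ell(\mathcal B)$ concentrates at scale $2^{-\ell}$ near $e_G$.

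For the second estimate \eqref{3.8}, the standard route is a Calderón–Zygmund kernel (Hörmander integral) bound, obtained from the weighted $L^2$ estimate of the kernel plus its "derivative" via Cauchy–Schwarz. Writing $\kappa_\ell$ for the right-convolution kernel of $A\psi_\ell(\mathcal B)$, one has $\widehat{\kappa_\ell}(\xi)=\sigma(\xi)\psi_\ell(\langle\xi\rangle)$, so by Plancherel and \eqref{HMconditionGcompact}, $\Vert q_1(x)^{s}\kappa_\ell(x)\Vert_{L^2(G)}=\Vert\Delta^s_{q_1}\widehat{\kappa_\ell}\Vert_{L^2(\widehat G)}\lesssim 2^{-\ell(s-n/2)}\Vert\sigma\Vert'_{l.u.,L^2_s(\widehat G)}$ for $s$ the admissible half-integer $>n/2$ (and likewise, trivially, $\Vert\kappa_\ell\Vert_{L^2(G)}\lesssim\Vert\sigma\Vert_{L^\infty}\Vert\psi_\ell\Vert_{L^2}$-type bounds). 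Here $q_1$ is comparable to the distance $\rho$ to $e_G$. One then estimates $\int_{|x|>4c|z|}|\kappa_\ell(z^{-1}x)-\kappa_\ell(x)|\,dx$ by writing the difference as an integral of a derivative along a path from $x$ to $z^{-1}x$ (length $\sim|z|$), dominating by $|z|\sup|X\kappa_\ell|$ evaluated near $x$, then applying Cauchy–Schwarz against the weight $|x|^{-n(1+\varepsilon)}$ which is integrable on $|x|>4c|z|$; this is exactly the computation by which Theorem \ref{CoifDeGuzCrit}/Theorem \ref{HM} was shown equivalent to the Hörmander condition in \cite{RuzhanskyWirth2015}. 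The cancellation/scaling is arranged so that the worst case $|z|\sim 2^{-\ell}$ balances against the $2^{-\ell(s-n/2)}$ gain and the resulting geometric series in the remaining dyadic annuli $|x|\sim 2^{-\ell+k}$, $k\ge 0$, sums to a constant times $2^{-\ell(s-n/2)}\Vert\sigma\Vert'_{l.u.,L^2_s(\widehat G)}$.

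I would organise this second part as: (i) reduce to summing over dyadic annuli $A_k=\{2^{k}c|z|<|x|\le 2^{k+1}c|z|\}$, $k\ge2$; (ii) on each annulus use Cauchy–Schwarz, $\int_{A_k}|\kappa_\ell(z^{-1}x)-\kappa_\ell(x)|\,dx\le |A_k|^{1/2}\big(\int_{A_k}|\kappa_\ell(z^{-1}x)-\kappa_\ell(x)|^2\,dx\big)^{1/2}$, with $|A_k|^{1/2}\sim(2^k|z|)^{n/2}$; (iii) bound the $L^2$ mass of the difference on $A_k$ by $(2^k|z|)^{-s}$ times the weighted norm $\Vert q_1^{s}\,(\text{difference})\Vert_{L^2}$, using left-invariance to move the translation onto the weight at cost a bounded factor when $|z|\lesssim 2^k|z|$, and then invoke the weighted bound $\lesssim 2^{-\ell(s-n/2)}\Vert\sigma\Vert'$; (iv) also invoke a vanishing-moment/Taylor gain of an extra power of $|z|/(2^k|z|)$ from the difference structure (using $\widehat{\vartheta}_t=\eta(t\langle\xi\rangle)I$ has no mass at $\langle\xi\rangle=0$), so that the sum over $k$ converges; (v) sum the geometric series and take the supremum in $z$. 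The main obstacle I anticipate is step (iii)–(iv): making precise, on a compact Lie group, the interplay between the group translation $x\mapsto z^{-1}x$, the pseudo-distance weight $q_1(x)\asymp\rho(x)$, and the fractional difference operator $\Delta_{q_1}^{s}$ — i.e. justifying that translating the kernel costs only a bounded multiple of the weighted norm on the region $|x|\gtrsim|z|$, and that one genuinely gains the cancellation power needed for convergence rather than merely $O(1)$ per annulus. This is precisely the Coifman–de Guzmán / Ruzhansky–Wirth mechanism (\cite{CoifmandeGuzman}, \cite[p.~630]{RuzhanskyWirth2015}), so I would follow their argument closely, with $\vartheta_{r^{-1}}$ here playing the role that $\psi_{r^{-1}}$ plays there, and the dyadic localisation replacing the continuous parameter $r$ by $r=2^\ell$.
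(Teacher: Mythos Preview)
Your approach for the first estimate \eqref{HMconditionGcompact} is essentially the same as the paper's: both are Leibniz-rule arguments splitting the differences between $\sigma$ and $\psi_\ell(\langle\xi\rangle)$. The paper, however, does not carry this out by hand. It invokes two ready-made results from \cite{FischerDiff}: a product inequality (their Proposition 4.13)
\[
\Vert \sigma\tau\Vert_{\dot L^2_s(\widehat G)}\lesssim_s\sum_{s_1+s_2=s}\Vert\sigma\Vert_{\dot L^\infty_{s_1}(\widehat G)}\Vert\tau\Vert_{\dot L^2_{s_2}(\widehat G)},
\]
applied with $\tau=\psi_\ell(\langle\xi\rangle)I_{d_\xi}$, and a scaling estimate (their Lemma 5.4) giving directly $\Vert\psi_\ell(\langle\xi\rangle)I_{d_\xi}\Vert_{\dot L^2_{s_2}}\lesssim 2^{-\ell(s-n/2)}\Vert\psi\Vert_{H^{s'}}$. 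The paper then simply bounds $\Vert\Delta^\alpha_\xi\sigma\Vert_{L^\infty}\le\Vert\Delta^\alpha_\xi\sigma\,\langle\xi\rangle^{|\alpha|}\Vert_{L^\infty}$ without ever localising to the dyadic shell; all of the $2^{-\ell(s-n/2)}$ gain is extracted from the $\psi_\ell$ factor in one stroke. Your version distributes the gain between the two factors via the support condition, which is fine but requires you to justify the claim ``$\Delta^\gamma_\xi\psi_\ell(\langle\xi\rangle)$ gains $2^{-\ell|\gamma|}$'' --- that is precisely what Lemma 5.4 of \cite{FischerDiff} packages, and it is not as immediate from ``kernel concentration at scale $2^{-\ell}$'' as your sketch suggests.

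For the second estimate \eqref{3.8} the paper gives no argument at all: it states that \eqref{3.8} follows from \eqref{HMconditionGcompact} by \cite[p.~38]{FischerDiff} and moves on. Your dyadic-annulus/Cauchy--Schwarz outline is the right shape for what is done there, but your step (iv) is slightly off. The convergence of the sum over annuli $A_k$ already comes from $s>n/2$ (the factor $(2^k|z|)^{n/2-s}$ is summable in $k$); no ``vanishing-moment'' input from $\eta$ is needed for that. What the difference $\kappa_\ell(z^{-1}x)-\kappa_\ell(x)$ buys you is uniformity in $z$ (to kill the residual $|z|^{n/2-s}$ after summing in $k$), and this is obtained by a mean-value/gradient estimate in the regime $2^\ell|z|\lesssim 1$ together with the crude triangle bound in the regime $2^\ell|z|\gtrsim 1$. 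That two-regime split, rather than a moment condition, is the mechanism you should invoke; it is exactly the Ruzhansky--Wirth/Fischer computation you already cite at the end of your plan.
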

\begin{proof}That \eqref{3.8} is a consequence of \eqref{HMconditionGcompact} was proved in \cite[Page 38]{FischerDiff}. So, let us continue with the first part of the lemma. Let us observe that, (in view of Proposition 4.13 in \cite{FischerDiff}), we have the following norm estimates
\begin{equation}\label{estimateofnormasraro}
    \Vert \sigma \tau\Vert_{\dot{L}^2_{s}(\widehat{G})}\lesssim_s\sum_{s_1+s_2=s}  \Vert\sigma\Vert_{\dot{L}^{\infty}_{s_1}(\widehat{G})}\Vert \tau\Vert_{\dot{L}^{2}_{s_2}(\widehat{G})},\,\quad
    \Vert\sigma\Vert_{\dot{L}^{\infty}_{s_1}(\widehat{G})}:=\max_{|\alpha|=s_1}\Vert \Delta^{\alpha}_\xi\sigma\Vert_{L^{\infty}(\widehat{G})},
\end{equation} in the sense that if the right-hand side is finite then the left-hand side is finite and the inequality holds.  So, applying \eqref{estimateofnormasraro} to $\tau=\psi_\ell(\langle\xi\rangle)I_{d_\xi},$ we have
\begin{align*}
    \Vert \sigma \psi_\ell(\langle\xi\rangle)\Vert_{\dot{L}^2_{s}(\widehat{G})}  &\lesssim_s\sum_{s_1+s_2=s}  \max_{|\alpha|=s_1}\Vert \Delta^{\alpha}_\xi\sigma\Vert_{L^{\infty}(\widehat{G})}\Vert \psi_\ell(\langle\xi\rangle)I_{d_\xi}\Vert_{\dot{L}^{2}_{s_2}(\widehat{G})}.
\end{align*}Observe that,
\begin{align*}
     \Vert \Delta^{\alpha}_\xi\sigma\Vert_{L^{\infty}(\widehat{G})}\leq \Vert \Delta^{\alpha}_\xi\sigma\langle\xi\rangle^{|\alpha|}\Vert_{L^{\infty}(\widehat{G})}\Vert \widehat{\mathcal{B}}(\xi)^{-|\alpha|}\Vert_{L^{\infty}(\widehat{G})}\leq \Vert \Delta^{\alpha}_\xi\sigma\langle\xi\rangle^{|\alpha|}\Vert_{L^{\infty}(\widehat{G})}.
\end{align*}Also, in view of Lemma 5.4 of \cite{FischerDiff},  for any positive and  smooth function $f\in C^{\infty}_{0}(\mathbb{R}),$ supported in $[0,a]$ with $a>0,$ one has 
\begin{equation}
    \Vert f(t\langle\xi\rangle)I_{d_\xi}\Vert_{{\dot{L}^2_{s}(\widehat{G})}} \lesssim_{s}t^{s-\frac{n}{2}}\Vert f \Vert_{H^{s'}(\mathbb{R})},\,s'>s+1/2,\,0<t\leq 1,
\end{equation}which applied to $f=\psi$ and $t=2^{-\ell},$ gives
\begin{equation*}
    \Vert \psi_\ell(\langle\xi\rangle)I_{d_\xi}\Vert_{\dot{L}^{2}_{s_2}(\widehat{G})}\lesssim \Vert \psi_\ell(\langle\xi\rangle)I_{d_\xi}\Vert_{\dot{L}^{2}_{s}(\widehat{G})}\lesssim_{s}2^{-\ell(s-\frac{n}{2})}\Vert \psi \Vert_{H^{s'}(\mathbb{R})}.
\end{equation*}The analysis above shows that 
\begin{align*}
    \Vert \sigma(\xi)\psi_{\ell}(\langle \xi\rangle)\Vert_{\dot{L}^s_2(\widehat{G})}\lesssim_{s}\sup_{|\alpha|\leq s}\Vert \Delta^{\alpha}_\xi\sigma\langle\xi\rangle^{|\alpha|}\Vert_{L^{\infty}(\widehat{G})}2^{-\ell(s-\frac{n}{2})}.
\end{align*}Thus, the proof is complete.
\end{proof}

\begin{proof}[Proof of Theorem \ref{HMTTL}] In view of Lemma \ref{lemmaofauxiliaruse}, we have that
\begin{align*}
  \Vert\sigma \Vert_{l.u.,\, L_{s}^2(\widehat{G})}'&:=  \Vert\sigma\Vert_{L^{\infty}(\widehat{G})}+ \sup_{r>0}r^{(s-\frac{n}{2})}\Vert \sigma\cdot \eta({r^{-1}\langle \xi\rangle})\Vert_{\dot{L}^2_s(\widehat{G})}\\
  &\lesssim_{s,\psi} \sup_{|\alpha|\leq s}\sup_{[\xi]\in \widehat{G}}\Vert(\Delta^{\alpha}_\xi\sigma(\xi))\langle\xi\rangle^{|\alpha|}\Vert_{\textnormal{op}}\lesssim   \sup_{|\alpha|\leq s} C_{\alpha}.
\end{align*}In view of the H\"ormander-Mihlin  Theorem \ref{CardonaRuzhansky} applied to $s:=[\frac{n}{2}]+1$,    $A\equiv T_\sigma$ extends to a bounded operator from  $F^{r}_{p,q}(G)$ into $F^{r}_{p,q}(G)$  for all $1<p,q<\infty,$ and all $r\in \mathbb{R}.$ For $p=1,$ $A$ admits a bounded extension from $F^{r}_{1,q}(G)$ into $\textrm{weak-}F^{r}_{1,q}(G).$ 
\end{proof}

\begin{proof}[Proof of Theorem \ref{CardonaRuzhansky}] 
By observing that  $(1+\mathcal{B})^{\frac{r}{2}}:F^{r}_{p,q}(G)\rightarrow F^{0}_{p,q}(G)$ and $ (1+\mathcal{B})^{-\frac{r}{2}}: F^{0}_{p,q}(G)\rightarrow F^{r}_{p,q}(G)$ are both isomorphisms,  it is enough to prove that $A$ admits a bounded extension from $ F^{0}_{p,q}(G)$ into $ F^{0}_{p,q}(G).$ For this, let us define the vector-valued operator $W:L^2(G,\mathcal{\ell}^2(\mathbb{N}_0))\rightarrow L^2(G,\mathcal{\ell}^2(\mathbb{N}_0))$ by
\begin{equation}
    W(\{g_{\ell}\}_{\ell=0}^{\infty}):=  (\{W_\ell g_{\ell}\}_{\ell=0}^{\infty}),\,\,W_{\ell}:=A\psi_{\ell}(\mathcal{B}).
\end{equation} Observe that $W$ is well-defined (bounded from $L^2(G,\mathcal{\ell}^2(\mathbb{N}_0))$ into  $L^2(G,\mathcal{\ell}^2(\mathbb{N}_0)$)), because $A$ admits a bounded extension on $L^2(G)$ and also, in view of the following estimate
\begin{align*}
  \Vert W(\{g_{\ell}\}_{\ell=0}^{\infty})\Vert^2_{ L^2(G,\mathcal{\ell}^2(\mathbb{N}_0))}
  &:=  \int\limits_{G} \sum_{\ell=0}^\infty |A\psi_\ell(\mathcal{B}) g_\ell(x)|^2dx=\sum_{\ell=0}^\infty\int\limits_{G}|A\psi_\ell(\mathcal{B}) g_\ell(x)|^2dx\\
    &\leq \Vert A \Vert^2_{\mathscr{B}(L^2(G))}\sup_{\ell}\Vert \psi_\ell(\mathcal{B}) \Vert^2_{\mathscr{B}(L^2(G))}\sum_{\ell=0}^\infty\int\limits_{G}  | g_\ell(x)|^2dx\\
    &\leq \Vert A \Vert^2_{\mathscr{B}(L^2(G))}\sup_{\ell}\Vert \psi_\ell \Vert^2_{L^{\infty}(\mathbb{R})}\sum_{\ell=0}^\infty\int\limits_{G}  | g_\ell(x)|^2dx\\
    &= \Vert A \Vert^2_{\mathscr{B}(L^2(G))}\Vert \psi \Vert^2_{L^{\infty}(\mathbb{R})}\sum_{\ell=0}^\infty\int\limits_{G}  | g_\ell(x)|^2dx\\
    &\lesssim  \Vert \{g_{\ell}\}_{\ell=0}^{\infty}\Vert^2_{ L^2(G,\mathcal{\ell}^2(\mathbb{N}_0))}. 
\end{align*}
So, observe that, in order to prove Theorem \ref{CardonaRuzhansky} it is enough to prove the following two lemmas and the estimate \eqref{equationnorms}.
\begin{lemma}\label{Lemma1} Let $G$ be a compact Lie group and let $1<q<\infty.$ Then,
$W:L^q(G,\mathcal{\ell}^q(\mathbb{N}_0))\rightarrow L^q(G,\mathcal{\ell}^q(\mathbb{N}_0))$ admits a bounded extension.
\end{lemma}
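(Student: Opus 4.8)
The plan is to prove Lemma \ref{Lemma1} by real interpolation and duality, bootstrapping from the $L^2$-boundedness of $W$ already established together with a vector-valued Calder\'on--Zygmund argument. First I would observe that each component operator $W_\ell = A\psi_\ell(\mathcal B)$ is a left-invariant operator on $G$ with right-convolution kernel $\kappa_\ell$, and that Lemma \ref{lemmaofauxiliaruse} gives the crucial \emph{uniform in $\ell$} H\"ormander-type integral estimate
\begin{equation*}
\sup_{\ell\in\mathbb{N}_0}\,\sup_{z\in G}\int_{|x|>4c|z|}|\kappa_\ell(z^{-1}x)-\kappa_\ell(x)|\,dx \lesssim \Vert\sigma\Vert_{l.u.,\,L^2_s(\widehat G)}'.
\end{equation*}
This is exactly the regularity hypothesis one needs to run a vector-valued singular integral theorem: the operator $W$ acting on $\ell^2(\mathbb N_0)$-valued functions has operator-valued kernel $K(x)=\mathrm{diag}(\kappa_\ell(x))_\ell$, it is bounded $L^2(G,\ell^2)\to L^2(G,\ell^2)$, and the Hörmander condition on $K$ (as a map $\ell^2\to\ell^2$) follows because the diagonal structure makes the $\ell^2\to\ell^2$ operator norm of $K(z^{-1}x)-K(x)$ equal to $\sup_\ell|\kappa_\ell(z^{-1}x)-\kappa_\ell(x)|$, whose integral is controlled by interchanging sup and integral only after a more careful argument — actually one wants $\int \sup_\ell|\cdots|\,dx$, which requires summing, not just the supremum. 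This is the main obstacle, discussed below.

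Assuming the kernel regularity in the $\ell^2$-valued sense, the standard vector-valued Calder\'on--Zygmund theory on spaces of homogeneous type (applicable since $G$ with the pseudo-distance $\rho$ and Haar measure is a doubling space) gives that $W$ is of weak type $(1,1)$ from $L^1(G,\ell^2)$ to $L^{1,\infty}(G,\ell^2)$ and hence, by Marcinkiewicz interpolation with the $L^2$ bound, bounded on $L^q(G,\ell^2)$ for $1<q\le 2$; duality then extends this to $2\le q<\infty$. The passage from $\ell^2$ to $\ell^q$ is then handled by a further interpolation/extrapolation: one proves the stronger statement that $W$ is bounded on $L^q(G,\ell^r)$ for a range of $r$ by interpolating the $\ell^2\to\ell^2$ bounds (valid on all $L^q$, $1<q<\infty$) against the trivial observation that $W$ is pointwise dominated componentwise and $\sup_\ell\Vert W_\ell\Vert_{\mathscr B(L^\infty)}$, $\sup_\ell\Vert W_\ell\Vert_{\mathscr B(L^1,L^{1,\infty})}$ are finite; equivalently, one uses complex interpolation of the vector-valued spaces $[L^{q_0}(G,\ell^{r_0}),L^{q_1}(G,\ell^{r_1})]_\theta = L^q(G,\ell^r)$ with the operator $W$ bounded on the endpoints, to land on $q=r$.

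Concretely, the cleanest route I would take is: (i) show $W:L^q(G,\ell^q)\to L^q(G,\ell^q)$ for $q=2$ (done) and deduce it for general $q$ by noting that $W$ is really a single left-invariant operator on $G$ with values in bounded operators on $\ell^q$, and its kernel $\kappa_\ell$-diagonal satisfies, uniformly, $\sup_{z}\int_{|x|>4c|z|}\Vert K(z^{-1}x)-K(x)\Vert_{\mathscr B(\ell^q)}\,dx<\infty$ because for a diagonal operator the $\ell^q\to\ell^q$ norm is again $\sup_\ell|\kappa_\ell(z^{-1}x)-\kappa_\ell(x)|$, independent of $q$; (ii) invoke the operator-valued (Banach-space-valued) Calder\'on--Zygmund theorem — valid for UMD targets, and $\ell^q$ is UMD for $1<q<\infty$ — to get weak $(1,1)$ and hence $L^q$-boundedness for $1<q<\infty$. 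The honest main difficulty is verifying the $\mathscr B(\ell^q)$-valued Hörmander kernel condition with a bound uniform in the relevant parameters: Lemma \ref{lemmaofauxiliaruse} supplies the estimate \eqref{3.8} for each fixed $\ell$ with constant $2^{-\ell(s-n/2)}$, which is \emph{summable} in $\ell\ge 0$, so in fact $\int_{|x|>4c|z|}\sup_\ell|\kappa_\ell(z^{-1}x)-\kappa_\ell(x)|\,dx \le \sum_\ell \int_{|x|>4c|z|}|\kappa_\ell(z^{-1}x)-\kappa_\ell(x)|\,dx \lesssim \sum_{\ell\ge0}2^{-\ell(s-n/2)}\Vert\sigma\Vert'_{l.u.,L^2_s} \lesssim \Vert\sigma\Vert'_{l.u.,L^2_s}$, which controls the $\mathscr B(\ell^\infty)\supset\mathscr B(\ell^q)$-valued kernel. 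With that, the diagonal operator-valued CZ machinery closes the argument and yields the $\ell^q$-valued bound with norm $\lesssim \Vert\sigma\Vert'_{l.u.,L^2_s(\widehat G)}$, which is the estimate needed for \eqref{equationnorms}.
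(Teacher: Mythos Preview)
Your route via operator-valued Calder\'on--Zygmund theory with UMD targets is considerably more elaborate than what the statement requires, and as written it has a genuine gap. The Banach-space-valued CZ theorem you invoke needs an \emph{initial} boundedness $W:L^{r}(G,\ell^q)\to L^{r}(G,\ell^q)$ for some $1<r<\infty$ with the \emph{same} target $\ell^q$; you only have this for $q=2$. Your attempted bridges to general $q$ do not work: the claim that $\sup_\ell\Vert W_\ell\Vert_{\mathscr B(L^\infty)}<\infty$ is false in general (a bounded Fourier multiplier symbol does not give an $L^\infty$-bounded operator --- think of the Hilbert transform), and the complex interpolation you sketch lacks a second admissible endpoint with $r\neq 2$. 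So the argument does not close.

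The paper's proof is far simpler and avoids all of this. The key observation you are missing is that for the diagonal operator $W=\operatorname{diag}(W_\ell)$ and the \emph{matched} exponents $p=q$, Fubini gives the isometric identification $L^q(G,\ell^q(\mathbb N_0))\cong \ell^q(\mathbb N_0,L^q(G))$, so
\[
\Vert W(\{g_\ell\})\Vert_{L^q(\ell^q)}^q=\sum_{\ell}\Vert W_\ell g_\ell\Vert_{L^q(G)}^q\le \bigl(\sup_\ell\Vert W_\ell\Vert_{\mathscr B(L^q)}\bigr)^q\sum_\ell\Vert g_\ell\Vert_{L^q(G)}^q,
\]
and the lemma reduces to the purely \emph{scalar} assertion that the $W_\ell$ are uniformly bounded on $L^q(G)$. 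That, in turn, follows from the standard scalar CZ machinery: uniform $L^2$-boundedness is immediate, and uniform weak $(1,1)$ comes from the kernel estimate \eqref{3.8} (indeed your summation $\sum_\ell 2^{-\ell(s-n/2)}\lesssim 1$ is exactly the bound the paper uses here), after which Marcinkiewicz interpolation and duality finish the job. No vector-valued kernel, no UMD, no operator-valued H\"ormander condition is needed for Lemma~\ref{Lemma1}; that machinery is closer in spirit to what is actually required for Lemma~\ref{Lemma2}, where $p\neq q$ and the Fubini trick is unavailable.
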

\begin{lemma}\label{Lemma2} Let $G$ be a compact Lie group and let $1<q<\infty.$ Then,
$W:L^1(G,\mathcal{\ell}^q(\mathbb{N}_0))\rightarrow L^{1,\infty}(G,\mathcal{\ell}^q(\mathbb{N}_0))$ admits a bounded extension.
\end{lemma}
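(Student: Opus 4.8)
The plan is to recognise $W$ as an $\ell^q(\mathbb{N}_0)$-valued, left-invariant singular integral operator whose operator-valued convolution kernel is \emph{diagonal}, and then to run the vector-valued Calder\'on--Zygmund machinery on $G$ viewed as a space of homogeneous type. Write $\kappa_\ell$ for the right-convolution kernel of $W_\ell=A\psi_\ell(\mathcal B)$, so that $W$ is convolution against $K(x):=\mathrm{diag}\big(\kappa_\ell(x)\big)_{\ell\in\mathbb N_0}$; because $K(x)$ is diagonal, $\Vert K(x)\Vert_{\mathscr B(\ell^q(\mathbb N_0))}=\sup_{\ell}|\kappa_\ell(x)|$ for every $q$. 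For Lemma \ref{Lemma1} I would first note that, since the $\ell^q$-sum and the $L^q$-integral use the same exponent, $\Vert Wf\Vert_{L^q(G,\ell^q(\mathbb N_0))}^q=\sum_{\ell}\Vert W_\ell f_\ell\Vert_{L^q(G)}^q$, so that Lemma \ref{Lemma1} reduces to the uniform bound $\sup_\ell\Vert W_\ell\Vert_{\mathscr B(L^q(G))}<\infty$. This follows by applying the scalar H\"ormander--Mihlin Theorem \ref{HM} to the multiplier $\sigma(\xi)\psi_\ell(\langle\xi\rangle)I_{d_\xi}$, whose $\Vert\cdot\Vert_{l.u.,\,L_s^2(\widehat G)}'$-norm is bounded by $\Vert\sigma\Vert_{l.u.,\,L_s^2(\widehat G)}'$ uniformly in $\ell$ by Lemma \ref{lemmaofauxiliaruse} and the dyadic localisation of $\eta(r^{-1}\langle\xi\rangle)\psi_\ell(\langle\xi\rangle)$ (non-trivial only for $r\sim 2^\ell$); equivalently one factors $W_\ell=A\psi_\ell(\mathcal B)$ and combines the $L^q(G)$-boundedness of $A$ with the uniform $L^q(G)$-boundedness of the Littlewood--Paley projections $\psi_\ell(\mathcal B)$.

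For Lemma \ref{Lemma2}, the decisive step is to upgrade the family of scalar kernel estimates \eqref{3.8} to a single operator-valued H\"ormander condition on $\ell^q(\mathbb N_0)$. Since $K$ is diagonal and \eqref{3.8} holds term by term with the summable gain $2^{-\ell(s-n/2)}$, one has
\begin{align*}
\sup_{z\in G}\int\limits_{|x|>4c|z|}\big\Vert K(z^{-1}x)-K(x)\big\Vert_{\mathscr B(\ell^q(\mathbb N_0))}\,dx
&\le\sum_{\ell=0}^{\infty}\sup_{z\in G}\int\limits_{|x|>4c|z|}\big|\kappa_\ell(z^{-1}x)-\kappa_\ell(x)\big|\,dx\\
&\lesssim\Big(\sum_{\ell=0}^{\infty}2^{-\ell(s-\frac n2)}\Big)\,\Vert\sigma\Vert_{l.u.,\,L_s^2(\widehat G)}'<\infty ,
\end{align*}
where finiteness uses $s>n/2$. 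This is precisely the vector-valued Calder\'on--Zygmund kernel condition.

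With this in hand, together with the $L^q(G,\ell^q(\mathbb N_0))$-boundedness of $W$ from Lemma \ref{Lemma1} as the a priori strong bound, I would carry out the standard Calder\'on--Zygmund decomposition: given $f\in L^1(G,\ell^q(\mathbb N_0))$ and $t>0$, decompose the scalar function $x\mapsto\Vert f(x)\Vert_{\ell^q(\mathbb N_0)}$ at height $t$ to write $f=g+b$, $b=\sum_j b_j$ with $\mathrm{supp}\,b_j\subset Q_j$, $\int b_j=0$ in $\ell^q(\mathbb N_0)$, $\Vert g\Vert_{L^\infty(G,\ell^q(\mathbb N_0))}\lesssim t$, and $\sum_j|Q_j|\lesssim t^{-1}\Vert f\Vert_{L^1(G,\ell^q(\mathbb N_0))}$. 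The good part is controlled by Chebyshev and Lemma \ref{Lemma1}: $|\{\Vert Wg\Vert_{\ell^q(\mathbb N_0)}>t/2\}|\lesssim t^{-q}\Vert g\Vert_{L^q(G,\ell^q(\mathbb N_0))}^q\lesssim t^{-q}\,t^{q-1}\Vert f\Vert_{L^1(G,\ell^q(\mathbb N_0))}$. The bad part is controlled, off the dilated balls $Q_j^{*}$, by the operator-valued H\"ormander condition and the cancellation of the $b_j$, giving $\int_{(\cup_jQ_j^{*})^c}\Vert Wb(x)\Vert_{\ell^q(\mathbb N_0)}\,dx\lesssim\sum_j\Vert b_j\Vert_{L^1(G,\ell^q(\mathbb N_0))}\lesssim\Vert f\Vert_{L^1(G,\ell^q(\mathbb N_0))}$, while $|\cup_jQ_j^{*}|\lesssim t^{-1}\Vert f\Vert_{L^1(G,\ell^q(\mathbb N_0))}$. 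Summing yields the weak $(1,1)$ bound of Lemma \ref{Lemma2} with constant $\lesssim\Vert\sigma\Vert_{l.u.,\,L_s^2(\widehat G)}'$. Feeding Lemmas \ref{Lemma1} and \ref{Lemma2} into the identity $\psi_\ell(\mathcal B)Af=W_\ell(\psi_\ell(\mathcal B)f)$ then gives the $p=1$ case of Theorem \ref{CardonaRuzhansky} at once, and the case $1<p,q<\infty$ by Marcinkiewicz interpolation between the two lemmas (plus duality for $q<p<\infty$), after transferring through the isomorphisms $(1+\mathcal B)^{\pm r/2}\colon F^{r}_{p,q}(G)\leftrightarrow F^{0}_{p,q}(G)$.

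The step I expect to be the main obstacle is the one displayed above: converting the block-by-block scalar kernel estimates \eqref{3.8} into a single H\"ormander condition in the operator norm of $\mathscr B(\ell^q(\mathbb N_0))$. This succeeds only because the kernel is diagonal, so that the $\mathscr B(\ell^q(\mathbb N_0))$-norm collapses to a supremum dominated by the $\ell$-sum, and because \eqref{3.8} comes with the geometric gain $2^{-\ell(s-n/2)}$ — which is exactly the role of the hypothesis $s>n/2$. Everything else is the by-now-routine vector-valued Calder\'on--Zygmund argument; the only minor care needed is that on a compact group balls of large radius exhaust $G$, but this is harmless, since after the decomposition at height $t$ one only meets balls $Q_j$ of small radius, on which \eqref{3.8} applies verbatim.
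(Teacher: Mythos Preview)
Your proposal is correct and follows essentially the same route as the paper: both run the vector-valued Calder\'on--Zygmund decomposition on the scalar function $x\mapsto\Vert f(x)\Vert_{\ell^q}$, control the good part via Lemma~\ref{Lemma1} and Chebyshev at exponent $q$, and handle the bad part off the dilated sets by exploiting the diagonal structure of the kernel together with the summable gain $2^{-\ell(s-n/2)}$ in \eqref{3.8}. The only difference is packaging---you phrase the kernel bound as a single operator-valued H\"ormander condition on $\mathscr B(\ell^q(\mathbb N_0))$, whereas the paper writes the same estimate out componentwise (bounding $(\sum_\ell|\cdot|^r)^{1/r}$ by $\sum_\ell|\cdot|$ before applying \eqref{3.8}); the mathematical content is identical.
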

\begin{corollary} Let $G$ be a compact Lie group and let $1<p,q<\infty.$ Then,  $W:L^p(G,\mathcal{\ell}^q(\mathbb{N}_0))\rightarrow L^{p}(G,\mathcal{\ell}^q(\mathbb{N}_0))$ admits a bounded extension.
\end{corollary}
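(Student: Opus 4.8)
The plan is to derive the Corollary from Lemmas \ref{Lemma1} and \ref{Lemma2} by Banach-valued real interpolation together with a duality argument. Fix $1<q<\infty$. Lemma \ref{Lemma2} gives the weak type bound $W\colon L^1(G,\ell^q(\mathbb{N}_0))\to L^{1,\infty}(G,\ell^q(\mathbb{N}_0))$, and Lemma \ref{Lemma1} gives the strong type bound $W\colon L^q(G,\ell^q(\mathbb{N}_0))\to L^q(G,\ell^q(\mathbb{N}_0))\hookrightarrow L^{q,\infty}(G,\ell^q(\mathbb{N}_0))$. Since $W$ is linear and the target is the fixed Banach space $\ell^q(\mathbb{N}_0)$, the Marcinkiewicz interpolation theorem applies verbatim to the Banach-valued Lebesgue scale (one replaces $|f(x)|$ by $\Vert f(x)\Vert_{\ell^q}$ in the classical argument), and yields $W\colon L^p(G,\ell^q(\mathbb{N}_0))\to L^p(G,\ell^q(\mathbb{N}_0))$ for every $1<p\le q$, with norm controlled by a constant times $\Vert\sigma\Vert'_{l.u.,\,L^2_s(\widehat{G})}$.

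For the range $q<p<\infty$ I would pass to the adjoint. Because $W$ acts diagonally, $W(\{g_\ell\}_\ell)=\{W_\ell g_\ell\}_\ell$ with $W_\ell=A\psi_\ell(\mathcal{B})$, its adjoint on $L^2(G,\ell^2(\mathbb{N}_0))$ is again diagonal, $W^*(\{g_\ell\}_\ell)=\{W_\ell^* g_\ell\}_\ell$ with $W_\ell^*=\psi_\ell(\mathcal{B})^*A^*=\psi_\ell(\mathcal{B})A^*=A^*\psi_\ell(\mathcal{B})$, using that $\mathcal{B}=(1+\mathcal{L}_G)^{1/2}$ is self-adjoint, that $\psi_\ell$ is real-valued, and that $A^*$ and $\psi_\ell(\mathcal{B})$ are left-invariant (hence commuting) Fourier multipliers. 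Now $A^*=T_{\sigma^*}$ with $\sigma^*(\xi)=\sigma(\xi)^*$, and the H\"ormander--Mihlin condition \eqref{toverifyHM} is stable under the $*$-operation on symbols — the relevant difference operators commute with passing to adjoints up to a relabelling of the admissible collection, and operator norms are unchanged — so $A^*$ satisfies the hypotheses of Theorem \ref{CardonaRuzhansky} with the same constant. Hence Lemmas \ref{Lemma1} and \ref{Lemma2} apply to $W^*$ with the exponent $q'$, giving $W^*\colon L^1(G,\ell^{q'}(\mathbb{N}_0))\to L^{1,\infty}(G,\ell^{q'}(\mathbb{N}_0))$ and $W^*\colon L^{q'}(G,\ell^{q'}(\mathbb{N}_0))\to L^{q'}(G,\ell^{q'}(\mathbb{N}_0))$, and the first step applied to $W^*$ produces $W^*\colon L^{p'}(G,\ell^{q'}(\mathbb{N}_0))\to L^{p'}(G,\ell^{q'}(\mathbb{N}_0))$ for all $1<p'\le q'$. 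Since $(L^p(G,\ell^q(\mathbb{N}_0)))^*=L^{p'}(G,\ell^{q'}(\mathbb{N}_0))$ and $p'\le q'$ is equivalent to $p\ge q$, dualizing gives $W\colon L^p(G,\ell^q(\mathbb{N}_0))\to L^p(G,\ell^q(\mathbb{N}_0))$ for all $q\le p<\infty$. Combining this with the first step (the case $p=q$ being already covered by Lemma \ref{Lemma1}) proves the Corollary for the full range $1<p,q<\infty$, and keeping track of constants through interpolation and duality yields the quantitative bound by $\Vert\sigma\Vert'_{l.u.,\,L^2_s(\widehat{G})}$ needed for \eqref{equationnorms}.

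The only genuinely delicate point is the identification of $W^*$ as an operator of exactly the same form built from $A^*$, together with the stability of the underlying symbol class under adjunction; once these are granted, the rest is the standard machinery of Banach-valued Marcinkiewicz interpolation and $L^p$--$L^{p'}$ duality. A minor point to watch is that the interpolation is performed on the genuinely linear operator $W$ with values in $\ell^q(\mathbb{N}_0)$ with $q>1$, so that no quasi-Banach complication arises, and that the weak-type endpoint of Lemma \ref{Lemma2} is formulated for the same $\ell^q(\mathbb{N}_0)$-valued target as the strong endpoint of Lemma \ref{Lemma1}, which makes the interpolation couple legitimate.
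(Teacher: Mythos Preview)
Your proof is correct and follows essentially the same approach as the paper: Marcinkiewicz interpolation between the endpoints of Lemmas \ref{Lemma1} and \ref{Lemma2} to obtain the range $1<p\le q$, and then duality via the adjoint $W^*$ (built from $A^*$, which satisfies the same symbol hypothesis) to cover $q\le p<\infty$. You have simply supplied more detail on the identification of $W^*$ and on the stability of the H\"ormander--Mihlin condition under adjunction than the paper does.
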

\begin{proof}
Indeed, by the Marcinkiewicz interpolation, these two lemmas are enough to show that $W:L^p(G,\mathcal{\ell}^q(\mathbb{N}_0))\rightarrow L^{p}(G,\mathcal{\ell}^q(\mathbb{N}_0))$ admits a bounded extension for all $1<p\leq q<\infty.$ The case $1<q\leq p<\infty$ follows from the fact that  $L^{p'}(G,\mathcal{\ell}^{q'}(\mathbb{N}_0))$ is the dual of $L^{p}(G,\mathcal{\ell}^{q}(\mathbb{N}_0))$ and also that  Lemma \ref{Lemma1} and  Lemma \ref{Lemma2} hold if we change $A$ by its standard $L^2$-adjoint.
\end{proof}
\begin{remark}[Lemmas \ref{Lemma1} and \ref{Lemma2} imply Theorem \ref{CardonaRuzhansky}]\label{RemarkInterpolation} By defining $\psi_{-1}=\psi_0,$ we have
\begin{align*}
  \Vert Af\Vert_{F^{0}_{p,q}(G)}&=  \left\Vert \left(\sum_{l=0}^{\infty} |A\psi_{l}(\mathcal{B})f|^{q}    \right)^{\frac{1}{q}}\right\Vert_{L^{p}(G)}\\
  &\leq    \left\Vert \left(\sum_{l=0}^{\infty} |A\psi_{l}(\mathcal{B})[\psi_{l-1}(\mathcal{B})+\psi_{l}(\mathcal{B})+\psi_{l+1}(\mathcal{B})] f|^{q}    \right)^{\frac{1}{q}}\right\Vert_{L^{p}(G)}\\
  &=\Vert W(\{[\psi_{l-1}(\mathcal{B})+\psi_{l}(\mathcal{B})+\psi_{l+1}(\mathcal{B})] f\}_{l=0}^{\infty}) \Vert_{L^p(\ell^q)}\\
  &\lesssim \Vert \{ [\psi_{l-1}(\mathcal{B})+\psi_{l}(\mathcal{B})+\psi_{l+1}(\mathcal{B})] f\}_{l=0}^{\infty}) \Vert_{L^p(\ell^q)} \\
  &\lesssim \Vert f \Vert_{F^{0}_{p,q}(G)}. 
\end{align*} Also note that from Lemma \ref{Lemma2}, we have

\begin{align*}
  \Vert Af\Vert_{\textrm{weak-}F^{0}_{1,q}(G)}&=  \left\Vert \left(\sum_{l=0}^{\infty} |A\psi_{l}(\mathcal{B})f|^{q}    \right)^{\frac{1}{q}}\right\Vert_{L^{1,\infty}(G)}\\
  &\leq    \left\Vert \left(\sum_{l=0}^{\infty} |A\psi_{l}(\mathcal{B})[\psi_{l-1}(\mathcal{B})+\psi_{l}(\mathcal{B})+\psi_{l+1}(\mathcal{B})] f|^{q}    \right)^{\frac{1}{q}}\right\Vert_{L^{1,\infty}(G)}\\
  &=\Vert W(\{[\psi_{l-1}(\mathcal{B})+\psi_{l}(\mathcal{B})+\psi_{l+1}(\mathcal{B})] f\}_{l=0}^{\infty}) \Vert_{L^{1,\infty}(\ell^q)}\\
  &\lesssim \Vert \{ [\psi_{l-1}(\mathcal{B})+\psi_{l}(\mathcal{B})+\psi_{l+1}(\mathcal{B})] f\}_{l=0}^{\infty}) \Vert_{L^1(\ell^q)} \lesssim \Vert f \Vert_{F^{0}_{1,q}(G)}. 
\end{align*}
By observing that Lemmas \ref{Lemma1} and \ref{Lemma2} are enough for proving Theorem \ref{CardonaRuzhansky}, we will proceed with their proofs. For this, let us recall that on any amenable topological group, and also in several spaces of homogeneous type one has the Calder\'on-Decomposition lemma. We fix it in the following remark.
\end{remark}
\begin{remark}\label{CZdecompositioncompact} For any   non-negative function $f\in  L^1(G),$ one has  its Calder\'on-Zygmund decomposition. Indeed, by following Hebish \cite{Hebish}, (whose construction remains valid for any amenable group, in particular, compact Lie groups)  one can  obtain a suitable family of  disjoint open sets  $\{I_j\}_{j=0}^{\infty}$  such that
\begin{itemize}
    \item $f(x)\leq t,$ for $a.e.$ $x\in G\setminus \cup_{j\geq 0}I_j,$\\
    
    \item $\sum_{j\geq 0}|I_j|\leq \frac{C}{t}\Vert f\Vert_{L^1(G)},$ and\\ 
    
    \item $t|I_j|\leq \int_{I_j}f(x)dx\leq 2|I_j|t,$ for all $j.$ 
\end{itemize}Now, for every $j\in \mathbb{N}_0,$ let us define $R_j$ by
\begin{equation}\label{Rj}
    R_{j}:=\sup\{R>0: B(z_j,R)\subset I_j, \textnormal{   for some   }z_j\in I_j\},
\end{equation} where $B(z_j,R)=\{x\in I_j:|z_j^{-1}x|<R\}.$ Every $I_j$ is bounded, and one can assume that  $I_j\subset B(z_j,2R_j),$ where $z_j\in I_j.$ 
Let us note that by assuming $f(e_G)>t,$ (this  can be done, just by re-defining $f\in  L^1(G)$ at the identity element $e_G$ of $G$) we should have that
\begin{equation}\label{eGasump}
    e_{G}\in \bigcup_{j}I_j,
\end{equation} because $f(x)\leq t,$ for $a.e.$ $x\in G\setminus \cup_{j\geq 0}I_j.$

Let us define, for every $x\in I_j,$
\begin{equation}
    g(x):=\frac{1}{|I_j|}\int\limits_{I_j}f(y)dy,\,\,\,b(x)=f(x)-g(x),
\end{equation} and for $x\in G\setminus\cup_{j\geq 0}I_j,$
$
    g(x)=f(x),\,\,\, b(x)=0.
$ Then, one has the decomposition $f=g+b,$ with $b$ having null average on $I_j.$

\end{remark}
\begin{proof}[Proof of Lemma \ref{Lemma1}] Let us fix $f\in L^1(G)$ to be a non-negative function, and let us consider its Calder\'on-Zygmund decomposition $f=g+b$ as in Remark \ref{CZdecompositioncompact}. It is enough to demonstrate that the linear operators $W_{\ell},$ $\ell\in \mathbb{N}_0,$ are uniformly bounded on $L^q(G),$ $1<q<\infty.$ This fact is straightforward  if $q=2,$ so it is suffices (by the duality argument) that the operators $W_{\ell}$ are uniformly bounded from $L^1(G)$ into $L^{1,\infty}(G).$ So, we will prove the existence of $C>0,$ independent of $f\in L^1(G),$ and  $\ell\in \mathbb{N}_0,$ such that
\begin{equation}\label{weak(1,1)inequalityWell}
    \left|\left\{x\in G:|W_\ell f(x)|  >t \right\}\right|\leq \frac{C}{t}\Vert f\Vert_{L^1(G)}.
\end{equation}
 Let us remark  that for every $x\in I_j,$ 
\begin{align*}
    |g(x)|= \left|   \frac{1}{|I_j|}\int\limits_{I_j}f(y)dy \right|\leq 2t.
\end{align*}
By  the Minkowski inequality, we have \begin{align*}
   & \left|\left\{x\in G:|W_{\ell} f(x)|>t \right\}\right|\leq \left|\left\{x\in G:|W_{\ell} g(x)|>\frac{t}{2} \right\} \right|\\
    &\hspace{3cm}+\left|\left\{x\in G:|W_{\ell} b(x)|>\frac{t}{2} \right\}\right|.
\end{align*}
In view of the Chebyshev inequality, we get
\begin{align*}
    & \left|\left\{x\in G:|W_\ell f(x)|>t \right\}\right|\\
    & \leq \left|\left\{x\in G:|W_{\ell} g(x)|>\frac{t}{2} \right\} \right|+\left|\left\{x\in G:|W_{\ell} b(x)|>\frac{t}{2} \right\}\right|\\
    &=  \left|\left\{x\in G:|W_{\ell} g(x)|^2>\frac{t^{2}}{2^2} \right\} \right|+\left|\left\{x\in G:|W_{\ell} b(x)|>\frac{t}{2} \right\}\right|\\
    &\leq \frac{2^2}{t^2}\int\limits_{G}|W_\ell g(x)|^2dx+ \left|\left\{x\in G:|W_{\ell}b(x)|>\frac{t}{2} \right\}\right|\\
    &\leq \frac{2^2}{t^2}\sup_{\ell}\Vert W_{\ell}\Vert_{\mathscr{B}(L^2(G))}\int\limits_{G}| g(x)|^2dx+ \left|\left\{x\in G:|W_{\ell}b(x)|>\frac{t}{2} \right\}\right|\\
    &\lesssim \frac{2^2}{t^2}\int\limits_{G}| g(x)|^2dx+ \left|\left\{x\in G:|W_{\ell}b(x)|>\frac{t}{2} \right\}\right|,
\end{align*}
in view of the $L^2(G)$-boundedness of $A$ and the fact that the operators $\psi_{\ell}(\mathcal{B})$ are $L^2(G)$-bounded uniformly  in $\ell\in \mathbb{N}_0.$ Additionally, note that the  estimate
\begin{align*}
    \Vert g\Vert_{L^2(G)}^2&=\int\limits_{G}|g(x)|^2dx=\sum_{j}\int\limits_{I_j}|g(x)|^2dx+\int\limits_{G\setminus \cup_{j}I_j}|g(x)|^2dx\\
    &=\sum_{j}\int\limits_{I_j}|g(x)|^2dx+\int\limits_{G\setminus \cup_{j}I_j}|f(x)|^2dx\\
    &\leq \sum_{j}\int\limits_{I_j}(2t)^{2}dx+\int\limits_{G\setminus \cup_{j}I_j}f(x)^2dx\lesssim t^{2}\sum_{j}|I_j|+\int\limits_{G\setminus \cup_{j}I_j}f(x)f(x)dx\\
    &\leq t^{2}\times \frac{C}{t}\Vert f\Vert_{L^1(G)}+t\int\limits_{G\setminus \cup_{j}I_j}f(x)dx\lesssim t\Vert f\Vert_{L^1(G)},
\end{align*} implies that
\begin{align*}
  \Small{ \left|\left\{x\in G:|W_\ell f(x)|>t \right\}\right|\leq \frac{4}{t}\Vert f\Vert_{L^1(G)}+ \left|\left\{x\in G:|W_\ell b(x)|^2   >\frac{t}{2} \right\}\right|.}
\end{align*}
Taking into account that $b\equiv 0$ on $G\setminus \cup_j I_j,$ we have that
\begin{equation}
    b=\sum_{k}b_k,\,\,\,b_k(x)=b(x)\cdot 1_{I_k}(x).
\end{equation} Let us assume that $I_{j}^*$ is an open set, such that $I_j\subset I_j^*,$ and $|I_{j}^*|=K|I_{j}|$ for some $K>0,$ and $\textnormal{dist}(\partial I_{j}^*,\partial I_{j})\geq 4c\,\textnormal{dist}(\partial I_{j},e_{G}),$ where $c>0$ and $e_{G}$ is the identity element of $G$.  So, by the Minkowski inequality we have
\begin{align*}
     & \left|\left\{x\in G:|W_\ell b(x)|  >\frac{t}{2} \right\}\right|\\
      &=\left|\left\{x\in \cup_j I_j^*:|W_\ell b(x)|   >\frac{t}{2} \right\}\right|+\left|\left\{x\in G\setminus  \cup_j I_j^*:|W_\ell b(x)|   >\frac{t}{2} \right\}\right|\\
      &\leq \left|\left\{x\in G:x\in \cup_j I_j^* \right\}\right|+\left|\left\{x\in G\setminus  \cup_j I_j^*:|W_\ell b(x)|   >\frac{t}{2} \right\}\right|.
      \end{align*} Consequently, we deduce the estimates
      \begin{align*} & \left|\left\{x\in G:|W_\ell b(x)|  >\frac{t}{2} \right\}\right|\leq\sum_{j}|I_j^*|+\left|\left\{x\in G\setminus  \cup_j I_j^*:|W_\ell b(x)|   >\frac{t}{2} \right\}\right|\\
      &=K\sum_{j}|I_j|+\left|\left\{x\in G\setminus  \cup_j I_j^*:|W_\ell b(x)|   >\frac{t}{2} \right\}\right|\\
      &\leq \frac{CK}{t}\Vert f\Vert_{L^1(G)}+\left|\left\{x\in G\setminus  \cup_j I_j^*:|W_\ell b(x)|   >\frac{t}{2} \right\}\right|.
  \end{align*}Now, using the Chebyshev inequality  to estimate the right hand side above we obtain
  \begin{align*}
     & \left|\left\{x\in G\setminus  \cup_j I_j^*:|W_\ell b(x)|   >\frac{t}{2} \right\}\right|\leq\frac{2}{t}\int\limits_{ G\setminus  \cup_j I_j^*} |W_{\ell}b(x)|dx\\
     &\leq  \frac{2}{t}\sum_{k}\int\limits_{ G\setminus  \cup_j I_j^*} |W_{\ell}b_k(x)|dx.
  \end{align*}
From now, let us denote by $\kappa_\ell$  the right convolution kernel of     $W_\ell:=A\psi_\ell(\mathcal{B}) .$ Observe that
\begin{align*}
     &\left|\left\{x\in G\setminus  \cup_j I_j^*:|W_\ell b(x)|   >\frac{t}{2} \right\}\right|\leq \frac{2}{t}\sum_{k}\int\limits_{ G\setminus  \cup_j I_j^*} |W_{\ell}b_k(x)|dx\\
     &= \frac{2}{t}\sum_{k}\int\limits_{ G\setminus  \cup_j I_j^*}\left|b_k   \ast \kappa_{\ell}(x)\right|dx\\
     &= \frac{2}{t}\sum_{k}\int\limits_{ G\setminus  \cup_j I_j^*}\left|\int\limits_{I_k}b_k(z)\kappa_{\ell}(z^{-1}x)dz   \right|dx.
     \end{align*} By using that the average of $b_{k}$ on $I_k$ is zero, $\int_{I_k}b_{k}(z)dz=0,$ we have
     \begin{align*}
     \frac{2}{t}\sum_{k} \int\limits_{ G\setminus  \cup_j I_j^*}  &\left|\int\limits_{I_k}b_k(z)\kappa_{\ell}(z^{-1}x)dz   \right|dx\\
      &=\frac{2}{t}\sum_{k}\int\limits_{ G\setminus  \cup_j I_j^*}\left|\int\limits_{I_k}b_k(z)\kappa_{\ell}(z^{-1}x)dz  - \kappa_{\ell}(x)\int\limits_{I_k}b_{k}(z)dz \right|dx\\
       &=\frac{2}{t}\sum_{k}\int\limits_{ G\setminus  \cup_j I_j^*}\left|\int\limits_{I_k}(\kappa_{\ell}(z^{-1}x)  - \kappa_{\ell}(x))b_{k}(z) dz\right|dx.
     \end{align*}
 Assuming the following uniform estimate
 \begin{equation}\label{calderonkernels}
    M=\sup_{k} \sup_{z\in I_k}\sum_{\ell=0}^\infty\int\limits_{ G\setminus  \cup_j I_j^*}|\kappa_{\ell}(z^{-1}x)  - \kappa_{\ell}(x)|dx<\infty,
 \end{equation} we have
     \begin{align*}
          & \left|\left\{x\in G\setminus  \cup_j I_j^*:|W_\ell b(x)|   >\frac{t}{2} \right\}\right|\leq \frac{2M}{t}\sum_{k}\int\limits_{I_k}|b_{k}(z)|dz\\
          &=\frac{2M}{t}\| b\|_{L^1(G)}\leq \frac{6M}{t}\| f\|_{L^1(G)}.
     \end{align*}
     So, if we prove the uniform estimate \eqref{calderonkernels}, we obtain the weak (1,1) inequality  for $f\in  L^1(G),$ $f\geq 0$. For the proof of \eqref{calderonkernels} let us use the estimates of the Calder\'on-Zygmund kernel of every operator $W_{\ell}.$ 
     Before continuing with the proof let us use the following geometrical property of the open sets $I_{j},$ $j\geq 0$:
     \begin{lemma}\label{distanceboundaries}Let us consider  \eqref{eGasump}. Then, with the notation above, for any $ x\in G\setminus \cup_jI_j^*,$ and   for all  $z\in {I_k},$ we have $|z|\lesssim |x|,$ i.e. for some $c>0,$ $  4c|z|\leq |x|.$
\end{lemma}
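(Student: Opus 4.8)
The plan is to reduce the claim to the single set $I_k$ containing the given $z$. Fix $k$ and $z\in I_k$; since $I_k^*\subset\bigcup_jI_j^*$, the hypothesis $x\in G\setminus\bigcup_jI_j^*$ already yields $x\notin I_k^*$, so it would be enough to show $|z|\lesssim|x|$ with an implied constant depending only on $G$ — the inequality $4c|z|\le|x|$ of the statement is then this estimate with the constant renamed. I would use throughout that $|g|$ is comparable to a left-invariant geodesic (length) distance on $G$, so that $d(y_1,y_2):=|y_1^{-1}y_2|$ agrees, up to a fixed factor, with the infimal length of curves from $y_1$ to $y_2$, and that a curve leaving an open set meets its boundary.

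First I would bound $d(z,x)$ from below: joining $z\in I_k\subset I_k^*$ to $x\notin I_k^*$ by an almost length-minimising curve $\gamma$, that curve must leave $\overline{I_k}$ at a point $a\in\partial I_k$ and, afterwards, leave $\overline{I_k^*}$ at a point $b\in\partial I_k^*$, whence
\begin{equation*}
 d(z,x)\gtrsim\textnormal{length}(\gamma)\ge d(a,b)\ge\textnormal{dist}(\partial I_k,\partial I_k^*)\ge 4c\,\textnormal{dist}(\partial I_k,e_G).
\end{equation*}
Then I would bound $|z|$ from above, using the Whitney-type feature of the decomposition relative to the reference point: the sets $I_j$ are comparable to balls, $I_k\subset B(z_k,2R_k)$ with $B(z_k,R)\subset I_k$ for every $R<R_k$, and, because the $I_j$ are adapted to $e_G$ (this is where the normalisation $e_G\in\bigcup_jI_j$ of \eqref{eGasump} and the joint consistency of the two requirements $|I_k^*|=K|I_k|$, $\textnormal{dist}(\partial I_k^*,\partial I_k)\ge 4c\,\textnormal{dist}(\partial I_k,e_G)$ on the enlargement enter), one has $\textnormal{diam}(\overline{I_k})\asymp R_k\lesssim\textnormal{dist}(\partial I_k,e_G)$. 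When $e_G\in I_k$ this already gives $|z|\le\textnormal{diam}(\overline{I_k})\lesssim\textnormal{dist}(\partial I_k,e_G)$; when $e_G\notin I_k$, taking $p_k\in\partial I_k$ nearest to $e_G$ gives $|z|\le|p_k|+d(p_k,z)\le\textnormal{dist}(\partial I_k,e_G)+\textnormal{diam}(\overline{I_k})\lesssim\textnormal{dist}(\partial I_k,e_G)$.

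Combining the two bounds would give $d(z,x)\gtrsim|z|$; choosing the collar constant in the definition of $I_k^*$ large enough (relative to the fixed geometric constants) one may arrange $d(z,x)\ge 2|z|$, after which the triangle inequality $d(z,x)=|z^{-1}x|\le|z|+|x|$ forces $|x|\ge|z|$, i.e. $|z|\lesssim|x|$. The hard part will be the second step — pinning down the geometric fact $\textnormal{diam}(I_k)\lesssim\textnormal{dist}(\partial I_k,e_G)$, uniformly in $k$, from the explicit Hebisch-type construction of the $I_j$ and of the enlargements $I_j^*$; this is the only place where \eqref{eGasump} is genuinely needed, and once it is in hand the curve-crossing estimate of the first step and the triangle-inequality bookkeeping of the last step are routine.
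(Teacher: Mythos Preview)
Your overall architecture (curve-crossing lower bound for $d(z,x)$, an upper bound for $|z|$, then a triangle-inequality finish) is sound, but the second step contains a genuine error: the Whitney-type inequality $\textnormal{diam}(\overline{I_k})\asymp R_k\lesssim\textnormal{dist}(\partial I_k,e_G)$ that you invoke is \emph{backwards}. From $|I_k^*|=K|I_k|$ (so that $I_k^*$ is a fixed-factor dilation of $I_k$) one gets $\textnormal{dist}(\partial I_k,\partial I_k^*)\asymp R_k$; combining this with the collar hypothesis $\textnormal{dist}(\partial I_k^*,\partial I_k)\ge 4c\,\textnormal{dist}(\partial I_k,e_G)$ yields
\[
\textnormal{dist}(\partial I_k,e_G)\ \lesssim\ R_k\ \asymp\ \textnormal{diam}(I_k),
\]
not the reverse. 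Your ``joint consistency'' reading therefore gives the opposite inequality to the one you use. Concretely, if $e_G$ lies very close to $\partial I_k$ inside $I_k$, then $\textnormal{dist}(\partial I_k,e_G)$ is arbitrarily small while $\textnormal{diam}(I_k)\asymp R_k$ is fixed, so your asserted bound $|z|\le\textnormal{diam}(\overline{I_k})\lesssim\textnormal{dist}(\partial I_k,e_G)$ fails.

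The paper exploits the collar condition in the correct direction. Its first step bounds $|z|$ by the larger quantity $\textnormal{dist}(\partial I_k,\partial I_k^*)$: choosing $w\in\partial I_k$ closest to $e_G$,
\[
|z|\le d(z,w)+d(w,e_G)\le \textnormal{diam}(\overline{I_k})+\textnormal{dist}(\partial I_k,e_G)\le \textnormal{diam}(\overline{I_k})+\tfrac{1}{4c}\,\textnormal{dist}(\partial I_k,\partial I_k^*)\asymp \textnormal{dist}(\partial I_k,\partial I_k^*),
\]
using $\textnormal{diam}(I_k)\asymp R_k\asymp\textnormal{dist}(\partial I_k,\partial I_k^*)$. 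Your own curve-crossing estimate already gives $d(z,x)\gtrsim\textnormal{dist}(\partial I_k,\partial I_k^*)$; stopping there (rather than descending further to $4c\,\textnormal{dist}(\partial I_k,e_G)$) and coupling with the bound above yields $d(z,x)\gtrsim|z|$, after which your triangle-inequality endgame goes through. In short, the fix is to compare both $|z|$ and $d(z,x)$ to $\textnormal{dist}(\partial I_k,\partial I_k^*)\asymp R_k$, exactly as the paper does, rather than to $\textnormal{dist}(\partial I_k,e_G)$.
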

     We will postpone the proof of Lemma \ref{distanceboundaries} for a moment to  continue with the proof of Lemma \ref{Lemma1}, in order to  use  the estimate $  4c|z|\leq |x| ,$ for $ x\in G\setminus \cup_jI_j^*,$     $z\in {I_k},$ together with   \eqref{3.8} in Lemma \ref{lemmaofauxiliaruse} for $\ell\geq 0,$ as follows, 
     \begin{align*}
        M_k&:= \sup_{z\in I_k}\sum_{\ell=0}^\infty\int\limits_{ G\setminus  \cup_j I_j^*}  |\kappa_{\ell}(z^{-1}x)  - \kappa_{\ell}(x)|dx\lesssim \sup_{z\in I_k}\sum_{\ell=0}^\infty\int\limits_{ |x|\geq 4c|z|}  |\kappa_{\ell}(z^{-1}x)  - \kappa_{\ell}(x)|dx\\
        &\lesssim \sum_{\ell=0}^\infty  2^{-\ell(s-\frac{n}{2})}\Vert\sigma \Vert_{l.u.,\, L_{s}^2(\widehat{G})}'.
     \end{align*}The estimate above implies  that 
     \begin{equation}\label{fundamentalbound}
         M_k:= \sup_{z\in I_k}\sum_{\ell=0}^\infty\int\limits_{ G\setminus  \cup_j I_j^*}  |\kappa_{\ell}(z^{-1}x)  - \kappa_{\ell}(x)|dx\lesssim \Vert\sigma \Vert_{l.u.,\, L_{s}^2(\widehat{G})}'.
     \end{equation}
      So, we have  \eqref{weak(1,1)inequalityWell} for $f\in  L^1(G)$ with $f\geq 0.$
     Note that if $f\in  L^1(G)$ is real-valued, one can decompose $f=f^+-f^-,$ as the difference of two non-negative functions, where $f^+,f^-\in  L^1(G), $ and $|f|=f^++f^{-}.$ Because $f^+,f^-\leq |f|,$ we have
     \begin{align*}
   \left|\left\{x\in G:|W_\ell f(x)|  >t \right\}\right|
    &\leq  \left|\left\{x\in G:|W_\ell f_+(x)|  >\frac{t}{2} \right\}\right| +\left|\left\{x\in G:|W_\ell f_{-}(x)|  >\frac{t}{2} \right\}\right|\\
    &\leq  \frac{C}{t}\Vert f_+\Vert_{L^1(G)}+\frac{C}{t}\Vert f_{-}\Vert_{L^1(G)}\\
    &\leq  \frac{2C}{t}\Vert f\Vert_{L^1(G)}.
\end{align*}A similar analysis, by splitting a complex function $f\in   L^1(G)$ into its real and imaginary parts allows to conclude the weak (1,1) inequality \eqref{weak(1,1)inequalityWell} to complex functions.
     Thus, the proof of Lemma \ref{Lemma1} would be complete if we prove Lemma \ref{distanceboundaries}. That lemma was proved in \cite[Page 17]{CardonaRuzhanskyTriebelGraded} for graded Lie groups and the same proof applies for any amenable group (see Hebish \cite{Hebish}),    we will  present the proof here for the case of a compact Lie group $G$ for completeness.
\end{proof}
\begin{proof}[Proof of Lemma \ref{distanceboundaries}]
In view of the estimate $\textnormal{dist}(\partial I_{j}^*,\partial I_{j})\geq 4c\textnormal{dist}(\partial I_{j},e_{G}),$ we will prove that for $x\in G\setminus \cup_jI_j^*,$   and all $z\in I_{k},$ $4c|z|=4c\times \textnormal{dist}(z,e_G)\lesssim  \textnormal{dist}(\partial I_{k}^*,\partial I_{k})  \leq|x|.$  Indeed, fix $\varepsilon>0,$ and let us take 
     $w\in \partial I_{k},$ and $w'\in \partial I_{k}^*$ such that $d(w,w')\leq \textnormal{dist}(\partial I_{k},\partial I_{k}^*)+\varepsilon. $ Then, from the triangle inequality, we have
    \begin{equation}\label{Estimateforboundaries} 
\begin{array}{l}
        d(z,e_G)\\
        \leq d(z,w)+d(w,w')+d(w',e_G)
         \leq \textnormal{diam}(I_k)+\textnormal{dist}(\partial I_{k},\partial I_{k}^*)+\textnormal{dist}(\partial I_k^*,e_G)+\varepsilon\\
         \lesssim \textnormal{diam}(I_k)+\textnormal{dist}(\partial I_{k},\partial I_{k}^*)+\textnormal{dist}(\partial I_{k},e_G)+\varepsilon\\
         \lesssim \textnormal{diam}(I_k)+\textnormal{dist}(\partial I_{k},\partial I_{k}^*)+\frac{1}{4c}\textnormal{dist}(\partial I_{k},\partial I_{k}^*)+\varepsilon\\
         \asymp \textnormal{dist}(\partial I_{k},\partial I_{k}^*)+\varepsilon,
\end{array}
\end{equation}
where in the last line we have assumed that $\textnormal{diam}(I_k)\asymp \textnormal{dist}(\partial I_{k},\partial I_{k}^*),$ (with constants of proportionality independent in $k$) and that $\textnormal{dist}(\partial I_{k},\partial I_{k}^*)$ is proportional to $R_k$ in view of the relation $|I_k^*|=K|I_k|.$ Assuming \eqref{Estimateforboundaries}, one has that for all $\varepsilon>0,$ $ d(z,e_G)\lesssim  \textnormal{dist}(\partial I_{k},\partial I_{k}^*)+\varepsilon,$ which implies that \begin{equation}\label{mainestimateofk}
    d(z,e_G)\lesssim  \textnormal{dist}(\partial I_{k},\partial I_{k}^*).
\end{equation}  To show  that the proportionality constant in \eqref{mainestimateofk} is uniform in $k,$  let us recall the definition of the radii $R_{k}'s$ in \eqref{Rj},
  that  $B(z_k,R_{k})\subset I_k\subset B(z_{k},2R_{k}),$ and that $B(z_k,R_{k}/C)\subset I_{k}^{*}\subset B(z_k, CR_{k})$ for some $C>2$ independent of $k,$ where for any $k,$ $z_k\in I_{k}.$ From this remark observe that:
  \begin{itemize}
      \item The condition  $B(z_k,R_{k})\subset I_k\subset B(z_{k},2R_{k}),$ implies that $2R_k\leq \textnormal{diam}(I_k)\leq 4R_{k}. $
      \item That $B(z_k,R_{k})\subset I_k\subset I_{k}^{*}\subset B(z_{k},CR_{k}),$ implies that $$ \textnormal{dist}(\partial I_{k},\partial I_{k}^*) \leq \textnormal{dist}(\partial  B(z_{k},R_{k}),\partial B(z_{k},CR_{k}))=(C-1)R_{k}.  $$ On the other hand, by observing that in every step above we can replace $I_{k}^{*}:=B(z_{k},CR_{k}),$ in view of the inclusion $$ I_k\subset B(z_{k},2R_{k}) \subset I_{k}^{*}:= B(z_{k},CR_{k}),$$ we have
      $$ (C-2)R_k=\textnormal{dist}(\partial I_{k}^{*},\partial B(z_{k},2R_{k}) ) \leq \textnormal{dist}(\partial I_{k},\partial I_{k}^*) . $$
  \end{itemize}Consequently,
 \begin{align*}
   \textnormal{diam}(I_k) &\asymp R_{k}\asymp \textnormal{dist}(\partial  B(z_{k},2R_{k}),\partial B(z_{k},CR_{k})) \\
   &\asymp \textnormal{dist}(\partial I_{k},\partial I_{k}^*).
 \end{align*} To show that $\textnormal{dist}(\partial I_{k}^*,\partial I_{k})  \leq|x|,$ observe that from Remark \ref{eGasump}, $e_{G}\in \cup_{j}I_j,$  and because of $x\in G\setminus \cup_{j}I_j,$  $$\textnormal{dist}(\partial I_{k}^*,\partial I_{k})\lesssim \textnormal{diam}(\cup_jI_j) \lesssim d(x,e_G)=|x|. $$
So, we have guaranteed the existence of a positive constant, which we again denote by $c>0,$ such that,  $\{x\in G: x\in G\setminus \cup_jI_j^*\}\subset\{x\in G:\textnormal{ for all } z\in {I_k},\,\,\, 4c|z|\leq |x| \}.$
\end{proof}
\begin{proof}[Proof of Lemma \ref{Lemma2}]
Now, we claim that 
\begin{equation}\label{weakvectorvalued}
    W:L^{1}(G,\ell^r(\mathbb{N}_0))\rightarrow L^{1,\infty}(G,\ell^r(\mathbb{N}_0)),\,\,\,1<r<\infty.
\end{equation}
extends to a bounded operator. For the proof of \eqref{weakvectorvalued}, we need to show that there exists a constant $C>0$ independent of $\{f_\ell\}\in L^1(G,\ell^r(\mathbb{N}_0))$ and $t>0,$ such that 
\begin{equation}
    \left|\left\{x\in G:\left(\sum_{\ell=0}^\infty|W_\ell f_\ell(x)|^r   \right)^{\frac{1}{r}}>t \right\}\right|\leq \frac{C}{t}\Vert \{f_\ell\} \Vert_{L^1(G,\ell^r(\mathbb{N}_0)}.
\end{equation} So, fix $\{f_\ell\}\in L^1(G,\ell^r(\mathbb{N}_0))$ and $t>0,$ and let $h(x):= \left(\sum_{\ell=0}^\infty| f_\ell(x)|^r   \right)^{\frac{1}{r}}.$ Apply the Calder\'on-Zygmund decomposition Lemma to $h\in L^1(G)$ in Remark \ref{CZdecompositioncompact}  in order to obtain a disjoint collection $\{I_j\}_{j=0}^{\infty}$ of disjoint open sets such that
\begin{itemize}
    \item $h(x)\leq t,$ for $a.e.$ $x\in G\setminus \cup_{j\geq 0}I_j,$\\
    
    \item $\sum_{j\geq 0}|I_j|\leq \frac{C}{t}\Vert h\Vert_{L^1(G)},$ and\\ 
    
    \item $t\leq \frac{1}{|I_j|}\int_{I_j}h(x)dx\leq 2t,$ for all $j.$
\end{itemize} Now, we will define a suitable decomposition of $f_\ell,$ for every $\ell\geq 0.$ Recall that   every $I_j$ is bounded, and that  $I_j\subset B(z_j,2R_j),$ where $z_j\in I_j$ (see Remark \ref{CZdecompositioncompact}).  Let us define, for every $\ell,$ and $x\in I_j,$
\begin{equation}
    g_\ell(x):=\frac{1}{|I_j|}\int\limits_{I_j}f_\ell(y)dy,\,\,\,b_{\ell}(x)=f_\ell(x)-g_{\ell}(x).
\end{equation} and for $x\in G\setminus\cup_{j\geq 0}I_j,$
\begin{equation}
    g_\ell(x)=f_\ell(x),\,\,\, b_\ell(x)=0.
\end{equation} So, for a.e.  $x\in G,$ $f_\ell(x)=g_\ell(x)+b_\ell(x).$
Note that for every $1<r<\infty,$ $\Vert \{g_\ell\}\Vert_{L^r(\ell^r)}^r\leq t^{r-1}\Vert \{f_\ell\} \Vert_{L^1(\ell^r)}. $ Indeed, for $x\in I_j$, Minkowski integral inequality gives,
\begin{align*}
    \left(\sum_{\ell=0}^\infty|g_\ell(x)|^r\right)^{\frac{1}{r}} &\leq  \left(\sum_{\ell=0}^\infty\left|   \frac{1}{|I_j|}\int\limits_{I_j}f_\ell(y)dy \right|^r\right)^{\frac{1}{r}}\leq \frac{1}{|I_j|}\int\limits_{I_j}\left(\sum_{\ell=0}^\infty|   f_\ell(y) |^r\right)^{\frac{1}{r}}dy\\
    &=\frac{1}{|I_j|}\int\limits_{I_j}h(y)dy\\
    &\leq 2t.
\end{align*}
Consequently,
\begin{align*}
    \sum_{\ell=0}^\infty|g_\ell(x)|^r\leq (2t)^{r},
\end{align*}
and from the fact that $h(x)\leq t,$ for $a.e.$ $x\in G\setminus \cup_{j\geq 0}I_j,$ we have
\begin{align*}
    \Vert \{g_\ell\}\Vert_{L^r(\ell^r)}^r&=\int\limits_{G}\sum_{\ell=0}^\infty|g_\ell(x)|^rdx=\sum_{j}\int\limits_{I_j}\sum_{\ell=0}^\infty|g_\ell(x)|^rdx+\int\limits_{G\setminus \cup_{j}I_j}\sum_{\ell=0}^\infty|g_\ell(x)|^rdx\\
    &=\sum_{j}\int\limits_{I_j}\sum_{\ell=0}^\infty|g_\ell(x)|^rdx+\int\limits_{G\setminus \cup_{j}I_j}\sum_{\ell=0}^\infty|f_\ell(x)|^rdx\\
    &\leq \sum_{j}\int\limits_{I_j}(2t)^{r}dx+\int\limits_{G\setminus \cup_{j}I_j}h(x)^rdx\\
    &\lesssim t^{r}\sum_{j}|I_j|+\int\limits_{G\setminus \cup_{j}I_j}h(x)^{r-1}h(x)dx\\
    &\leq t^{r}\times \frac{C}{t}\Vert h\Vert_{L^1(G)}+t^{r-1}\int\limits_{G\setminus \cup_{j}I_j}h(x)dx\lesssim t^{r-1}\Vert h\Vert_{L^1(G)}\\
    &=t^{r-1}\Vert \{f_\ell\} \Vert_{L^1(\ell^r)}.
\end{align*}
Now, by using   the Minkowski and the Chebyshev inequality, we obtain 
\begin{align*}
    & \left|\left\{x\in G:\left(\sum_{\ell=0}^\infty|W_{\ell} f_\ell(x)|^r   \right)^{\frac{1}{r}}>t \right\}\right|\\
    &\leq  \left|\left\{x\in G:\left(\sum_{\ell=0}^\infty|W_{\ell} g_\ell(x)|^r   \right)^{\frac{1}{r}}>\frac{t}{2} \right\}\right|+ \left|\left\{x\in G:\left(\sum_{\ell=0}^\infty|W_{\ell} b_\ell(x)|^r   \right)^{\frac{1}{r}}>\frac{t}{2} \right\}\right|\\
    &\leq \frac{2^r}{t^r}\int\limits_{G}\sum_{\ell=0}^\infty|W_{\ell} g_\ell(x)|^rdx+ \left|\left\{x\in G:\left(\sum_{\ell=0}^\infty|W_{\ell} b_\ell(x)|^r   \right)^{\frac{1}{r}}>\frac{t}{2} \right\}\right|.
\end{align*} In view of Lemma \ref{Lemma1}, $W:L^r(G,\ell^r(\mathbb{N}_0))\rightarrow L^r(G,\ell^r(\mathbb{N}_0)),$ extends to a bounded operator and 
\begin{equation}
    \int\limits_{G}\sum_{\ell=0}^\infty|W_{\ell} g_\ell(x)|^rdx=\Vert W\{g_\ell\}\Vert_{L^r(\ell^r)}^r\lesssim  \Vert \{g_\ell\}\Vert_{L^r(\ell^r)}^r\leq t^{r-1}\Vert \{f_\ell\}\Vert_{L^1(\ell^r)}.
\end{equation}
Consequently,
 \begin{align*}
    & \left|\left\{x\in G:\left(\sum_{\ell=0}^\infty|W_{\ell} f_\ell(x)|^r   \right)^{\frac{1}{r}}>t \right\}\right|\\
    &\lesssim  \frac{1}{t}\Vert \{f_\ell\}\Vert_{L^1(\ell^r)}+ \left|\left\{x\in G:\left(\sum_{\ell=0}^\infty|W_{\ell} b_\ell(x)|^r   \right)^{\frac{1}{r}}>\frac{t}{2} \right\}\right|.
\end{align*}
Now, we only need to prove that
\begin{equation}
     \left|\left\{x\in G:\left(\sum_{\ell=0}^\infty|W_{\ell} b_\ell(x)|^r   \right)^{\frac{1}{r}}>\frac{t}{2} \right\}\right|\lesssim  \frac{1}{t}\Vert \{f_\ell\}\Vert_{L^1(\ell^r)}.   
\end{equation} 
Taking into account that $b_\ell\equiv 0$ on $G\setminus \cup_j I_j,$ we have that
\begin{equation}
    b_\ell=\sum_{k}b_{\ell,k},\,\,\,b_{\ell,k}(x)=b_{\ell}(x)\cdot 1_{I_k}(x).
\end{equation} Let us assume that $I_{j}^*$ is a open set, such that $|I_{j}^*|=K|I_{j}|$ for some $K>0,$ and $\textnormal{dist}(\partial I_{j}^*,\partial I_{j})\geq c\textnormal{dist}(\partial I_{j},e_{G}),$  where $e_{G}$ is the identity element of $G$.  So, by the Minkowski inequality we have,
\begin{align*}
     & \left|\left\{x\in G:\left(\sum_{\ell=0}^\infty|W_{\ell} b_{\ell}(x)|^r   \right)^{\frac{1}{r}}>\frac{t}{2} \right\}\right|\\
      &=\left|\left\{x\in \cup_j I_j^*:\left(\sum_{\ell=0}^\infty|W_{\ell} b_\ell(x)|^r   \right)^{\frac{1}{r}}>\frac{t}{2} \right\}\right|+\left|\left\{x\in G\setminus  \cup_j I_j^*:\left(\sum_{\ell=0}^\infty|W_{\ell} b_\ell(x)|^r   \right)^{\frac{1}{r}}>\frac{t}{2} \right\}\right|\\
      &\leq \left|\left\{x\in G:x\in \cup_j I_j^* \right\}\right|+\left|\left\{x\in G\setminus  \cup_j I_j^*:\left(\sum_{\ell=0}^\infty|W_{\ell} b_\ell(x)|^r   \right)^{\frac{1}{r}}>\frac{t}{2} \right\}\right|.
      \end{align*} Taking into account that $$  \left|\left\{x\in G:x\in \cup_j I_j^* \right\}\right| \leq \sum_{j}|I_j^*|,  $$ we get
      \begin{align*}  & \left|\left\{x\in G:\left(\sum_{\ell=0}^\infty|W_{\ell} b_\ell(x)|^2   \right)^{\frac{1}{2}}>\frac{t}{2} \right\}\right|\\
      &\leq\sum_{j}|I_j^*|+\left|\left\{x\in G\setminus  \cup_j I_j^*:\left(\sum_{\ell=0}^\infty|W_{\ell} b_\ell(x)|^2   \right)^{\frac{1}{2}}>\frac{t}{2} \right\}\right|\\
      &=K\sum_{j}|I_j|+\left|\left\{x\in G\setminus  \cup_j I_j^*:\left(\sum_{\ell=0}^\infty|W_{\ell} b_\ell(x)|^2   \right)^{\frac{1}{2}}>\frac{t}{2} \right\}\right|\\
      &\leq \frac{CK}{t}\Vert f\Vert_{L^1(G,\ell^r)}+\left|\left\{x\in G\setminus  \cup_j I_j^*:\left(\sum_{\ell=0}^\infty|W_{\ell} b_\ell(x)|^2   \right)^{\frac{1}{2}}>\frac{t}{2} \right\}\right|.
  \end{align*} Observe that  the Chebyshev inequality implies
  \begin{align*}
     & \left|\left\{x\in G\setminus  \cup_j I_j^*:\left(\sum_{\ell=0}^\infty|W_{\ell} b_\ell(x)|^r  \right)^{\frac{1}{r}}>\frac{t}{2} \right\}\right|\\
     &\leq\frac{2}{t}\int\limits_{ G\setminus  \cup_j I_j^*} \left(\sum_{\ell=0}^\infty|W_{\ell} b_\ell(x)|^r   \right)^{\frac{1}{r}}dx\\
     &=\frac{2}{t}\int\limits_{ G\setminus  \cup_j I_j^*} \left(\sum_{\ell=0}^\infty\left|\left(W_{\ell}\left(\sum_{k} b_{\ell,k}\right)   \right)(x)\right|^r\right)^{\frac{1}{r}}dx\\
     &=\frac{2}{t}\int\limits_{ G\setminus  \cup_j I_j^*} \Vert\{(W_{\ell}(\sum_{k} b_{\ell,k}) (x)\}_{\ell=0}^\infty\Vert_{\ell^r(\mathbb{N}_0)} dx\\
     &=\frac{2}{t}\int\limits_{ G\setminus  \cup_j I_j^*} \Vert\{\sum_{k} (W_{\ell}b_{\ell,k})(x) \}_{\ell=0}^\infty\Vert_{\ell^r(\mathbb{N}_0)} dx\\
     &\leq \frac{2}{t}\sum_{k}\int\limits_{ G\setminus  \cup_j I_j^*} \left(\sum_{\ell=0}^\infty\left|\left(W_{\ell}b_{\ell,k}   \right)(x)\right|^r\right)^{\frac{1}{r}}dx.
  \end{align*}
Now, if $\kappa_\ell$ is the right convolution Calder\'on-Zygmund kernel of     $W_{\ell} ,$  and by using that $\int_{I_k}b_{k,\ell}(y)dy=0,$ we have that
\begin{align*}
    \left(\sum_{\ell=0}^\infty\left|\left(W_{\ell}b_{\ell,k}   \right)(x)\right|^r\right)^{\frac{1}{r}}&=\left(\sum_{\ell=0}^\infty\left|b_{\ell,k}\ast \kappa_{\ell}(x)   \right|^r\right)^{\frac{1}{r}}\\
    &=\left(\sum_{\ell=0}^\infty\left| \int\limits_{I_k}\kappa_\ell(y^{-1}x)b_{\ell,k}(y)dy-\kappa_{\ell}(x)\int\limits_{I_k}b_{\ell,k}(y)dy \right|^r\right)^{\frac{1}{r}}\\
       &=\left(\sum_{\ell=0}^\infty\left| \int\limits_{I_k}(\kappa_\ell(y^{-1}x)-\kappa_{\ell}(x))b_{\ell,k}(y)dy \right|^r\right)^{\frac{1}{r}}.
\end{align*} Now, we will proceed as follows. By using that
$
    |b_{\ell,k}(y)|^r\leq \sum_{\ell'=0}^{\infty}|b_{\ell',k}(y)|^r,
$ by an application of the Minkowski integral inequality, we have
\begin{align*}
    &\left(\sum_{\ell=0}^\infty\left|\left(W_{\ell}b_{\ell,k}   \right)(x)\right|^r\right)^{\frac{1}{r}}= \left(\sum_{\ell=0}^\infty\left| \int\limits_{I_k}(\kappa_\ell(y^{-1}x)-\kappa_{\ell}(x))b_{\ell,k}(y)dy \right|^r\right)^{\frac{1}{r}}\\
    &\leq \int\limits_{I_k}\left(   \sum_{\ell=0}^\infty |\kappa_\ell(y^{-1}x)-\kappa_{\ell}(x)|^r|b_{\ell,k}(y)|^r\right)^{\frac{1}{r}}dy\\
    &\leq  \int\limits_{I_k}\left(   \sum_{\ell'=0}^{\infty}|b_{\ell',k}(y)|^r\right)^{\frac{1}{r}}\left(   \sum_{\ell=0}^\infty |\kappa_\ell(xy^{-1})-\kappa_{\ell}(x)|^r\right)^{\frac{1}{r}}dy.
\end{align*} 
Consequently, we deduce,
\begin{align*}
   & \frac{2}{t}\sum_{k}\int\limits_{ G\setminus  \cup_j I_j^*} \left(\sum_{\ell=0}^\infty\left|\left(W_{\ell}b_{\ell,k}   \right)(x)\right|^r\right)^{\frac{1}{r}}dx\\
   &\leq \frac{2}{t} \sum_{k}\int\limits_{ G\setminus  \cup_j I_j^*} \int\limits_{I_k}\left(   \sum_{\ell'=0}^{\infty}|b_{\ell',k}(y)|^r\right)^{\frac{1}{r}}\left(   \sum_{\ell=0}^\infty |\kappa_\ell(y^{-1}x)-\kappa_{\ell}(x)|^r\right)^{\frac{1}{r}}dy                   dx\\
   &= \frac{2}{t} \sum_{k} \int\limits_{I_k}  \int\limits_{ G\setminus  \cup_j I_j^*} \left(   \sum_{\ell'=0}^{\infty}|b_{\ell',k}(y)|^r\right)^{\frac{1}{r}}\left(   \sum_{\ell=0}^\infty |\kappa_\ell(y^{-1}x)-\kappa_{\ell}(x)|^r\right)^{\frac{1}{r}}dx                  dy\\
    &= \frac{2}{t} \sum_{k} \int\limits_{I_k}  \left(   \sum_{\ell'=0}^{\infty}|b_{\ell',k}(y)|^r\right)^{\frac{1}{r}}  \int\limits_{ G\setminus  \cup_j I_j^*} \left(   \sum_{\ell=0}^\infty |\kappa_\ell(y^{-1}x)-\kappa_{\ell}(x)|^r\right)^{\frac{1}{r}}dxdy.
\end{align*}
Let us recall that for $x\in G\setminus \cup_jI_j^*,$  and  $y\in I_{k},$ in view of Lemma \ref{distanceboundaries}, we have that $4c|y|  \leq|x|.$ So,  $$\{x\in G: x\in G\setminus \cup_jI_j^*\}\subset\{x\in G:\textnormal{ for all } z\in {I_k},\,\,\, 4c|z|\leq |x| \}.$$ Now, in view of  \eqref{3.8} in Lemma \ref{lemmaofauxiliaruse} with $\ell\geq 0,$ we deduce \eqref{fundamentalbound} and as a consequence we get 
\begin{align*}
  & \int\limits_{ G\setminus  \cup_j I_j^*} \left(   \sum_{\ell=0}^\infty |\kappa_\ell(y^{-1}x)-\kappa_{\ell}(x)|^r\right)^{\frac{1}{r}}dx \leq \int\limits_{ G\setminus  \cup_j I_j^*} \sum_{\ell=0}^\infty |\kappa_\ell(y^{-1}x)-\kappa_{\ell}(x)|dx\\
   &\leq  \sum_{\ell=0}^\infty \int\limits_{ G\setminus  \cup_j I_j^*}|\kappa_\ell(y^{-1}x)-\kappa_{\ell}(x)|dx\\\
   &\leq   \sum_{\ell=0}^\infty \int\limits_{|x| >4c|y|}|\kappa_\ell( y^{-1}x)-\kappa_{\ell}( x)|dx\lesssim \Vert\sigma \Vert_{l.u.,\, L_{s}^2(\widehat{G})}'.
\end{align*}Thus, we have proved the estimate
\begin{align*}
      &\left|\left\{x\in G:\left(\sum_{\ell=0}^\infty|W_{\ell} b_\ell(x)|^r   \right)^{\frac{1}{r}}>\frac{t}{2} \right\}\right|\lesssim  \frac{2}{t} \sum_{k} \int\limits_{I_k}  \left(   \sum_{\ell'=0}^{\infty}|b_{\ell',k}(y)|^r\right)^{\frac{1}{r}}dy \\
      &=\frac{2}{t}  \int\limits_{\cup_{k}I_{k}}  \left(   \sum_{\ell'=0}^{\infty}|b_{\ell'}(y)|^r\right)^{\frac{1}{r}}dy \\
      &\lesssim \frac{1}{t}\Vert \{f_\ell\}\Vert_{L^1(\ell^r)}. 
\end{align*} Thus, the proof of the weak (1,1) inequality is complete and we have that \begin{equation}\label{weakvectorvalued2}
    W:L^{1}(G,\ell^r(\mathbb{N}_0))\rightarrow L^{1,\infty}(G,\ell^r(\mathbb{N}_0)),\,\,\,1<r<\infty,
\end{equation} admits a bounded extension. The proof of Lemma \ref{Lemma2} is complete.
\end{proof}
Now, having proved Lemma \ref{Lemma1} and Lemma \ref{Lemma2},
 the duality argument in Remark \ref{RemarkInterpolation} proves the H\"ormander-Mihlin Theorem \ref{CardonaRuzhansky}. 
\end{proof}
\begin{remark}\label{finalreamrk}Let us return to the historical  Marcinkiewicz  condition on the torus $\mathbb{T},$
\begin{equation}\label{Marc12}
\sup_{\xi\in\mathbb{Z}}|\sigma(\xi) |+ \sup_{j\in\mathbb{N}_0}\sum_{2^{j-1}\leq |\xi|<2^{j}}|\sigma(\xi+1)-\sigma(\xi)|<\infty.
\end{equation}Let $s_0\in \mathbb{N}.$ For a general compact Lie group $G$ of dimension $n,$ one says that a Fourier multiplier $A\equiv T_\sigma$ satisfies the (weak) Marcinkiewicz condition of order $s_0,$ if its symbol $\sigma$ satisfies:
\begin{equation}\label{MarCond}
  \Vert\sigma\Vert_{L^{\infty}(\widehat{G})}+  \left\Vert \sigma \cdot 1_{\{[\xi]\in \widehat{G}: \,2^{j-1}\leq \langle\xi\rangle < 2^{j}  \}}\right\Vert_{\dot{L}^{1}_{s_0}(\widehat{G})}\lesssim_{s_0} 2^{j(n-s_0)},\,j\geq 0,
\end{equation} uniformly in $j,$ with the $\dot{L}^{1}_{s_0}(\widehat{G})$-norm defined via:
$$ \Vert \tau\Vert_{\dot{L}^{1}_{s_0}(\widehat{G})}:=\sum_{|\alpha|=s_0}\sum_{[\xi]\in \widehat{G}}d_\xi\textnormal{Tr}|\Delta^{\alpha}_\xi\tau(\xi)|,\,\,|\Delta^{\alpha}_\xi\tau|:=\sqrt{\textnormal{Tr}(\Delta^{\alpha}_\xi\tau(\Delta^{\alpha}_\xi\tau)^{*})}.$$ It was proved in \cite{FischerDiff}, that for $0\leq s_0\leq n,$ a symbol satisfying \eqref{MarCond} also satisfies the H\"ormander-Mihlin condition \eqref{toverifyHM} for any $s>n/2.$ In view of Theorem  \ref{CardonaRuzhansky}, a Fourier multiplier $A\equiv T_{\sigma}$ satisfying  \eqref{MarCond} extends to a bounded operator from  $F^{r}_{p,q}(G)$ into $F^{r}_{p,q}(G)$  for all $1<p,q<\infty,$ and all $r\in \mathbb{R},$ and  for $p=1,$ $A$ admits a bounded extension from $F^{r}_{1,q}(G)$ into $\textrm{weak-}F^{r}_{1,q}(G).$ In  the case of the torus $G=\mathbb{T},$ $1<p,q<\infty,$ and  $n=1=s_0,$ \eqref{MarCond} is just \eqref{Marc12}, and $A\equiv T_{\sigma}$ extends to a bounded operator   from  $F^{r}_{p,q}(\mathbb{T})$ into $F^{r}_{p,q}(\mathbb{T})$  for all $1<p,q<\infty,$ and all $r\in \mathbb{R},$ and  for $p=1,$ $A$ admits a bounded extension from $F^{r}_{1,q}(\mathbb{T})$ into $\textrm{weak-}F^{r}_{1,q}(\mathbb{T}).$ In view of the Littlewood-Paley theorem, for $1<p<\infty,$ $q=2$ and $r=0,$ the previous estimate  recovers the classical Marcinkiewicz estimate \cite{Marc}. We summarise this discussion in the following corollary.
\end{remark}
\begin{corollary}\label{HMTTL22}
Let us assume that $G$ is a compact Lie group of dimension $n.$  Let  $\sigma\in \Sigma(\widehat{G})$ be a  symbol satisfying 
\begin{equation}\label{MarCond2222}
  \Vert\sigma\Vert_{L^{\infty}(\widehat{G})}+  \left\Vert \sigma \cdot 1_{\{[\xi]\in \widehat{G}: \,2^{j-1}\leq \langle\xi\rangle < 2^{j}  \}}\right\Vert_{\dot{L}^{1}_{s_0}(\widehat{G})}\lesssim 2^{j(n-s_0)},\,j\geq 0,
\end{equation} uniformly in $j,$ for some $0\leq s_0\leq n,$ $s_0\in \mathbb{N}.$
Then $A\equiv T_\sigma$ extends to a bounded operator from  $F^{r}_{p,q}(G)$ into $F^{r}_{p,q}(G)$  for all $1<p,q<\infty,$ and all $r\in \mathbb{R}.$ For $p=1,$ $A$ admits a bounded extension from $F^{r}_{1,q}(G)$ into $\textrm{weak-}F^{r}_{1,q}(G).$   
\end{corollary}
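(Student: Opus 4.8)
The plan is to obtain Corollary \ref{HMTTL22} as a consequence of the H\"ormander-Mihlin Theorem \ref{CardonaRuzhansky}, by checking that the Marcinkiewicz-type hypothesis \eqref{MarCond2222} implies the H\"ormander-Mihlin symbol condition \eqref{toverifyHM} for some $s>n/2$. Granting this implication, the corollary is immediate: Theorem \ref{CardonaRuzhansky} then gives that $A\equiv T_\sigma$ extends to a bounded operator on $F^{r}_{p,q}(G)$ for all $1<p,q<\infty$ and all $r\in\mathbb{R}$, and to a bounded operator from $F^{r}_{1,q}(G)$ into $\textrm{weak-}F^{r}_{1,q}(G)$, with norm controlled by $\Vert\sigma \Vert_{l.u.,\, L_{s}^2(\widehat{G})}'$.

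So the real content is the passage from the $\dot{L}^1_{s_0}$-based annular bound \eqref{MarCond2222} to the $\dot{L}^2_s$-based bound \eqref{toverifyHM}; this was established in \cite{FischerDiff}, and I would reproduce the argument as follows. Take $r=2^{j}$ with $j\ge 1$ (the contribution of $j\le 0$ being absorbed by $\Vert\sigma\Vert_{L^\infty(\widehat{G})}$), so that $\eta(r^{-1}\langle\xi\rangle)$ is supported in the dyadic block $\Lambda_j:=\{[\xi]\in\widehat{G}:2^{j-1}\le\langle\xi\rangle<2^{j+1}\}$, which by the Weyl law for $\mathcal{L}_G$ carries Plancherel mass $\sum_{[\xi]\in\Lambda_j}d_\xi^2\asymp 2^{jn}$. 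Using the Leibniz rule for difference operators (Remark \ref{Leibnizrule}) one splits each $\Delta^\alpha(\sigma\cdot\eta(r^{-1}\langle\xi\rangle))$, $|\alpha|\le s$, into a sum of products $(\Delta^\beta\sigma)\,(\Delta^\gamma[\eta(r^{-1}\langle\xi\rangle)])$ with $|\beta|,|\gamma|\le|\alpha|\le|\beta|+|\gamma|$; since $\Delta^\gamma[\eta(r^{-1}\langle\xi\rangle)]$ is $O(2^{-j|\gamma|})$ and supported essentially in $\Lambda_j$, the $\dot{L}^2_s(\widehat{G})$-norm of $\sigma\cdot\eta(r^{-1}\langle\xi\rangle)$ is dominated by $\sum_{|\beta|\le s}2^{-j(s-|\beta|)}$ times an $\ell^2(\Lambda_j)$-type norm of $\Delta^\beta\sigma$. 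One then trades these local $L^2$ quantities for the prescribed $\dot{L}^1_{s_0}$ bound by a Sobolev/H\"older inequality on the discrete set $\Lambda_j$, paying the appropriate power of its mass $2^{jn}$; combining with \eqref{MarCond2222} and choosing, say, $s=[n/2]+1\in\mathbb{N}$ (which is $>n/2$), all powers of $2^j$ cancel and one obtains $r^{s-n/2}\Vert\sigma\cdot\eta(r^{-1}\langle\xi\rangle)\Vert_{\dot{L}^2_s(\widehat{G})}\lesssim 1$ uniformly in $j$. This is precisely \eqref{toverifyHM}.

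I expect the main obstacle to be exactly this dimensional bookkeeping: reconciling the homogeneous order $s_0$ of the differences entering the Marcinkiewicz norm with the integer order $s=[n/2]+1$ needed in the H\"ormander norm, while making the $2^{jn}$ growth of the block $\Lambda_j$ cancel against the gain $2^{j(n-s_0)}$ in \eqref{MarCond2222} and the factor $r^{s-n/2}$ --- this is where the constraint $0\le s_0\le n$ is used and where the careful form of the embedding on $\widehat{G}$ from \cite{FischerDiff} is needed. Everything downstream --- the Calder\'on-Zygmund kernel estimate \eqref{3.8}, the vector-valued weak-type Lemmas \ref{Lemma1} and \ref{Lemma2}, and the reduction from $F^{r}_{p,q}$ to $F^{0}_{p,q}$ via $(1+\mathcal{B})^{\pm r/2}$ --- is already contained in the proof of Theorem \ref{CardonaRuzhansky} and is reused verbatim. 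Finally, I would note that for $G=\mathbb{T}$ with $n=s_0=1$ the hypothesis \eqref{MarCond2222} is literally the classical Marcinkiewicz condition \eqref{Marc12}, so the corollary recovers Marcinkiewicz's 1939 theorem and, since $F^{0}_{p,2}(\mathbb{T})=L^p(\mathbb{T})$, its $L^p$ formulation.
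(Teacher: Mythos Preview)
Your proposal is correct and follows the same route as the paper: the paper's proof simply cites Theorem 6.16 of \cite{FischerDiff} for the implication \eqref{MarCond2222} $\Rightarrow$ \eqref{toverifyHM} (valid for all $s>n/2$) and then invokes Theorem \ref{CardonaRuzhansky}. Your write-up is in fact more detailed than the paper's, since you sketch the mechanism behind the cited implication rather than just quoting it.
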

\begin{proof}
That a symbol satisfying \eqref{MarCond2222} for some integer $s_0$ with $0\leq s_0\leq n,$ satisfies also the H\"ormander condition \eqref{toverifyHM} for all $s>\frac{n}{2}$  was proved in Theorem 6.16 of \cite{FischerDiff}. In view of Theorem \ref{CardonaRuzhansky} we conclude the proof.
\end{proof}

\bibliographystyle{amsplain}

\end{document}